\documentclass[a4paper,pdftex,reqno,10pt]{amsart}

\allowdisplaybreaks[1]

\usepackage{geometry}
\usepackage{natbib}
\let\cite=\citep
\setcitestyle{aysep={}}

\usepackage[norelsize,lined,algoruled,linesnumbered,titlenotnumbered]{algorithm2e} 

\usepackage{graphicx}
\graphicspath{{Figures/}}

\usepackage{booktabs}
\usepackage{xcolor}
\usepackage{eurosym}
\usepackage{mathtools}


\theoremstyle{plain}
\newtheorem{Theorem}{Theorem}[section]

\newtheorem{Lemma}[Theorem]{Lemma}

\theoremstyle{definition}

\theoremstyle{remark}


\newcommand{\abs}[1]{\ensuremath{\lvert #1 \rvert}}
\newcommand{\setof}[2]{\ensuremath{\left\{ #1 \;\colon\; #2 \right\}}}
\newcommand{\confconfidence}[4]{\ensuremath{(#1,#2;#3,#4)}}
\newcommand{\confconviction}[2]{\ensuremath{(#1,#2)}}
\newcommand{\varconfidence}[6]{\ensuremath{v^{#1}_{#2, \confconfidence{#3}{#4}{#5}{#6}}}}
\newcommand{\varconviction}[2]{\ensuremath{p_{\confconviction{#1}{#2}}}}
\newcommand{\vardistleft}[2]{\ensuremath{\lambda^{#1}_{#2}}}
\newcommand{\vardistright}[2]{\ensuremath{\rho^{#1}_{#2}}}
\newcommand{\varvoterconf}[3]{\ensuremath{u^{#1}_{#2,#3}}}
\newcommand{\varcontrolconf}[2]{\ensuremath{s^{#1}_{#2}}}
\newcommand{\rev}[1]{#1}
\newcommand{\revJ}[1]{#1}
\newcommand{\revR}[1]{#1}


\begin{document}


\title{Optimal Opinion Control: The Campaign Problem}

\author[R.~Hegselmann]{Rainer Hegselmann}
\author[S.~K{\"o}nig]{Stefan K{\"o}nig}
\author[S.~Kurz]{Sascha Kurz}
\author[C.~Niemann]{Christoph Niemann}
\author[J.~Rambau]{J\"org Rambau}
%
%

\address{Rainer Hegselmann\\Fakult\"at f\"ur Kulturwissenschaften\\Universit\"at Bayreuth\\Germany}
\email{rainer.hegselmann@uni-bayreuth.de}
\address{Sascha Kurz\\Fakult\"at f\"ur Mathematik, Physik und Informatik\\Universit\"at Bayreuth\\Germany}
\email{sascha.kurz@uni-bayreuth.de}
\address{J\"org Rambau\\Fakult\"at f\"ur Mathematik, Physik und Informatik\\Universit\"at Bayreuth\\Germany}
\email{joerg.rambau@uni-bayreuth.de}
%
\address{Stefan K{\"o}nig\\Fiducia IT AG, Aschheim, Germany}
\email{stefan.koenig@fiducia.de}

\address{Christoph Niemann\\MaibornWolff GmbH, M\"unchen, Germany}
\email{christoph.niemann@maibornwolff.de}
%

\begin{abstract}
  \revR{Opinion dynamics is nowadays a very common field of
    research. In this article we formulate and then study a novel,
    namely strategic perspective on such dynamics: There are the usual
    `normal' agents that update their opinions, for instance according
    the well-known bounded confidence mechanism. But, additionally,
    there is at least one strategic agent. That agent uses opinions as
    freely selectable strategies to get control on the dynamics: The
    strategic agent of our benchmark problem tries, during a campaign
    of a certain length, to influence the ongoing dynamics among
    normal agents with strategically placed opinions (one per period)
    in such a way, that, by the end of the campaign, as much as
    possible normals end up with opinions in a certain interval of the
    opinion space. Structurally, such a problem is an optimal
    control problem. That type of problem is ubiquitous. Resorting to
    advanced \revJ{and partly non-standard} methods for computing
    optimal controls, we solve \revJ{some instances} of the campaign
    problem. But even for a very small number of normal agents, just
    one strategic agent, and a ten-period campaign length, the problem
    turns out to be extremely difficult. Explicitly we discuss
    moral and political concerns that immediately arise, if someone
    starts to analyze the possibilities of an ``optimal opinion
    control''.}
%
\end{abstract}

\maketitle

\section{Introduction}
\label{sec:introduction}

Since about 60 years the dynamics of opinions has been studied. Today
it is a standard topic of general conferences on agent-based
modelling. A bunch of models were defined and analyzed.\footnote{For
  \rev{the} \revR{general history of opinion dynamics} 
  see the introduction and for a partial
  classification see ch.~2 and~3 of the \revR{paper} by
  \citet{Hegselmann:OD-BC:2002}.}  In the last 15 years at least
hundreds and probably more than thousands of simulation studies on the
dynamics of opinions were published.\footnote{There are several
  surveys encompassing various models of opinion dynamics
  \cite{Acemoglu+Ozdaglar:OD-Learning:2011,Castellano+Fortunato+Loreto:SocialDynamics-StatisticalPhysics:2009,Liebrand+Hegselmann:SocialProcessModeling:1998,Stauffer:OD:2005,Xia:2011:OD-ReviewPerspective}. \revR{A microfoundation for the 
    evolution of several opinion dynamics mechanisms is proposed and discussed by
    \citet{Groeber+Lorenz+Schweitzer:DissonanceMinimization:2014}. -- 
    In subsection \ref{subsec:underlying_OD} 
we give specific hints to the related opinion dynamics literature 
that is directly relevant in the context of our present article.}}
The studies and their underlying models differ in many details: The
opinions and the underlying time are continuous or discrete, the
updating of opinions is governed by different updating regimes, the
space of possible opinions may have more than one dimension, the
dynamics may run on this or that type of network, various types of
noise may be involved. But despite of all their differences there is a
commonality in all these studies and their models: The agents
influence mutually their opinions, \revR{\emph{but they do not do that
strategically}}.


What is studied in the huge body of articles is typically focusing on
convergence, dynamical patterns or final structures. Given the
specific parameters of the underlying model, the typical questions
are: Under what conditions does the dynamics stabilize? Does the
dynamics lead to consensus, polarisation, or other interesting types
of clustering? What are the time scales that are involved?  What
remains unasked in these studies are strategic questions like: Which
opinion should an agent pretend to have to drive the whole dynamics in
his or her preferred direction? Where in the opinion space should an
agent `place' an opinion, given that he or she has a certain
preference with regard to the opinions in later periods.  Our article
deals with such strategic questions. We develop a conceptual framework
that allows to answer strategic questions in certain
cases. Additionally, we analyze why it is surprisingly difficult or
impossible to give exact answers to strategic questions even in cases
that look very, very simple.

It is not by accident that strategic questions are normally not raised
in the sort of research that is labeled opinion dynamics. The standard
approach is to describe the dynamics as a more or less complicated
dynamical system: There is a set $I$ of agents
$1, 2, \ldots, i, j, \ldots, n$ and a discrete time
$t = 1, 2, \ldots$. The opinions of the agents are given by an
\revJ{\emph{opinion profile}.  This is a} vector
$\mathbf{x}^t = (x^t_1, \ldots, x^t_n)$ that describes the state of
the system at time~$t$. Even if stated in an informal or semi-formal
way (sufficiently clear to program the process), the dynamics of the
system is basically given by a function $\mathbf{f}^t$ that computes
the state of the system $\mathbf{x}^{t+1}$ as
$\mathbf{x}^{t+1} = \mathbf{f}^t(\mathbf{x}^t)$.

Thus, for each agent $i$ the function $\mathbf{f}^t$ specifies how
$x_i^{t+1}$ depends upon $\mathbf{x}^t$. Depending upon the specific
opinion dynamics model, the vector valued functions $\mathbf{f}^t$
work in very different ways. For the most part they do some kind of
averaging: averaging with a privileged weight for agent $i$'s own
opinion or a weighted averaging with weights $w_{ij}$ that agent $i$
assigns to agents~$j$ and that are meant to express agent~$i$'s
respect for agent~$j$, or some other sort of averaging subject to
constraints, for instance constraints in terms of network distance on
an underlying network on which the opinions dynamics is assumed to
run.

Whatever the `story' about $\mathbf{f}^t$, it is always a reactive
process in which the agents \revR{react on the \emph{last} period $t$. 
In principle the step to $t+1$ might depend upon some more past periods. 
But even then, an answer to the question, where to place an opinion in
order to drive the dynamics in a preferred direction, requires
something very different from looking into the past: It requires} 
\emph{anticipation}, i.e., finding out what the future
effects of placing an opinion here or there in the opinion space
probably are, and then placing it there, where the placement is most
effective to get to a preferred outcome.  

In the following, we assume a setting in which we have two sets of
agents: First, a set of \emph{non-strategic} agents as they are
usually assumed in opinion dynamics. They are driven by the function
$\mathbf{f}^t$ . The function describes a dynamical system in which
the non-strategic agents always reveal their \revR{`true'} actual 
opinion, take the opinions of others as their true actual opinion, and mutually
react on the given opinion profile $\mathbf{x}^t$ according to
$\mathbf{f}^t(\mathbf{x}^t)$. The second set of agents is a set of
\emph{strategic} agents. Whatever their true opinion may actually be,
they can place any opinion strategically in the opinion space where,
then, non-strategic agents take these \rev{opinions} \revR{at their 
face values} and consider them as revealed true opinions of other agents. 
The strategic agents have preferences
over possible opinion profiles
of non-strategic agents. Therefore, they try to place opinions \rev{in
such} a way that the opinion stream that is generated by
$\mathbf{f}^t$ is driven towards the preferred type of profile. 


Our setting has a structure as it is often perceived or even
explicitly `conceptualized' by political or commercial campaigners:
There is an ongoing opinion stream, result of and driven by mutual
exchange of opinions between communicating citizens, consumers,
members of some parliamentary body, etc. That opinion stream has its
own dynamics. However, it is possible to intervene: Using channels of
all sorts (TV, radio, print media, mails, posters, adds, calls,
speeches, personal conversation, etc.)\ one can place opinions in the
opinion space. Done in an intelligent way, these interventions should
drive the opinion stream in the direction of an outcome that is
preferred by the principal who pays a campaign. About that will
often be the self-understanding and selling-point of a campaigning
agency.

The number of strategic agents matters: If there are \emph{two or
  more} strategic agents, the setting becomes a \emph{game theoretical
  context} in which the strategic agents have to take into account
that there are others that try to \revR{influence} the opinion
dynamics \emph{as well}. Therefore, the strategic agents do not only
`play a game' with the non-strategic agents. They play also -- and
that now in an exact game theoretical sense of the word -- a
\emph{game} against each other. It is a complicated game for which in
principle usual solution concepts like the (sub-game perfect) Nash
equilibrium can be applied.  But if there is \emph{just one} strategic
agent, then there are no other competing \revR{players. That turns the
  problem of the strategic agent into the following question: How can
  one place opinions in an ongoing opinion stream (governed
  by~$\mathbf{f}^t$) in such a way that the stream moves as much as
  possible in the direction of one's favorite profile?  \revJ{This
    task means to optimize of decisions over time.  Technically
    speaking, problems of this type are \emph{optimal control
      problems}}. As soon as a mathematical formalization is
  available, there are various mathematical methods to find
  solutions.}

\revR{The topic of this paper is the optimal control problem of a
  strategic agent who tries to influence a certain ongoing opinion
  dynamics. As a benchmark problem for such an agent, we define what
  we call the \emph{campaign problem}: The strategic agent tries to
  control an opinion dynamics, such that in a certain period, known in
  advance, there are as much as possible opinions of normal agents in
  a certain interval of the opinion space.  That covers many types of
  voting or buying campaigns.  We will investigate the campaign
  problem by various methods. Our focus will always be on
  understanding basic features of the problem---and that will be
  difficult enough. Some elementary mathematical proofs will give some
  theoretical insights into structural properties. With models from
  mixed-integer linear programming (MILP) we will try to directly
  solve our central optimization problem.  \revJ{This way we can solve
    some instances of the campaign problem, but -- surprisingly -- not
    all}. Therefore we will additionally attack the control problem by
  heuristic methods\revJ{.  An additional investigation using
    genetic algorithms provides evidence for the fact that `good'
    controls are almost impossible to find by randomized
    exploration.}}

\subsection{Moral and political concerns}
\label{subsec:concerns}
\revR{
Our question and approach immediately raises moral and political
concerns: Over the last years we learnt that there are well equipped agencies 
that (among other things) aim at a more or less complete supervision of 
private and public opinions. Isn't the title
of our paper at least a partial confession, that now, 
as a kind of additional thread, basic research on 
strategies for an efficient  
manipulation of public opinion formation is put on the agenda, isn't 
our `control terminology' a tell-tale language?}

\revR{This is a very serious question and a very serious concern. An
  answer requires careful consideration of least \revJ{six} aspects:
  \revJ{\emph{First}, the only means of opinion `control' is
    publically stating an opinion.}  \emph{Second}, campaigning can be
  done for both, good and bad purposes: Information, enlightenment,
  spreading the truth, \emph{or} desinformation, confusion, spreading
  liars.  \emph{Third}, whoever plans a campaign for a good purpose
  will immediately get into serious optimization problems. For an
  example, let's assume that measles vaccination is basically a good
  thing.  From a public health point of view, a certain minimum
  vaccination rate is necessary and a certain upper rate sufficient.
  In the US, Germany, and other countries the actual vaccination rates
  are too low.  Therefore public health institutions design pro
  vaccination campaigns.  In doing so, important questions are: On
  which parts of the networks of vaccination skeptics and enemies
  should one concentrate to what degree, or in what sequence? What
  might convince which parts of the networks to change their opinions?
  Which opinion changes can be induced in the network? -- All these
  questions are questions about an optimal campaign, given one's
  constraints (in terms of budget, time, channels, arguments, their
  effects on whom, and chances to confront people with them etc.).
  \emph{Fourth}, our control terminology is the usual terminology in
  the disciplines, theories, and tool box approaches that can be used
  to solve optimization problems of all sorts.  Our campaign problem
  is just an instance.  The spirit of these approaches is technical:
  One has some constrained means to influence something to some degree
  in a certain, more or less attractive direction. What, then, is an
  optimal use of one's means?---that is the guiding question.  The
  question is neutral as to ends, except for optimality itself (and
  some more formal consistency requirements).  The possibility of a
  `dual use' is inherent to such approaches.  \emph{Fifth}, in what
  follows our strategic agents place their opinions independent of any
  truth considerations. They try to influence the ongoing opinion
  dynamics in their favorite direction by placing an opinion here or
  there in the opinion space.  Only the effects matter. Therefore, one
  might say, our strategic agents are completely opportunistic.  Under
  a less pejorative description we might consider them as perfect
  diplomats that know how to overcome entrenched opinions.  If we
  consider diplomacy or opportunism as morally unacceptable, then we
  could formulate a corresponding \emph{moral} constraint: We could
  require to place only opinions that are close to one's true actual
  opinion, and we specify `close' by a certain threshold for the
  maximal acceptable distance of placed opinions to one's own actual
  opinion.  Probably nobody would advocate a threshold of
  zero.\footnote{ \revJ{Imagine, I use a hypothetical argument that I
      myself do not share.  Nevertheless, in other persons' opinions
      this may induce a change in the direction of what I consider the
      truth.  What is morally wrong with such a discussion strategy?}}
  But, then, one should also recognize, that for any such
  \emph{non}-zero-threshold, there still exists the control problem of
  how to design an optimal campaign (though with an additional
  constraint).  \emph{Finally}, dual use concerns about an optimal
  control approach to opinion dynamics are justified, of course. The
  existence of professional disinformation agencies is not an
  invention of conspiracy theorists.  They are at work, they have
  their \emph{secret} expertise -- and, in all likelihood, they
  learn. But exactly because of that there has to be \emph{public}
  knowledge about what can be done in terms of optimal campaigning --
  and that for both, good \emph{and} bad purposes.  The first type of
  knowledge is supportive, the second protective.  Helpful are both.}


\subsection{Our benchmark: The campaign problem}
\label{subsec:benchmark}
To specify the campaign problem  we add
\emph{just one} strategic agent$_0$ to the set of agents. Agent$_0$ is
equipped with the ability to freely choose in any time step what other
agents then perceive as his or her opinion. We call agent$_0$ the
\emph{controller} and his freely chosen opinion the \emph{control}.
Mathematically, this makes the development of the opinion system
dependent on exogenously chosen parameters, namely the control
opinion, and we are faced with a \emph{control system}. If we define
what the controller wants to achieve, we can formulate an
\emph{optimal control problem}, for which the controller tries to find
the controls to get there.  

Our optimal control problem is of a \emph{specific -- seemingly simple
  -- type}: Agent$_0$ can strategically place opinions in a finite
sequence of periods, one and only one opinion per period. There is the
ongoing and underlying opinion dynamics, given by a function
$\mathbf{f}^t$. Period by period agent$_0$ tries to do the placement
of an opinion in such a way that finally in a certain future time
step~$N$ (the \emph{horizon}), known in advance, something is
maximized: the number of normal agents' opinions that are in a certain
part of the opinion space that ex ante was specified by agent$_0$. To
keep it simple, we assume as a special one-dimensional opinion space
the real-valued unit interval $[0,1]$.  As the part of the opinion
space preferred by the controller, we assume a \emph{target interval}
$[\ell, r] \subseteq [0, 1]$ for some prescribed $\ell < r$ in $[0,
1]$ known to the controller.

Both assumptions are much less restrictive than they seem to be:
First, the unit interval can be used to represent opinions about, for
example, tax rates, minimal wages, maximum salaries, political
positions on a left-right spectrum, product quality, or any property
whatsoever that can be expressed by real-valued numbers. If -- and
often that will be the case -- the `really' possible range of
numerical values is different from the unit interval, then some
transformation, approximation, or range regarding guess work is
necessary. But that is an easy and widely accepted step (at least in
principle). Second, suppose there are $m$ fixed alternatives
$\mathbf{a} = (a_1, a_2, \ldots, a_m) \in [0, 1]^m$, sorted such that
$a_1 \le a_2 \le \ldots \le a_m$. Further suppose, our $n$ normal agents
have to choose among the alternatives at the future time step~$N$ and
and will do that by choosing an alternative that is next to their own
opinion in that time step. What, then, is the problem of a controller
with the interest to make as much normal agents as possible choosing a
certain alternative~$a_j$? Obviously the problem is to maximize the
number of agents' opinions that in time step~$N$ are within a certain
interval to the left and to the right of the favored
alternative~$a_j$. The exact bounds of that interval depend upon the
exact positions of the two nearest alternatives~$a_{j-1}$ to the left
and~$a_{j+1}$ to the right of the favored~$a_j$. The exact left and
rights bounds are then $\tfrac{a_{j-1} + a_j}{2}$ and $\tfrac{a_j +
  a_{j+1}}{2}$ respectively.

Therefore, whatever the vector of alternatives may be (e.g., positions
of parties or candidates on a left/right scale, a price or a technical
specification of a certain type of product), whenever there are voting
or buying decisions\footnote{It may even be a buying decision in the
  metaphorical sense of whether or not to `buy' a certain
  assumption: Imagine a committee that after some discussion has to
  decide whether to proceed based on this or that assumption in a
  vector~$\mathbf{a}$ of alternatives.} after a foregoing opinion
dynamics (e.g., about the appropriate position in the political
spectrum, the acceptable price or a desirable technical specification
of some product), our controller agent$_0$ who tries to `sell' a
certain alternative $a_j$ as effectively as possible, has always the
same problem: How to get by period $N$ as many opinions as possible
within a certain target interval $[\ell, r]$, determined by the
closest left and right competitors of~$a_j$? -- Obviously, our
framework and setup is much more general than it looks at a first
glance.

\subsection{The underlying opinion dynamics: A linear and a non-linear
version}
\label{subsec:underlying_OD}
Our problem and approach presupposes an underlying opinion dynamics
given by a function~$\mathbf{f}^t$. But there are many. We will use a
linear and a non-linear variant, each of them probably being the most
prominent variant of their type.  

%

In the \emph{linear variant}, \revR{the} \revJ{opinion} dynamics is driven by
\emph{weighted averaging}: Weights $w_{ij}$ may express the respect or
competence that an agent~$i$ assigns to an agent~$j$; alternatively, a
weight $w_{ij}$ may express the influence, impact or power that
agent~$j$ has on agent~$i$. The weights that an agent assigns sum up
to one. The opinion space is the unit interval $[0,1]$. -- The history
of this linear system goes back to 
\citet{French:Formal-Theory-Social_Power:1956}, 
it has been already presented by
\citet{Harary:Unanimity-French:1959}, 
it was explicitly stated by
\citet{DeGroot:Reaching-Consensus:1974}, 
and it received a lot of attention, especially in philosophy, through
the book by \citet{Lehrer+Wagner:Rational-Consensus:1981}%
.\footnote{\citet{Lehrer+Wagner:Rational-Consensus:1981} 
do not interpret the iterated
  weighted averaging as a process in time. As stressed by
  \citet[p.~4]{Hegselmann:Truth-LaborDivision:2006}:  
  ``Their starting point is a
  `dialectical equilibrium', i.e., a situation after, the group has
  engaged in extended discussion of the issue so that all empirical
  data and theoretical ratiocination has been communicated. `\ldots{}
  the discussion has sufficiently exhausted the scientific information
  available so that further discussion would not change the opinion of
  any member of the group' (\cite[p.~19]{Lehrer+Wagner:Rational-Consensus:1981}). 
  The central
  question for Lehrer and Wagner then is: Once the dialectical
  equilibrium is reached, is there a rational procedure to aggregate
  the normally still divergent opinions in the group (cf.{} \cite[p.~229]{Lehrer:Replies:1981}?  
  Their answer is `Yes.' The basic idea for the procedure
  is to make use of the fact that normally we all do not only have
  opinions but also information on expertise or reliability of
  others. That information can be used to assign weights to other
  individuals. The whole aggregation procedure is then iterated
  weighted averaging with $t \to \infty$ and based on constant
  weights. It is shown that for lots of weight matrices the
  individuals reach a consensus whatever the initial opinions might be
  -- if they only were willing to apply the proposed aggregation
  procedure.''} We will refer to that model as the
\emph{DeGroot-model} (DG-model).\footnote{In philosophy the model is
  often called the \emph{Lehrer-Wagner model}.}

The \emph{non-linear variant} that we will use is the so called
\emph{bounded confidence model} (BC-model). In this model the agents
take seriously those others whose opinion are not too far away from
their own opinion: The agents have a certain confidence
radius~$\epsilon$ and update their opinions -- the opinion space is
again the unit interval $[0,1]$ -- by averaging over all opinions that
are from their own opinion not further away than~$\epsilon$: An
agent~$i$ updates to $x^{t+1}_i$ by averaging over the elements of the
set $\bigl\{ j \in \{1, 2, \ldots n \} \, \big\vert \, \abs{x^t_i -
  x^t_j} \le \epsilon \bigr\}$, i.e., over all the opinions that are
within what is called his or her confidence interval. The model was
defined by \citet{Krause:SozDyn:1997}, in 1998 it was coined \emph{bounded
confidence model} \citep{krause2000discrete}, and then for the
first time, to a certain extent, comprehensively analyzed, by both
simulations and rigorous analytical means, by
\citet{Hegselmann:OD-BC:2002}.

The model looks extremely simple. However, there are several warnings
in the literature on the BC-model, among them a very recent one: ``The
update rule \ldots{} is certainly simple to formulate, though the
simplicity is deceptive'' \cite[p.~2]{Wedin+Hegarty:BC-OD-ContinuousNoConsensus:2014}. 
The authors'
warning is well founded: The simple update rule generates a
complicated dynamics that still is only partially understood. The main
reason for that is this: The updating rule of the BC-model can be
described as assigning weights to other agents. All agents with
opinions out of the confidence interval get a weight of~$0$; agents
within get a weight of~$1$ divided by the number of agents that are
within the interval. Therefore, the BC-dynamics is weighted averaging
as well. However, there is a crucial difference to the linear
DG-model: The weights of the BC-model are \emph{time-dependent} and,
even worse, \emph{discontinuously dependent on the current
  profile}. That causes a lot of trouble -- and, at the same time,
generates many of interesting effects. As a consequence, the BC-model
became a subject for all sorts of analytical or computational analysis
and a starting point for extensions of all sorts.  The body of
literature on the BC-model is correspondingly huge.

Structural results in the BC-model were obtained with respect to
convergence and its rate
\cite{Dittmer:OD-BC:2001,Krause:ArithmeticGeometricDiscreteSystems:2006,Lorenz:OD-BC-Continuous:2006},
thresholds for the confidence radius
\cite{Fortunato:OD-BC-Threshold:2005}, the identification of the
really crucial topological and metric structures
\cite{Krause:ArithmeticGeometricDiscreteSystems:2006}, or the
influence of the underlying network
\cite{Weisbuch:OD-BC-Networks:2004}.  The influence of a `true'
opinion, to which (some of) the individuals are attracted, received
special attention
\cite{Hegselmann:Truth-LaborDivision:2006,Malarz:OD-TruthSeekers:2006,Douven+Riegler:ExtendingHK:2010,Douven+Kelp:TruthApprox:2011,Kurz+Rambau:HegselmannKrauseConjecture:2011,Wenmackers+Vanpoucke+Douven:Inconsistencies:2012,Wenmackers+Vanpoucke+Douven:Rationality:2014}.
With a grain of salt, the true opinion can also be interpreted as a
control that is constant over time and that is contained in each
individual's confidence interval.

Many variants of the original BC-model (discrete time, finitely many
individuals, continuous opinion space) have been proposed, among them
pairwise sequential updating
\cite{Defuant+Neau+Amblard+Weisbuch:MixingBeliefs:2000}, a discrete
opinion space \cite{Fortunato:OD-BC-Discrete:2004}, a
multi-dimensional opinion space
\cite{Fortunato+Latora+Luchino+Rapisarda:OD-BC-Dim2:2005,Krause:OD-TimeVariant-HighDim:2005},
a noisy opinion space
\cite{bhattacharyya2013convergence,pineda2013noisy}, a continuous
distribution instead of a finite-dimensional vector of opinions
\cite{Lorenz:OD-Stabilization:2005,Lorenz:OD-BC-Survey:2007},
\rev{continuous} time
\rev{\cite{Blondel+Hendricks+Tsitsiklis:OD:2010}, and a continuum of
  agents \cite{Wedin+Hegarty:BC-OD-ContinuousNoConsensus:2014}}.
Alternative dynamics have been enhanced with BC-type features by
several authors
\cite{Stauffer:Sznajd-Model-LimitedPersuasion:2002,Stauffer:Sznajd-Model-Simulation:2003,Fortunato:SznajdConsensus:2005,Stauffer+Sahimi:OD-Simulation:2006,Rodrigues+DaFCosta:SznajdNetworks:2005}
in order to make the resulting emergent effects more interesting.

It is interesting to note that simulations play an important role in
the investigation of the innocent-looking models for opinion dynamics
\cite{Stauffer:Sznajd-Model-Simulation:2003,Hegselmann:Truth-LaborDivision:2006,Fortunato+Stauffer:OD-Simulations:2006,Stauffer+Sahimi:OD-Simulation:2006}
-- a hint that some aspects of opinion dynamics are very hard to
deduce purely theoretically. The arguably
most general account of theoretical aspects was contributed by
Chazelle on the basis of a completely new methodology around
\rev{function theoretical arguments} 
\cite{Chazelle:TotalsEnergy:2011}.  Moreover, opinion dynamics in
multi-agent systems can be seen as an instance of \emph{influence
  system} -- this broader context is also described
by~\cite{Chazelle:InfluenceSystemsFOCS:2012}.

\revJ{\citet{Carletti+Fanelli+Grolli+Guarino:EfficientPropaganda:2006}
  study the influence of a given, fixed exogenous propaganda opinion
  on the Deffuant-Weisbuch opinion dynamics.  However, the problem of
  optimally choosing a propaganda opinion is not considered.}
\revJ{Implicitly, the control of opinion dynamics started out from
  gaining control over the communication structure
  \citep{Lorenz+Urbig:EnforceConsensus:2007} or additional system
  parameters \citep{Lorens:FosteringConsensus:2008}.}  Recently, the
effect of time-varying exogenous influences on the consensus process
has been studied by \citet{mirtabatabaei2014eulerian}.
\revJ{\citet{Kurz:BCControlledConvergence:2015} investigated how the
  time to stabilization changes if controlled opinions enter the
  scene.}  For the first time to the best of our knowledge, the notion
of opinion control appeared literally only in the title of a paper by
\citet{Zuev+Feyanin:OC:2012}, based on a different dynamical model,
though. For continuous time optimal \rev{control techniques} have been
applied to opinion consensus, exemplarily also for the BC-dynamics, in
the preprint by \citet{albi2014kinetic}.  Closest to our research is,
up to now, probably the investigation by
\citet{Fortunato:OD-DamageSpreading:2005} about so-called damage
spreading: what happens if some of the opinions in a BC-model change
erratically and possibly drastically?  The setting in that paper,
however, has not been utilized to find in some sense `optimal
damages', \revJ{i.e., some that lead to an outcome that is most
  desirable among all possible outcomes.}

\subsection{Specification of the campaign problem for the DG- and 
the BC-model}
\label{subsec:specification}

In what follows, we analyze our optimal control problem with the
linear DG- and the non-linear BC-model as the underlying opinion
dynamics that agent$_0$ tries to control: Given the DG- or BC-dynamics
and using the controls, i.e., placing one opinion per period here or
there in the opinion space, agent$_0$ tries to maximize the number of
agents with opinions that in a future period~$N$ are in a certain
target interval $[\ell, r]$ -- and therefore would `buy' the
corresponding alternative $a_j$ that agent$_0$ is actually campaigning
for. -- 
In the following, we will switch to a more vivid language fitting this
interpretation: agents are called \emph{voters}, the special agent$_0$
is called the \emph{controller}, the target interval $[\ell, r]$
is called the \emph{conviction interval}, and voters in the target
interval are called \emph{convinced voters}.


As an example, we will analyze a specific instance of the campaign
problem: There are $11$ voters,
governed by a DG- or BC-version of the (reactive) function
$\mathbf{f}^t$. At $t = 0$, the $11$ voters start with the opinions
$0$, $0.1$, $0.2$, \ldots, $0.9$, $1$.  The \rev{confidence radius is
  given by $\epsilon=0.15$ and the} conviction interval is
$[0.375, 0.625]$. That conviction interval would be the target in a
campaign in which the alternatives
$\frac{1}{4}, \frac{1}{2}, \frac{3}{4}$ are competing and
$\frac{1}{2}$ is the preferred alternative.  The goal is to maximize
the number of convinced voters, i.e., those with opinions in the
conviction interval, in a certain future period $N$ with
$N = 1, 2, \ldots, 10$.  The benchmark problem looks like a baby
problem. But it is a monster: It will turn out that for higher one
digit values of $N$ we could not solve it by the most sophisticated
methods available.\revJ{\footnote{These values were chosen in a research
  seminar because they looked very simple and came to our mind
  first. We were convinced that we would solve all related problems
  very fast by whatever method and could start to worry about more
  serious problem sizes.  We were wrong.  It is still this
  innocent-looking benchmark campaign problem that determines the
  agenda, as we will see.}}

More specifically, we will see, that even for this innocent-looking
example we were not able to find the optimal number of convinced
voters for all numbers of stages between $1$ and~$10$.  In
Table~\ref{tab:allresults} we summarize our results on the benchmark
campaign problem in this paper.  In the rest of the paper we will
explain in detail how we obtained that knowledge.
\begin{table}[h]
  \centering\footnotesize
  \begin{tabular}{r@{\hspace*{1cm}}rr}
    \toprule
    \# stages  & \multicolumn{2}{c}{\# convincable voters}  \\
    & lower bound$^*$ & upper bound$^{**}$ \\
    \midrule 
    0 &  3 & 3\\
    1 &  3 & 3\\
    2 &  4 & 4\\
    3 &  5 & 5\\
    4 &  5 & 5\\
    5 &  6 & 6\\
    6 &  6 & 6\\
    7 &  8 & 11\\
    8 &  8 & 11\\
    9 &  8 & 11\\
    10 & 11& 11\\
    \bottomrule
  \end{tabular}
  \caption{The optimal number of convinced voters in the benchmark
    example lies between the given lower and upper bounds, depending
    on the number of stages; we will explain in 
    in Sections~\ref{sec:Heuristics}$^*$
    and~\ref{sec:computational-info}$^{**}$ how we obtained this information.}
  \label{tab:allresults}
\end{table}

\subsection{Stucture of this Paper}
\label{subsec:structure}

In what follows, we
\begin{itemize}
\item \revR{define exactly} the new problem \emph{optimal opinion
    control}\footnote{During finalizing work on this paper, it came to
    our attention that optimal opinion control was independently
    introduced and investigated with a different objective in
    continuous time by
    \citet{Wongkaew+Caponigro+Borzi:LeadershipFlocking:2014}} and an
  example instance of it, the benchmark campaign problem, for
  illustration and test purposes,
\item develop two exact mathematical models based on mixed integer
  linear programming to characterize optimal controls\footnote{Another
    application of mixed integer linear programming techniques in
    modeling social sciences was, e.g., presented by
    \citet{Kurz:InversePowerIndexProblem:2012}.},
\item devise three classes of primal heuristics (combinatorial
  tailor-made, meta-heuristics, model predictive control) to find good
  controls,
\item present computational results on all methods applied to our
  benchmark problem,
\item sketch possible lines of future research.
\end{itemize}


In Section~\ref{sec:Opinion-Dynamics} we formally introduce the
optimal opinion control problem.  Section~\ref{sec:Numerics} shows
what can happen if the pitfalls of numerical mathematics are ignored
in computer simulations.  Section~\ref{sec:Structures} presents some
structural knowledge about optimal controls in small special cases,
thereby indicating that a general theoretical solution of the optimal
control problem is not very likely. Computational results are
presented in Section~\ref{sec:computational-info}. The detailed
presentation of the underlying exact mathematical models is postponed
to Appendix~\ref{sec:Mathematical-Model}, followed by information on
the parameter settings for the commercial solver we used in
Appendix~\ref{sec:param-sett-milp}.  Our heuristics are introduced in
Section~\ref{sec:Heuristics}. In Section~\ref{sec:interpr-results} we
\rev{interpret} the obtained results. Conclusions and further directions
can be found in Section~\ref{sec:conclusion-and-outlook}.

\section{Modeling the Dynamics of Opinions and Their Control}
\label{sec:Opinion-Dynamics}

We will now formalize the ideas \revJ{sketched so far}.  
In this paper, we
will restrict ourselves to the arguably simplest case where an opinion
of a voter $i \in I$ can be represented by a real number $x_i$ in the
unit interval $[0,1]$.

The opinions may change over time subject to a certain \emph{system
  dynamics}: We assume that time is discretized into \emph{stages} $T
:= \{0, 1, 2, \dots, N\}$.  The opinion of voter~$i \in I$ in stage~$t
\in T$ is denoted by~$x^t_i$.  We call, as usual, the vector
$\mathbf{x}^t := (x^t_i)_{i \in I}$ the \emph{state} of the system in
stage~$t$.  The \emph{system dynamics} $\mathbf{f}^t$ is a vector
valued function that computes the state of the system
$\mathbf{x}^{t+1}$ as $\mathbf{x}^{t+1} :=
\mathbf{f}^t(\mathbf{x}^t)$.  We assume a given \emph{start value}
$x^{\text{start}}_i$ for the current opinion of each voter~$i \in I$.
Thus, $x^0_i = x^{\text{start}}_i$ holds for all $i \in I$.

Depending on how $\mathbf{f}^t$ is defined, we obtain different models
of opinion dynamics.  In this paper, we will only consider so-called
\emph{stationary} models, where $\mathbf{f}^t$ does not depend on the
stage~$t$.  Therefore, from now on, we will drop the superscript $t$
from the notation and write $\mathbf{f}$ for the system dynamics.




\subsection{The DeGroot Model}
\label{sec:DeGroot}

In this model, each voter is again in contact with each other in every
stage.  The strengths of the influences of opinions on other opinions
are given by \revJ{non-negative} weights $w_{ij}$ with $\sum_{j \in I}
w_{ij} = 1$ for all $i \in I$, with the meaning that the opinion of
voter~$i$ is influenced by the opinion of voter~$j$ with
weight~$w_{ij}$.  The mathematical formulation of this is to define
$\mathbf{f} = (f_i)_{i \in I}$ as a weighted arithmetic mean in the
following way:
\begin{equation}
  \label{eq:DeGroot}
  f_i(x_1, \dots, x_n) := \sum_{j \in I} w_{ij} x_j.
\end{equation}

It can be shown that this, in the limit, leads to
consensus.\footnote{This is an easy consequence of the Banach Fixed
  Point Theorem, since this dynamics is a contraction.} It leads, as
we will see below, still to an interesting optimal control problem.

\subsection{The Bounded-Confidence Model}
\label{sec:Bounded-Confidence}

The motivation for this model is that our voters ignore too distant
opinions of others.  Formally, we fix once and for all \rev{voters and stages} an $\epsilon
\in (0,1)$, and each voter is influenced only by opinions that are no
more than $\epsilon$ away from his or her own opinion.  We call
$[x^t_i - \epsilon, x^t_i + \epsilon]\cap[0,1]$ the \emph{confidence interval
  of voter~$i$ in \revR{stage}~$t$}.  Let the \emph{confidence set} $I_i(x_1,
\dots, x_n)$ of voter $i \in I$ in state $\mathbf{x} = (x_1, \dots,
x_n)$ be defined as
\begin{equation}
  \label{eq:ConfidenceSet}
  I_i(x_1, \dots, x_n) := \setof{j \in I}{\abs{x_j - x_i} \le \epsilon}.
\end{equation}
Observe that $I_i(x_1, \dots, x_n)\neq\emptyset$ due to $i\in I_i(x_1, \dots, x_n)$.

Then the system dynamics of the BC-model is given as follows:
\begin{equation}
  \label{eq:Bounded-Confidence}
  f_i(x_1, \dots, x_n) := \frac{1}{\abs{I_i(x_1, \dots, x_n)}} \sum_{j \in I_i(x_1, \dots, x_n)} x_j.
\end{equation}

A possible extension might be a stochastic
disturbance on $\epsilon$, but, as we will see, bounded-confidence is
still far from being completely understood.  Therefore, in this paper
bounded confidence will be in the main focus.

\subsection{A New Opinion Control Model}
\label{sec:Control}

Given a dynamical system as above, we can of course think about the
possibility of a control that can influence the system dynamics.
Formally, this means that the system dynamics $\mathbf{f}$ depends
also on some additional exogenously provided data $\mathbf{u}$, the
\emph{control}.


Formally, this means in the simplest case (and we will restrict to
this case) that the controller can present an additional opinion $u^t$
in every stage that takes part in the opinion dynamics.  The
corresponding system dynamics, taking the control as an additional
argument, are then given as follows (with $x_0 := u$ and $I_0 := I
\cup \{0\}$ as well as $w_{ij}$ this time with $\sum_{j \in I_0}
w_{ij} = 1$ for easier notation):

\begin{align}
  \label{eq:Opinion-Control-DeGroot}
  f_i(x_0; x_1, \dots, x_n)
  &:=
  \sum_{j \in I_0} w_{ij} x_j,\tag{DeGroot-Control}\\
  \label{eq:Opinion-Control-Bounded-Confidence}
  f_i(x_0; x_1, \dots, x_n)
  &:=
  \frac{1}{\abs{I_i(x_0, x_1, \dots, x_n)}} \sum_{j \in I_i(x_0, x_1, \dots, x_n)} x_j \tag{Bounded-Confidence-Control}.
\end{align}

We can interpret this as a usual model of opinion dynamics with an
additional opinion~$x_0$ that can be positioned freely in every stage
by the controller.
The aim of the controller is as follows:
Control opinions in a way such that after $N$ stages there are as many
opinions as possible in a given interval $[\ell, r] \subseteq [0,1]$.

To formalize this, fix an interval $[\ell, r]$ (the \emph{conviction
  interval}), and let \emph{the conviction set} $J(x_1, \dots, x_n)$ denote
the set of all voters $j \in I$ with $x_j \in [\ell, r]$.  We want to
maximize the number of convinced voters.  Thus, the problem we want to
investigate is the following deterministic discrete-time optimal control
problem:
\begin{align}
  \label{eq:Optimal-Control}
  \max_{x^0_0, x^1_0, \dots, x^{N-1}_0} &\abs{J(x^N_1, \dots, x^N_n)}\notag\\
  \text{subject to}\notag\\
  x^{0}_i &= x^{\text{start}}_i 
  & \forall i \in I, 
  \tag{Start Configuration}\\
  x^{t+1}_i &= f_i(x^t_0; x^t_1, \dots, x^t_n)
  & \forall i \in I, t = 0, 1, \dots, N-1,\tag{System Dynamics}\\
  x^t_0 &\in [0,1]
  & \forall t = 0, 1, \dots, N-1, \tag{Control Restrictions}
\end{align}
where $\mathbf{f} = (f_i)_{i \in I}$ is one of the controlled system dynamics
in Equations~\eqref{eq:Opinion-Control-DeGroot}
and~\eqref{eq:Opinion-Control-Bounded-Confidence}, resp.

\section{Simulation and Pitfalls from Numerical Mathematics}
\label{sec:Numerics}

\revJ{No matter what we do: a computer can represent only finitely
  many distinct numbers.  Thus, it is impossible that a computer can
  distinguish infinitely many numbers like there exist in $[0,1]$.
  Even worse: if a number type is used for computer programming that
  uses a fixed number of bits, i.e., zeros and ones in the binary
  representation, like ``float'' or ``double'', then distinct real
  numbers that are very close can be interpreted as identical numbers
  by the computer.  This has a serious influence on our ability to
  check correctly whether or not one opinion is in the confidence
  interval of another whenever this is a cutting-edge decision.
  More specifically: }If we try to solve our benchmark problem for the
BC-model, the computer has -- again and again -- to decide the
question whether or not $\abs{x_i - x_j} \le \epsilon$, where $x_i$,
$x_j$, and $\epsilon$ are, in particular, \emph{real} numbers. For a
human being with a tiny bit of math training it is easy to answer the
question whether $\abs{0.6 - 0.4} \le 0.2$. If a computer has to
answer that simple question and uses what in some programming
languages is called the data format ``real'' or ``float'' or even
``double'' (float with double precision), then the computer might get
it wrong. In Figure~1, left, one can see that effect: We start the
dynamics with $6$ voters that are regularly distributed at the
positions $0, 0.2, 0.4, \ldots, 1$. The confidence radius $\epsilon$
is~$0.2$.\revJ{\footnote{The number $0.2$ is tricky because its binary
    representation as a floating point number is determined by
    $0.2 = \frac{1}{8} + \frac{1}{16} + \frac{1}{128} + \frac{1}{256}
    + \dots {} = 0.\overline{0011}_2$,
    i.e., its exact representation would need infinitely many bits.
    For example, a cutoff bit representation would be identical to a
    smaller number than~$0.2$.  Thus, checking whether or not some
    number is smaller or larger than~$0.2$ checks this question not
    for $0.2$ but for a smaller number.  How exactly $0.2$ is finitely
    represented in a computer is usually defined by an IEEE
    standard.}}  Obviously the computer (using a program written in
Delphi, but in NetLogo the analogous mistake would happen, possibly
somewhere else) answers the question whether $\abs{0.6 - 0.4} \le 0.2$
in a wrong way. As a consequence, from the first update onwards the
dynamics is corrupted: Given our start distribution and the
homogeneous, constant, and symmetric confidence radius, the opinion
stream should be mirror symmetric with respect to a horizontal line at
$y=0.5$.  That symmetry is forever destroyed by the very first update.
What happens here is no accident. It is the necessary consequence of
the floating point arithmetic that computers use to approximate
\revR{real} numbers.  Using floating point arithmetic each number is
represented with a finite number of binary digits, so a small error is
possibly made. For a hard decision like $\abs{x_i - x_j} \le \epsilon$
or $\abs{x_i - x_j} > \epsilon$ a small error is sufficient to draw
the wrong conclusion, whenever $\abs{x_i - x_j}$ equals or is rather
close to~$\epsilon$.  The only way for us to cope with this problem is
to resort to exact rational arithmetics throughout, although there may
be more sophisticated methods to improve efficiency. This numerical
instability has the more serious consequence that of-the-shelf
optimization algorithms with floating point arithmetics can not be
used without checking the results for correctness in exact
arithmetics.\footnote{\revR{\citet{Polhill+Izquierdo+Gotts:FloatingPointGhost:2005}
    demonstrate that, based upon floating-point numbers, in several
    agent-based models branching statements lead to severe numerical
    artefacts.}}


Using exact arithmetic we obtain that the opinions of our voters are given by
\begin{eqnarray*}
 x^0&=&(0.0,0.2,0.4,0.6,0.8,1.0),\\
 x^1&=&(0.1,0.2,0.4,0.6,0.8,0.9),\\
 x^2&=&(0.15,0.2\overline{3},0.4,0.6,0.7\overline{6},0.85),\\
 x^3&=&(0.191\overline{6},0.26\overline{1},0.4\overline{1},0.5\overline{8},0.73\overline{8},
      0.808\overline{3})\\
    &=&\left(\frac{23}{120},\frac{47}{180},\frac{37}{90},\frac{53}{90},\frac{133}{180},
      \frac{97}{120}\right),\\
 x^4&=&\left(\frac{163}{720},\frac{311}{1080},\frac{227}{540},\frac{313}{540},\frac{769}{1080},
      \frac{557}{720}\right),\\
 x^5&=&\left(\frac{673}{2160},\frac{673}{2160},\frac{3271}{8640},\frac{5369}{8640},\frac{1487}{2160},
      \frac{1487}{2160}\right),\\
 x^6&=&\left(\frac{577}{1728},\frac{577}{1728},\frac{577}{1728},\frac{1151}{1728},\frac{1151}{1728},
      \frac{1151}{1728}\right)\\
    &=&\Big(0.333912\overline{037},0.333912\overline{037},0.333912\overline{037},0.666087\overline{962},\\
    & &0.666087\overline{962},0.666087\overline{962}\Big).
\end{eqnarray*}
The corresponding correct trajectory is drawn on the right hand side
of Figure~\ref{fig_numerical_disaster}.


\begin{figure}[htp]
  \centering
  \includegraphics[width=0.35\linewidth]{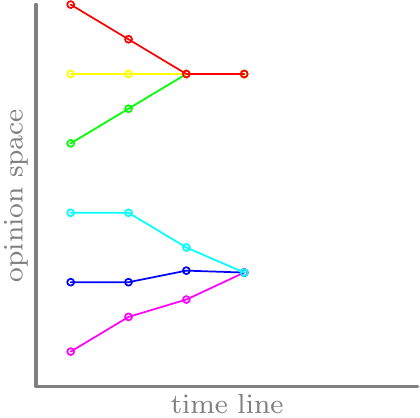}\quad\quad\quad\quad\quad
  \includegraphics[width=0.35\linewidth]{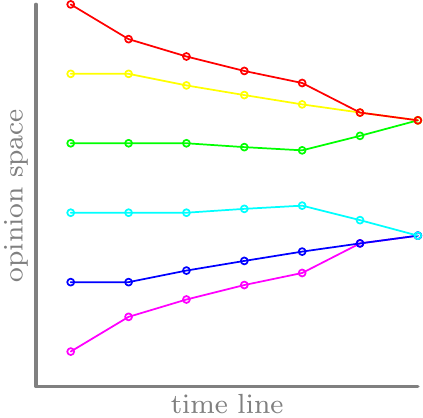}%
  \caption{A computational disaster caused by a tiny inaccuracy (left:
    numerical artefact; right: correct result)}
  \label{fig_numerical_disaster}
\end{figure}

As we have seen, a small error in the positions of the
voters, computational or by observation, can have a drastic
effect. We mention that this effect can not happen for numerical
stable dynamics like, e.g., the DG-model. The only patch that came
to our mind which was capable of dealing with the numerical
instability was to use exact arithmetic. This means that we represent
all numbers as fractions where the numerator and the denominator are
integers with unlimited accuracy. We remark that we have used the
Class Library of Numbers (CLN) a C++-package, but similar packages
should be available also for \revR{other} programming languages.

There are quite a lot of articles dealing with the simulation of the
BC-model. To our knowledge none of these mentioned the use of exact
arithmetic. So one could assume that the authors have used ordinary
floating point numbers with limited precision for there
considerations. It is an interesting question whether all of these
obtained results remain more or less the same if being recalculated
with exact arithmetic.  For no existing publication, however, we found
any evidence that the conclusions are only artefacts of numerical
trouble.  For results using randomized starting configurations the
probability is zero that the distance between agents equals the
confidence radius.  In those experiments numerical artifacts are much
less likely (though not impossible) than in simulations starting from
equidistant configurations.

We have to admit, that in the starting 
phase of our investigation in optimal control of opinion dynamics,  
we have also used floating point arithmetic. We heuristically found controls
achieving $10$ voters after $10$ stages. Using exact arithmetic it turned out
that the computed control yields only $4$ convinced voters, which is a really 
bad control, as we will see later on.

\section{Basic Structural Properties of Optimal Controls}
\label{sec:Structures}
In this section we collect some basic facts about structural
properties of optimal controls, mainly for the BC-model. While 
generally the DG-model has nicer theoretical properties, there is an
exception when considering the ordering of the voters over time.

\begin{Lemma}(Cf. \cite[Lemma 2]{krause2000discrete}.)
  \label{lem:ordering}
  Consider an instance of the BC-model (with control or without).
  \begin{enumerate}
    \item[(1)] If $x_i^t=x_j^t$, then $x_i^{t+1}=x_j^{t+1}$.
    \item[(2)] If $x_i^t\le x_j^t$, then $x_i^{t+1}\le x_j^{t+1}$.
  \end{enumerate}
\end{Lemma}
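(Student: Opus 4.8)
The plan is to reduce both claims to an elementary fact about arithmetic means of finite multisets. Fix the stage~$t$, write $a:=x_i^t$ and $b:=x_j^t$ for two voters $i,j\in I$, and -- in the controlled variant -- simply treat the control $x_0^t$ as one additional agent, so that in either variant the update of a voter $k$ is the plain mean $\operatorname{mean}(C_k)$ of the multiset $C_k:=\{\,x_\ell^t : \ell\in I_k(\mathbf{x}^t)\,\}$, where $\ell$ ranges over $I$ (resp.\ over $I_0$). Note $C_k\neq\emptyset$ because $k\in I_k(\mathbf{x}^t)$. Part~(1) is then immediate: if $a=b$, then $|x_\ell^t-a|\le\epsilon$ holds if and only if $|x_\ell^t-b|\le\epsilon$ for every agent~$\ell$, hence $I_i(\mathbf{x}^t)=I_j(\mathbf{x}^t)$, so $C_i=C_j$ and $x_i^{t+1}=x_j^{t+1}$.

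For part~(2) assume $a\le b$. First I would split the two multisets along their common part $M:=C_i\cap C_j$, writing $C_i=A\uplus M$ and $C_j=M\uplus B$ with $A:=C_i\setminus C_j$, $B:=C_j\setminus C_i$ (all taken with multiplicity). The geometric core is the following: every $x\in A$ satisfies $x<b-\epsilon$, since $x\le a+\epsilon\le b+\epsilon$ forces the violated condition $|x-b|\le\epsilon$ to fail on the left side; symmetrically every $x\in B$ satisfies $x>a+\epsilon$; and every $x\in M$ lies in $[b-\epsilon,\,a+\epsilon]$. Hence, whenever the relevant multisets are nonempty, $\sup A\le b-\epsilon\le\inf M$, $\sup M\le a+\epsilon<\inf B$, and also $\sup A\le\inf B$ (if $b-a\le 2\epsilon$ this follows from the previous two; if $b-a>2\epsilon$ then $M=\emptyset$ and $\sup A\le a+\epsilon<b-\epsilon\le\inf B$).

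It then remains to prove the following: for finite multisets of reals with $C_i=A\uplus M\neq\emptyset$, $C_j=M\uplus B\neq\emptyset$, $\sup A\le\inf M\le\sup M\le\inf B$, and $\sup A\le\inf B$, one has $\operatorname{mean}(C_i)\le\operatorname{mean}(C_j)$. If $M\neq\emptyset$, then $\operatorname{mean}(A\uplus M)$ is a convex combination of $\operatorname{mean}(A)$ and $\operatorname{mean}(M)$ -- or just $\operatorname{mean}(M)$ when $A=\emptyset$ -- and since $\operatorname{mean}(A)\le\sup A\le\inf M\le\operatorname{mean}(M)$ this yields $\operatorname{mean}(C_i)\le\operatorname{mean}(M)$; symmetrically $\operatorname{mean}(C_j)\ge\operatorname{mean}(M)$, and chaining the two inequalities gives the claim. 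If $M=\emptyset$, then $A=C_i$ and $B=C_j$ are nonempty with $\sup A\le\inf B$, so $\operatorname{mean}(C_i)\le\operatorname{mean}(C_j)$ directly. Applying this with $C_i,C_j$ the confidence multisets of voters $i,j$ completes the proof.

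The only delicate point I expect is the bookkeeping around empty multisets: the inequalities among the various suprema and infima are only meaningful where the sets are nonempty, and the convex-combination argument must be allowed to degenerate when $A$ or $B$ vanishes; in particular the special case $M=\emptyset$ (equivalently $b-a>2\epsilon$), where the pivot value $\operatorname{mean}(M)$ does not exist, has to be handled on its own. Everything beyond this is routine, and the controlled case needs no extra work since the control opinion enters the confidence sets exactly like an ordinary agent's opinion.
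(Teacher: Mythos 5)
Your proof is correct and takes essentially the same route as the paper's: decompose the two confidence sets into their common part and the two one-sided remainders ($L,C,R$ in the paper, $A,M,B$ in your notation), establish the ordering of the three blocks, and compare the resulting means. You merely spell out the final convex-combination step and the degenerate cases ($M=\emptyset$, empty side blocks) that the paper compresses into a single ``Thus''.
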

\begin{proof}
  If the positions of voter $i$ and voter $j$ coincide at stage $t$, then they have identical confidence 
  sets and the system dynamics yields the same positions for $i$ and $j$ at stage $t+1$. For (2), we assume 
  $x_i^t<x_j^t$ w.l.o.g. We set $C=I_i(x_1^t,\dots,x_n^t)\cap I_j(x_1^t,\dots,x_n^t)$, $L=I_i(x_1^t,\dots,x_n^t)\backslash C$, 
  and $R=I_j(x_1^t,\dots,x_n^t)\backslash C$. Due to $x_i^t<x_j^t$ we
  have $x_l^t<x_c^t<x_r^t$ for all $l\in L$, $c\in C$, and $r\in R$. 
  Thus $x_i^{t+1}\le x_j^{t+1}$.
\end{proof}

The analogous statement for the DG-model is wrong in general.

Next we observe that one or usually a whole set of optimal
controls exists. The number of convinced voters is in any stage
trivially bounded from above by the total number of voters
$|I|$. Hence, to every control there corresponds a bounded integer
valued number of convinced voters. With some technical
effort an explicit bound can be computed.

  First, we observe that there are some boundary effects. Consider a
  single voter with start value $x_1^0=\frac{1}{2}$ and
  $\epsilon=\frac{1}{2}$. Further suppose that the conviction interval
  is given by $[0,\delta]$, where $\delta$ is small. The most
  effective way to move the opinion of voter $1$ towards $0$ is to
  place a control at $0$ at each stage. With this we obtain
  $x_1^t=\frac{1}{2^{t+1}}$ for all $t$. Thus the time when voters~$1$
  can be convinced depends on the length $\delta$ of the conviction
  interval. This is due to the fact that we can not place the control
  at $x_1^t-\epsilon$ if $x_1^t$ is too close to the boundary. The
 same reasoning applies for the other boundary at~$1$.  In order to
  ease the exposition and the technical notation we assume that no
  opinion is near to the boundaries in the following lemma.

\begin{Lemma}
 Consider an instance of the BC-model such that the start values and the conviction interval
 $[l,r]$ are contained in $[\epsilon,1-\epsilon]$. It is possible to select suitable 
 controls at each stage such that after at most $\frac{2n+1}{\epsilon}+2$ stages all $|I|$ voters are convinced.
\end{Lemma}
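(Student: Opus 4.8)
The plan is to exhibit an explicit control strategy that herds all voters into the conviction interval in a bounded number of stages, using Lemma~\ref{lem:ordering} to keep the bookkeeping manageable. By part~(2) of that lemma the voters stay in their initial order throughout, so write $x_1^t \le x_2^t \le \dots \le x_n^t$ for all $t$ and think of the configuration as living inside $[\epsilon, 1-\epsilon]$ at the start. The controller's job is to compress the \emph{spread} $x_n^t - x_1^t$ and simultaneously to drag the whole block into $[\ell,r]$, and the point is that one control opinion per stage suffices to do both slowly but surely.

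First I would show that the controller can always shrink the spread by a fixed amount per stage as long as the spread exceeds $\epsilon$. The idea: if $x_n^t - x_1^t > \epsilon$, place the control $u^t$ at (or near) $x_1^t + \epsilon$, i.e.\ just inside the confidence interval of the lowest voter but outside that of the highest. Then voter~$1$ averages with at least $u^t$ and possibly some others, all of which lie in $[x_1^t, x_1^t+\epsilon]$, so $x_1^{t+1} \ge x_1^t + c$ for some $c$ bounded below in terms of $\epsilon$ and $n$ (since there are at most $n+1$ opinions in the average, the pull toward $u^t$ is at least $\frac{\epsilon}{n+1}$ or so when $u^t$ is the only partner, and otherwise the other partners are themselves above $x_1^t$). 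Meanwhile $x_n^t$ does not move left because $u^t$ is outside its confidence interval and all voters are $\ge x_1^t$. Hence after roughly $\frac{(1-2\epsilon)}{c} \le \frac{n+1}{\epsilon}\cdot(\text{const})$ such stages the spread drops to at most $\epsilon$. The constant is where the $\frac{2n+1}{\epsilon}$ bound in the statement comes from; I would be slightly generous rather than optimize it.

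Second, once the spread is at most $\epsilon$, \emph{every} pair of voters is mutually within confidence of each other, so in a single further stage with the control placed anywhere inside $[x_1^t, x_n^t]$ (say at the current mean, or at $(\ell+r)/2$ if that already lies in the interval $[x_1^t-\epsilon, x_n^t+\epsilon]$) the whole block contracts toward a common value; in fact from this moment the controller can place $u^t$ at any desired point $y \in [x_n^t - \epsilon, x_1^t + \epsilon]$ and every voter moves strictly toward $y$. Using one or two extra stages to steer the common value into $[\ell, r]$ — which is possible because $[\ell,r] \subseteq [\epsilon, 1-\epsilon]$ guarantees the required control opinions stay in $[0,1]$ — finishes the argument; adding the final $+2$ accounts for these clean-up stages.

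The main obstacle will be making the ``spread shrinks by at least $c$ per stage'' step honest: I have to rule out the annoying case where, after placing $u^t = x_1^t + \epsilon$, voter~$n$'s confidence set suddenly reorganizes in a way that pulls voters apart, or where voter~$1$ already has many partners so the incremental pull toward $u^t$ is diluted. The saving grace is that any additional partner of voter~$1$ lies in $(x_1^t, x_1^t+\epsilon]$, so it only helps; and voter~$n$ can only be pulled \emph{down} (never further up than its current partners), never up past $x_n^t$, so the spread is genuinely monotone non-increasing and strictly decreasing at rate $\ge c$ while it exceeds $\epsilon$. Pinning down the cleanest choice of $c$ — and checking the boundary caveat that $u^t = x_1^t+\epsilon$ is a legal control, which is exactly why we assumed the configuration sits in $[\epsilon,1-\epsilon]$ — is the only real work; the rest is counting stages.
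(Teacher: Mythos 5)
Your overall architecture matches the paper's: first compress the spread to at most $\epsilon$ by repeatedly placing the control at $x_1^t+\epsilon$, then collapse to a common opinion, then steer that common opinion into $[\ell,r]$. The compression analysis is essentially sound (the paper gets a per-stage gain of $\epsilon/(|R|+1)\ge\epsilon/n$ for voter~$1$ by the same averaging estimate you sketch; your remark that $x_n^t$ ``does not move left'' should read ``does not move right,'' but you state the correct fact later). The budget of roughly $n/\epsilon$ stages for this phase, plus one stage to reach exact consensus by placing the control at $(x_1^t+x_n^t)/2$, is exactly the paper's Step~1.

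The genuine gap is in your final phase. You claim that ``one or two extra stages'' suffice to steer the common value into $[\ell,r]$, and you charge only the ``$+2$'' for it. This is false: once all $n$ voters sit at a common opinion $x$, any admissible control must lie within $\epsilon$ of $x$ to influence them at all, and the next common opinion is $\frac{nx+u}{n+1}$, so a single stage moves the consensus by at most $\frac{\epsilon}{n+1}$. If $x$ ends up far from $[\ell,r]$ (the paper's own worst-case example, $x_i^0=\epsilon$ with $[\ell,r]=[1-\epsilon,1-\epsilon]$, shows this happens), the steering phase needs up to $\frac{n+1}{\epsilon}$ stages of its own. That is precisely where the other half of the stated bound comes from: $\frac{2n+1}{\epsilon}=\frac{n}{\epsilon}+\frac{n+1}{\epsilon}$, with the two ``$+2$'' stages reserved for the one-step consensus and for the final exact placement (setting the control to $x+(n+1)(\ell-x)$ once $\ell-x<\frac{\epsilon}{n+1}$ so that the consensus lands exactly at $\ell$). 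As written, your stage count does not cover the second phase, so the proof does not establish the claimed bound; you need to add the $\frac{n+1}{\epsilon}$-stage drift argument for the coinciding opinions.
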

\begin{proof}
  We will proceed in two steps. In the first step we ensure that all voters have the same 
  opinion after a certain amount of stages. In the second step we will move the coinciding 
  opinions inside the conviction interval.

    Without loss of generality, we assume the ordering
    $x_1^0\le\dots\le x_n^0$ and observe $x_n^0-x_1^0\le 1-2\epsilon$.
    If $x_n^t-x_1^t>\epsilon$, we place a control at
    $\rev{x_1^t}+\epsilon<x_n^t$. \rev{As an abbreviation we set 
    $R=I_1\!\left(x_1^t,\dots,x_n^t\right)$.}
    At most $n-1$ of the $n$ voters can be inside the confidence set of 
    voter~$1$, \rev{i.e., $|R|\le n-1$,} and \rev{we have $x_i^t\ge x_1^t$ 
    for all $i\in R$,} see Lemma~\ref{lem:ordering}. 
    \rev{With this we conclude
    $$
      x_1^{t+1}=\frac{1}{|R|+1}\cdot\left(x_0^t+\sum_{i\in R}x_i^t \right)
      \ge \frac{1}{|R|+1}\cdot\left( x_1^t+\epsilon+\sum_{i\in R}x_1^t \right)=x_1^t+\frac{\epsilon}{|R|+1}
      \ge x_1^t+\frac{\epsilon}{n}.
    $$}
    After at most $n\cdot
    (\left\lceil\frac{1}{\epsilon}\right\rceil-3)\le
    \frac{n}{\epsilon}$ stages we can achieve
    $x_n^t-x_1^t\le\epsilon$. Then placing the control at
    $\frac{x_1^t+x_n^t}{2}$ yields the same opinion, which is also
    inside $[\epsilon,1-\epsilon]$, for all voters after at most
    $\frac{n}{\epsilon}+1$ stages, i.e., the first step is completed.
 
  In Step 2 we proceed as follows. If $x_1^t\in[l,r]$ nothing needs to be done. Due to symmetry 
  we assume $x_1^t<l$ in the following. If $l-x_1^t\ge \frac{\epsilon}{n+1}$ we place a control at
  $x_1^t+\epsilon$ so that $x_1^{t+1}=x_1^t+\frac{\epsilon}{n+1}$, since all voters influence voter $1$.
  After at most $(n+1)\epsilon$ stages we have $l-x_1^t< \frac{\epsilon}{n+1}$. In such a situation we 
  set the control to $x_1^t+(n+1)(l-x_1^t)$ such that $x_1^{t+1}=l$. 

  Applying Lemma~\ref{lem:ordering} again, we conclude that we can achieve $x_i^t=l$ after at most
  $\frac{2n+1}{\epsilon}+2$ for all $i\in I$. Taking the control as $x_i^t$ we can clearly preserve the 
  configuration to later stages.
\end{proof}

Thus, given enough time (number of stages) we could always achieve the upper bound of 
$|I|$ convinced voters. By setting $[l,r]=[1-\epsilon,1-\epsilon]$ and $x_i^0=\epsilon$, 
we see that the stated estimation gives the right order of magnitude in the worst case.

For the DG-model the upper bound on the time needed to convince 
all voters depends on the influence $w_{i0}$ of the control for each voter.
To this end we define $\omega=\min_{i\in I} w_{i0}$, i.e., the tightest possible 
lower bound on the influences. Since it may happen that no stable state is reached 
after a finite number of stages, we can only navigate the voters into an interval of 
length greater than zero. 

\begin{Lemma}
  Consider an instance of the DG-model with $0<\omega=\min_{i\in I} w_{i0}<1$, $\delta\in(0,1)$, and a position $p\in[0,1]$. It is possible to select suitable 
  controls at each stage such that after at most $\frac{\log(\delta)}{\log(1-\omega)}$ stages all $|I|$ voters have an opinion 
  within the interval $[p-\delta,p+\delta]$.
\end{Lemma}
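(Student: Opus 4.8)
The plan is to take the \emph{constant} control $u^t := p$ in every stage $t = 0, 1, \dots, N-1$ — which is admissible since $p \in [0,1]$ satisfies the control restriction — and to follow the single scalar quantity
\[
  d^t := \max_{i \in I} \abs{x_i^t - p},
\]
the largest distance of any voter's opinion from the target position. Because $x_i^0 = x_i^{\text{start}}$ and $p$ both lie in $[0,1]$, we start from the crude bound $d^0 \le 1$; this is the step that makes the final estimate independent of the starting configuration.

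First I would establish a one-stage contraction. Inserting $x_0^t = p$ into \eqref{eq:Opinion-Control-DeGroot} and using $\sum_{j \in I_0} w_{ij} = 1$, so that $p = p\sum_{j \in I_0} w_{ij}$, one gets
\[
  x_i^{t+1} - p = w_{i0} p + \sum_{j \in I} w_{ij} x_j^t - p \sum_{j \in I_0} w_{ij} = \sum_{j \in I} w_{ij}\,(x_j^t - p).
\]
Since $\sum_{j \in I} w_{ij} = 1 - w_{i0} \le 1 - \omega$, the triangle inequality yields $\abs{x_i^{t+1} - p} \le (1 - w_{i0})\, d^t \le (1-\omega)\, d^t$ for every $i \in I$, and taking the maximum over $i$ gives $d^{t+1} \le (1-\omega)\, d^t$. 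Iterating from $d^0 \le 1$ we obtain $d^t \le (1-\omega)^t$ for all $t \ge 0$.

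It then remains to count stages. As $0 < \omega < 1$ we have $\log(1-\omega) < 0$, and as $\delta \in (0,1)$ we have $\log \delta < 0$; hence $(1-\omega)^t \le \delta$ is equivalent — dividing by the negative quantity $\log(1-\omega)$ reverses the inequality — to $t \ge \frac{\log \delta}{\log(1-\omega)}$. Therefore, as soon as the number of stages reaches $\left\lceil \frac{\log \delta}{\log(1-\omega)} \right\rceil$ we have $d^t \le \delta$, i.e., $x_i^t \in [p-\delta, p+\delta]$ for all $i \in I$; moreover keeping the control at $p$ in all later stages preserves $d^s \le (1-\omega)^{\,s-t} d^t \le \delta$, so the voters stay in the interval.

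The whole argument is a Banach-type contraction estimate, so I do not expect a genuine obstacle; the only points requiring care are the use of the trivial initial bound $d^0 \le 1$ and getting the orientation of the logarithmic inequality right because $\log(1-\omega)$ is negative.
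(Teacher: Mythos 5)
Your proof is correct and follows essentially the same route as the paper: place the constant control $p$ at every stage and show by iteration that $\abs{x_i^t - p} \le (1-\omega)^t$, the only cosmetic difference being that you contract the max-distance $d^t$ with absolute values while the paper runs a two-sided induction giving $p-(1-\omega)^t \le x_i^t \le p+(1-\omega)^t$. The concluding logarithmic count (with due care for the sign of $\log(1-\omega)$) matches as well.
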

\begin{proof}
  By induction over $t$ we prove that we have $\left|x_i^t-p\right|\le (1-\omega)^t$ for all $i\in I$ and $t\in\mathbb{N}$, if we place the control
  at position $p$ at all stages. Since $x_i^t,p\in [0,1]$ we have $\left|x_i^0-p\right|\le (1-\omega)^0=1$ for all $i\in I$. For $t\ge 1$ we have
  \begin{eqnarray*}
    x_i^{t}&=&\sum_{j\in I} w_{ij}\cdot x_j^{t-1}\,+\,w_{i0}\cdot p
           \,\,\ge\,\, \sum_{j\in I} w_{ij}\cdot \left(p-(1-\omega)^{t-1}\right)\,+\,w_{i0}\cdot p\\
           &\ge& (1-\omega)\cdot \left(p-(1-\omega)^{t-1}\right)+\omega\cdot p\,\,=\,\, p-(1-\omega)^t
  \end{eqnarray*}
  for all $i\in I$. Similarly we conclude $x_i^{t}\le p+(1-\omega)^t$.
\end{proof}

Thus, given enough time (number of stages) we could always achieve the
upper bound of $|I|$ convinced voters if the conviction interval has a
length greater than zero. By setting $p=1$ and $x_i^0=0$, we see that
the stated estimation gives the right order of magnitude in the worst
case.  Using the Taylor expansion of $\log(1-\omega)$ and having an
influence that decreases with the number of voters in mind, we remark
that
\begin{equation*}
  \frac{\log(\delta)}{\log\!\left(1-\frac{1}{n+1}\right)}\le
  -(n+1)\cdot\log(\delta).
\end{equation*}

\section{Computational Information on Optimal Controls}
\label{sec:computational-info}


How can one find reliable information on optimal controls and their
resulting optimal achievements?  That is, for a special instance like
our benchmark instance, how can we find out, how many convinced voters
are achievable for a given horizon~$N$?  It is certainly not possible
to try all possible controls and pick the best -- there are uncountably
infinitely many feasible controls, because all elements in $[0, 1]^N$
constitute feasible controls.  On the other hand, some logical
constraints are immediate without enumeration: it is impossible to
achieve more convinced voters than there are voters.  

A common technique to supersede such a trivial statement without
complete enumeration is to devise a \emph{mathematical model} and find
solutions to it by \emph{exact methods}.  Exact methods depend on
generic logical reasoning or provably correct computational
information about the solution of a mathematical model.\footnote{An
example of generic logical reasoning can be seen in the previous
section.}  In this section, we use \emph{mixed integer linear
programming (MILP)} for modeling the DG and the BC optimal control
problems \rev{and \texttt{cplex} for solving them, see 
Table~\ref{tab:MILP-cplex-parameters} for the precise parameter settings}.

While the models are generically correct, concrete computational
results will only be given for our benchmark problem and related data.
One big advantage of MILP models is that there is commercial software
of-the-shelf that can provide solutions to such models regardless of
what they represent.  There is even academic free software that is
able to provide solutions stunningly fast.

In this section, we will not spell out the formulae of our models
explicitly.\footnote{Mathematically explicit descriptions, suitable
  for replicating our results, can be found in
  Appendix~\ref{sec:Mathematical-Model} in the appendix.}  Instead, we
try to emphasize the features of our approach.

First, an optimal solution to an MILP model is \emph{globally}
optimal.  That is, no better solution exists \emph{anywhere} in the
solution space.  Second, if an optimal solution to an MILP model was
reported by the solver software, we are \emph{sure} (within the bounds
of numerical accuracy) that it is an optimal solution, i.e., the
method is \emph{exact}.  Third, if an optimal solution to an MILP
could not be found in reasonable time, then very often we still obtain
\emph{bounds} on the value of an (otherwise unknown) optimal solution.
And fourth, as usual the process of constructing an MILP model is
\emph{non-unique}, i.e., usually there are many, substantially
different options to build an MILP model for the same problem, and one
may provide solutions faster or for larger instances than another.

We built an MILP model for the DG optimal control problem and two MILP
models for the (much more difficult) BC optimal control problem.

\subsection{Principle Ideas of the MILP models}
\label{sec:MILP-models}

The MILP model for the DG optimal control problem can be classified as
a \emph{straight-forward} MILP: the system dynamics is linear and fits
therefore the modeling paradigm of MILP very well.  The only little
complication is to model the number of convinced voters, which is a
non-linear, non-continuous function of the voters' opinions.  Since
binary variables are allowed in MILP, we can construct such functions
in many cases using the so-called ``Big-$M$ method.''  Details are
described in the Appendix~\ref{sec:MILP-DeGroot}.

An MILP model for the BC optimal control problem is \emph{not
  straight-forward} at all.  Since the system dynamics depends on
whether or not some voter is in the confidence interval of another, we
have to decide at some point whether or not two voters' distance is
either $\le \epsilon$ or $> \epsilon$.  It is another general feature
that strict inequalities cause trouble for MILP modeling, and the
distinction would be numerically unstable anyway (most MILP solvers
use floating-point arithmetic, see Section~\ref{sec:Numerics}).  Thus,
we refrained from trying to build a correct MILP model for the BC
optimal control problem.  Instead, we built two complementary MILP
models.  Without referring to the details, we again explain only the
features.  In the first MILP model, the \emph{lower-bound model}, any
feasible solution defines a feasible control, which achieves, when
checked with exact arithmetic, at least as many convinced voters as
predicted in the MILP model.  The second MILP model, the \emph{dual
  model}, is some kind of \emph{relaxation}: No control can convince
more voters than the number of convinced voters predicted for any of
its optimal solutions.  

This is implemented by using a \emph{safety margin} $\hat{\epsilon}$
for the confidence interval.  In the models, it is now required for
any feasible control that it leads, at all times, to differences
between any pair of voters that are either $\le \epsilon$ or
$\ge \epsilon + \hat{\epsilon}$.  If $\hat{\epsilon} > 0$, we obtain a
lower-bound model, since some originally feasible controls are
excluded because they lead to differences between voters that are too
close to the confidence radius~$\epsilon$.  If $\hat{\epsilon} \le 0$,
we obtain an \emph{upper-bound model} where the requirements for ``are
within distance~$\epsilon$'' and ``are at distance at
least~$\epsilon + \hat{\epsilon}$'' overlap so that the solution with
better objective function value can be chosen by the solver software.

Now, if we put together the information from \emph{both} models then
we can achieve more: If the optimal numbers of convinced voters
coincide in both models, then we have found the provably optimal
achievable number of convinced voters although we had no single model
for it.  Otherwise, we obtain at least upper and lower bounds.
Moreover, any lower bound for the number of convinced voters predicted
by the lower-bound model is a lower bound on the number of achievable
convinced voters, and any upper bound on the number of convinced
voters predicted by the upper-bound model is an upper bound on the number of
achievable convinced voters.

The first MILP model for the BC-model is a \emph{basic} model with a
compact number of variables along the lines of the DG MILP.  However,
the system dynamics is discontinuous this time, which requires a much
heavier use of the Big-$M$ method.  MILP-experience tells us that too
much use of the Big-$M$ method leads to difficulties in the solution
process.  Since the basic model did indeed not scale well to a larger
number of rounds, we engineered alternative models.

The resulting \emph{advanced} MILP model for
the BC optimal control problem has substantially more variables but
not so many Big-$M$ constructions.  Moreover, the advanced model uses
a perturbed objective function: Its integral part is the predicted
number of convinced voters, and one minus its fractional part
represents the average distance of the unconvinced voters to the
conviction interval.  This perturbation was introduced in order to
better guide the solution process.  The problem with the unperturbed
objective function is that many feasible controls unavoidably achieve
identical numbers of convinced voters because there are simply much
fewer distinct objective function values than controls; this is a
\emph{degenerate situation}, which is difficult to handle for the MILP
solver.  The perturbation guarantees that two distinct controls are
very likely to have distinct objective function values.

Our hypothesis was that the advanced model would be easier to solve,
which, to a certain extent, turned out to be true.  We know of no
other method to date that yields more \emph{provable and global}
information about optimal controls for the BC optimal control problem.

\subsection{Computational Results on the Benchmark Campaign Problem}
\label{sec:comp-results-campaign}

In the following we report on our computational results.
\revJ{We first compare the effectiveness of the two modeling approaches
before we restrict ourselves to the more successful
model.\footnote{Because of its special characteristics the advanced
  model was accepted for the benchmark suite
  MIPLIB~2010~\cite{Koch+Achterberg:MIPLIB2010:2011}.  Thus, the new
  model will automatically receive some attention by the developers of
  MILP solver software and MILP researchers, which may help to clarify
  the yet open cases.}}


The MILP for the DG-model was very effective.  It could be solved in
seconds for the benchmark problem with homogeneous weights.  Eleven
convinced voters are possible for any horizon, and, of course, no
more.  The control is non-crucial here, because homogeneous weights
lead to an immediate consensus in the conviction interval.  But also
for other weights, optimal solutions can be found for all horizons
very fast.  The real conclusion is that solving the DG optimal control
problem on the scale of our benchmark problem is easy, but there are
no mind-blowing observations about optimal controls.


Using the basic MILP revealed that the BC-model is in an
all different ball-park.
\begin{table}[h]
  \centering\footnotesize
  \begin{tabular}{rrrrrr}
    \toprule
    \# stages & optimal value & CPU time [s] & \# variables/binary/integer & \# constraints & \# non-zeroes\\
    \midrule
    1 & 3     &       0.01 &  \hphantom{0\,}2454/\hphantom{0\,}1738/\hphantom{0}572  &     3487 &     11\,330\\
    2 & 4     &       1.42 &  \hphantom{0\,}4776/\hphantom{0\,}3377/1122 &     6809 &     22\,319\\
    3 & 5     &     355.33 &  \hphantom{0\,}7098/\hphantom{0\,}5016/1672 &  10\,131 &     33\,308\\
    4 & 5--11 &    3600.00 &  \hphantom{0\,}9420/\hphantom{0\,}6655/2222 &  13\,453 &     44\,297\\
    5 & 4--11 &    3600.00 &  11\,742/\hphantom{0\,}8294/2772    &  16\,775 &     55\,286\\
    6 & 5--11 &    3600.00 &  14\,064/\hphantom{0\,}9933/3322    &  20\,097 &     66\,275\\
    7 & 3--11 &    3600.00 &  16\,386/11\,572/3872 &  23\,419 &     77\,264\\
    8 & 4--11 &    3600.00 &  18\,708/13\,211/4422 &  26\,741 &     88\,253\\
    9 & 0--11 &    3600.00 &  21\,030/14\,850/4972 &  30\,063 &     99\,242\\
    10& 0--11 &    3600.00 &  23\,352/16\,489/5522 &  33\,385 &    110\,231\\
    \bottomrule    
  \end{tabular}
  \caption{Results of the basic MILP model for the benchmark problem
    with a positive~$\hat{\epsilon}$ (provably feasible configurations);
    number of variables/constraints/non-zeroes for original problem
    before preprocessing; the time limit was 1h = 3600s;
    MacBook Pro 2013, 2.6\,GHz Intel Core i7, 16\,GB 1600\,MHz DDR3
    RAM, OS~X~10.9.2, \texttt{zimpl} 3.3.1, \texttt{cplex} 12.5
    (Academic Initiative License), branching
    priorities given according to stage structure.} 
  \label{tab:MILP-basic-results}
\end{table}
Table~\ref{tab:MILP-basic-results} shows the computational results for
our basic \revJ{lower-bound} model, \revJ{in particular},
with~$\hat{\epsilon} = 10^{-5} > 0$ yielding provably feasible
controls.\revJ{\footnote{We have added for each instance the typical
  information about the MILP problem scale, which is characterized by
  the number of variables, integrality requirements, and non-zero
  constraint coefficients in the model.  The higher these values the
  larger and harder the problem is considered in the MILP world.  Note
  that in addition to the obvious decision variables modeling the
  actual control opinion there is a large number of auxiliary
  variables necessary to achieve the correct logic. Conventional MILP
  solvers fall back into a guided enumeration (Branch-and-Bound) of
  discrete variables whenever they assess that no other information
  can be computed to their advantage. Branching priorities tell the
  MILP solver which integral variables should be enumerated first,
  second, etc.}} In order to really prove that no solutions with a
better objective (above the upper bound) exist, we would have to rerun
the computations with an $\hat{\epsilon} \le 0$.  We skipped this for
the basic model and did this only for the advanced model below, since
the information obtained by the advanced model is superior anyway.

\begin{figure}[htp]
  \centering
  \includegraphics[width=0.20\linewidth]{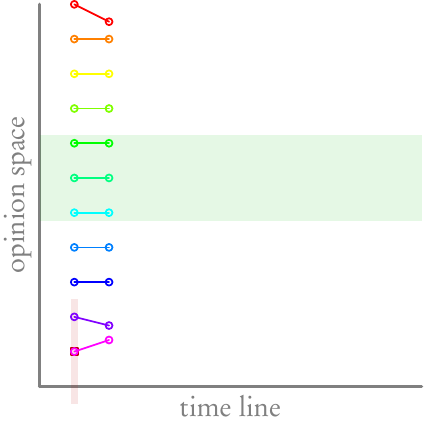}%
  \includegraphics[width=0.20\linewidth]{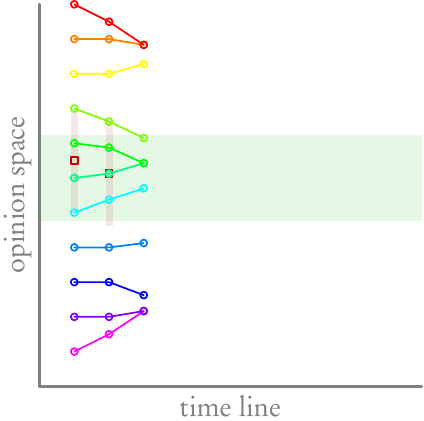}%
  \includegraphics[width=0.20\linewidth]{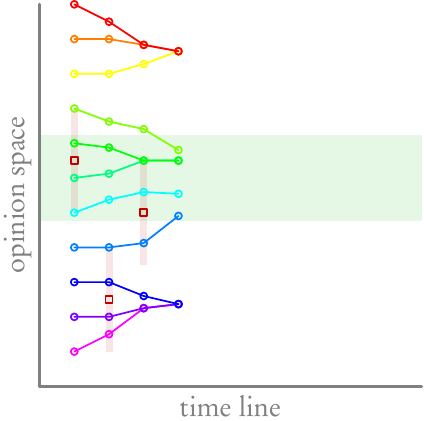}%
  \includegraphics[width=0.20\linewidth]{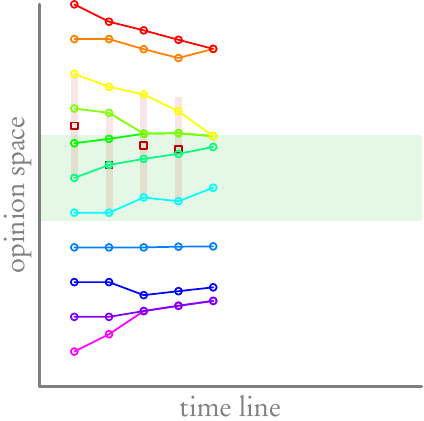}%
  \includegraphics[width=0.20\linewidth]{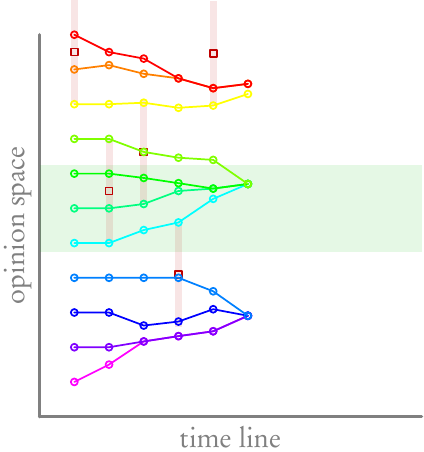}\\%
  \includegraphics[width=0.20\linewidth]{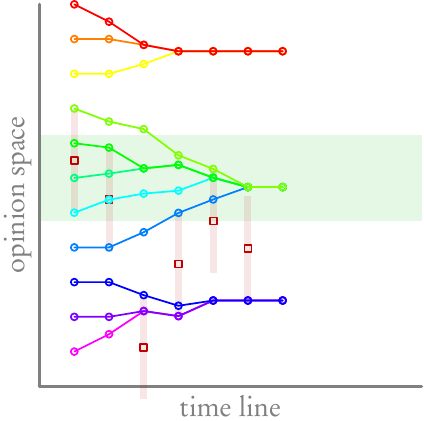}%
  \includegraphics[width=0.20\linewidth]{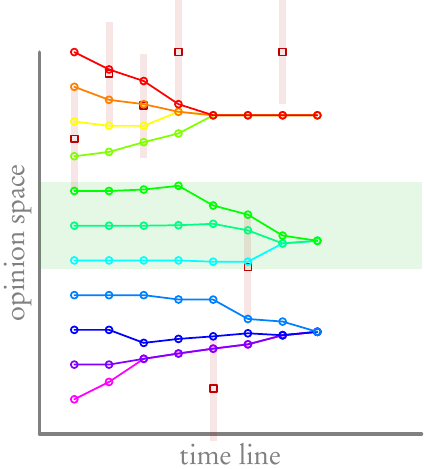}%
  \includegraphics[width=0.20\linewidth]{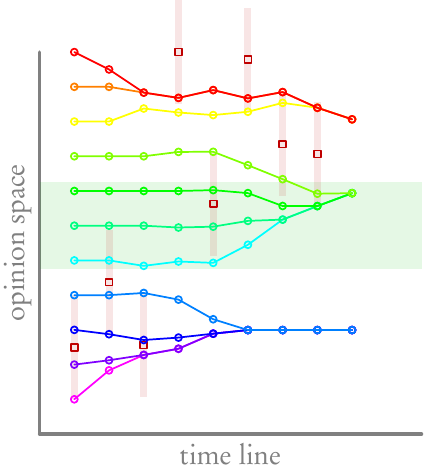}%
  \caption{Trajectories produced by solutions of the basic MILP (for
    $N \ge \revJ{4}$ we show the incumbent solution when the time
    limit of 1h was exceeded\revJ{, i.e., they only show the lower
      bounds on the optimal values reported in
      Table~\ref{tab:MILP-basic-results}}; for $N \ge 9$ no feasible
    solution was found in 1h\revJ{, thus, in those cases we learned no
      trajectories from the basic model}).}
  \label{fig:MILP-basic-trajectories}
\end{figure}
\revJ{The solutions of the basic lower-bound model determine for all
  stages, via the setting of the control and opinion variables,
  trajectories of opinions.  See Appendix~\ref{sec:MILP-DeGroot} for a
  detailed description of all variables. All feasible solutions of our
  lower-bound model correspond to the data of a BC-dynamics induced by
  the controls.  Thus, these trajectories are exactly the trajectories
  that would appear in a BC-simulation with the same controls.  The
  numbers of convinced voters induced by the trajectories always
  correspond to the lower values in the optimal value column of
  Table~\ref{tab:MILP-basic-results}.} \rev{The corresponding
  trajectories of the solutions of our basic lower-bound model are
  depicted in Figure~\ref{fig:MILP-basic-trajectories}. Here an empty
  red square represents the opinion of the control at that stage. The
  circles represent the opinions of the non-strategic agents. The
  range of influence for each control and the conviction interval are
  hinted by shaded regions.  One can clearly see that the structure of
  the optimal strategy heavily varies with~$N$. No conspicuous pattern
  is identifiable -- at least for us.}


\begin{table}[h]
  \centering\footnotesize
  \begin{tabular}{rrrrrr}
    \toprule
    \# stages & optimal value & CPU time [s] & \# variables/binary/integer & \# constraints & \# non-zeroes\\
    \midrule
    1 & 3.600               &       0.02 &    2367/\hphantom{1\,}2321/0 &    6261 &  62\,551\\ 
    2 & 4.615               &       0.18 &    4656/\hphantom{1\,}4576/0 & 11\,420 & 122\,736\\
    3 & 5.640               &       2.40 &    6945/\hphantom{1\,}6831/0 & 16\,579 & 182\,921\\
    4 & 6.653               &     113.91 &    9234/\hphantom{1\,}9086/0 & 21\,738 & 243\,106\\
    5 & 6.676               &    1420.28 & 11\,523/11\,341/0 & 26\,897 & 303\,291\\
    6 & 6.691--11.785       &    3600.00 & 13\,812/13\,596/0 & 32\,056 & 363\,476\\
    7 & 5.703--11.826       &    3600.00 & 16\,101/15\,851/0 & 37\,215 & 423\,661\\
    8 & 8.725--11.853       &    3600.00 & 18\,390/18\,106/0 & 42\,374 & 483\,846\\
    9 & 7.746--11.872       &    3600.00 & 20\,679/20\,361/0 & 47\,533 & 544\,031\\
   10 & 8.761--11.893       &    3600.00 & 22\,968/22\,616/0 & 52\,692 & 604\,216\\
    \bottomrule    
  \end{tabular}
  \caption{Results of the advanced MILP model for the benchmark
    problem with an $\hat{\epsilon} = 10^{-5} > 0$ (provably feasible
    configurations); 
    number of variables/constraints/non-zeroes for original problem
    before preprocessing; the time limit was 1h = 3600s;
    MacBook Pro 2013, 2.6\,GHz Intel Core i7, 16\,GB 1600\,MHz DDR3
    RAM, OS~X~10.9.2, \texttt{zimpl} 3.3.1, \texttt{cplex} 12.5
    (Academic Initiative License), branching priorities given
    according to stage structure. \emph{Remark:} For a time limit of
    24h, $6$ stages can be solved to optimality (optimal value $6.697$)
    and for $10$ stages we obtain a solution with $11$ convinced voters
    (optimal value $11.800--11.840$).}
  \label{tab:MILP-advanced-results}
\end{table}
Table~\ref{tab:MILP-advanced-results} shows the computational results
for our advanced \revJ{lower-bound }model, \revJ{in particular, }with an
$\hat{\epsilon} = 10^{-5} > 0$, i.e., all obtained configurations are
feasible, i.e., the given controls provably produce this objective
function (within the numerical accuracy).  For $N = 6$ and larger,
\texttt{cplex} \cite{cplex:2014} could not find an upper bound with
fewer than $11$ convinced voters in one hour.  For a time limit of
$24h$, however, the optimum was determined for $N = 6$, and the
optimal number $11$ of convinced voters for $N = 10$ could be found.
Moreover, a better feasible control with objective $6.707$ for $N = 7$
and of $8.767$ for $N = 9$, respectively, could be computed in less
than 1h = 3600s on a faster computer\footnote{Mac Pro 2008
  2$\times$2.8\,GHz Quad-Core Intel Xeon, 21\,GB 800\,MHz DDR2 RAM,
  OS~X~10.9.5, \texttt{zimpl} 3.3.1, \texttt{cplex} 12.5 (Academic
  Initiative License}

The instance for $N =
10$ is special in the sense that the trivial bound of $11$ convinced
voters is sufficient to prove the ``voter''-optimality of a solution
with $11$ convinced voters.  It is yet an open problem to find a
configuration that provably maximizes the perturbed objective function
of the advanced model.
\begin{table}[h]
  \centering\footnotesize
  \begin{tabular}{rrrrrr}
    \toprule
    \# stages & optimal value & CPU time [s] & \# variables/binary/integer & \# constraints & \# non-zeroes\\
    \midrule
    1 & 3.600               &       0.06 &    2367/\hphantom{1\,}2321/0 &    6261 &  62\,551\\ 
    2 & 4.615               &       0.44 &    4656/\hphantom{1\,}4576/0 & 11\,420 & 122\,736\\
    3 & 5.640               &       7.84 &    6945/\hphantom{1\,}6831/0 & 16\,579 & 182\,921\\
    4 & 6.657               &     303.81 &    9234/\hphantom{1\,}9086/0 & 21\,738 & 243\,106\\
    5 & 6.668--11.743       &    3600.00 & 11\,523/11\,341/0 & 26\,897 & 303\,291\\
    6 & 5.697--11.810       &    3600.00 & 13\,812/13\,596/0 & 32\,056 & 363\,476\\
    7 & 5.701--11.864       &    3600.00 & 16\,101/15\,851/0 & 37\,215 & 423\,661\\
    8 & 7.729--11.882       &    3600.00 & 18\,390/18\,106/0 & 42\,374 & 483\,846\\
    9 & 8.741--11.896       &    3600.00 & 20\,679/20\,361/0 & 47\,533 & 544\,031\\
   10 & 8.762--11.908       &    3600.00 & 22\,968/22\,616/0 & 52\,692 & 604\,216\\
    \bottomrule    
  \end{tabular}
  \caption{Results of the advanced MILP model for the benchmark
    problem with an $\hat{\epsilon} = -10^{-5} < 0$ (capturing all
    feasible and possibly some infeasible configurations); 
    number of variables/constraints/non-zeroes for original problem
    before preprocessing; the time limit was 1h = 3600s;
    MacBook Pro 2013, 2.6\,GHz Intel Core i7, 16\,GB 1600\,MHz DDR3
    RAM, OS~X~10.9.2, \texttt{zimpl} 3.3.1, \texttt{cplex} 12.5
    (Academic Initiative License), branching priorities given
    according to stage structure.}
  \label{tab:MILP-advanced-results_DB}
\end{table}
Table~\ref{tab:MILP-advanced-results_DB} shows the computational
results for our advanced upper-bound model, in particular, with an
$\hat{\epsilon} = -10^{-5} < 0$, i.e., the obtained configurations may
be infeasible, i.e., the listed objective function values may be
different than the results of the true BC-dynamics applied to the
computed controls.  However, the set-up guarantees that no feasible
configurations exist that produce better objective function values.
It is apparent that this time not even for $N=5$ the optimal value of
the upper-bound model could be found in 1h.  This can be explained:
Since in the upper-bound model a solution has more freedom to classify
``inside confidence interval: yes or no'', it is harder for the solver
to prove that high objective function values are impossible.  On a
faster computer\footnote{Mac Pro 2008 2$\times$2.8\,GHz Quad-Core
  Intel Xeon, 21\,GB 800\,MHz DDR2 RAM, OS~X~10.9.5, \texttt{zimpl}
  3.3.1, \texttt{cplex} 12.5 (Academic Initiative License} we obtained
for the upper-bound model optimal values of $6.676$ for $N=5$ in 2649s
and $6.700$ for $N=6$ in 117\,597s, respectively.  Thus, we know that
more than six voters are neither possible in five nor in six stages.

\begin{table}[h]
  \centering\footnotesize
    \begin{tabular}{rrr}
      \toprule
      \# stages & \multicolumn{2}{c}{objective of an optimal BC-control} \\
      &           after 1h CPU time & all we know from MILP\\
      \midrule
      1     &3.600                   &3.600         \\ 
      2     &4.615                   &4.615         \\
      3     &5.640                   &5.640         \\
      4     &6.653--\hphantom{1}6.657&6.653--\hphantom{1}6.657 \\
      5     &6.676--11.743           &\textcolor{blue}{6.676} \\
      6     &6.691--11.810           &6.691--\hphantom{1}\textcolor{blue}{6.700} \\
      7     &5.703--11.864           &\textcolor{blue}{6.707}--11.864 \\
      8     &8.725--11.882           &8.725--11.882 \\
      9     &7.746--11.896           &\textcolor{blue}{8.767}--11.896 \\ 
      10    &8.761--11.908           &\textcolor{blue}{11.800}--11.908 \\ 
      \bottomrule    
    \end{tabular}
    \caption{Summary of the knowledge that the advanced MILP model
      could collect for the benchmark problem: first, what we know
      after 1h time limit per computation and second, what we know at
      all (selected extra computations in \textcolor{blue}{blue};
      tightening the upper bound for $7 \le N \le 9$ would have been
      of paramount interest but was out of reach
      for us, thus no new figures there).
      \label{tab:MILP-advanced-results_summary}}
\end{table}
Table~\ref{tab:MILP-advanced-results_summary} summarizes the results
that the advanced model can generate in 1h per computation by
combining the results of the lower-bound and upper-bound models.  For
up to $3$ stages the optimal value of an optimal BC-control was
found. Regarding the number of achievable voters only (the integral
part of the objective function), we know that in $4$ stages six voters
are possible, but no more.  There is, however, a small gap between the
optimal values of the lower-bound and the upper-bound model.  For $5$
and more stages, the upper bound could not pushed below the trivial
bound $11$ in 1h computation time.

\begin{figure}[htp]
  \centering
  \includegraphics[width=0.20\linewidth]{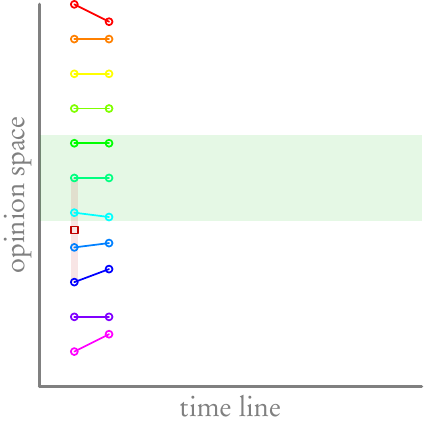}%
  \includegraphics[width=0.20\linewidth]{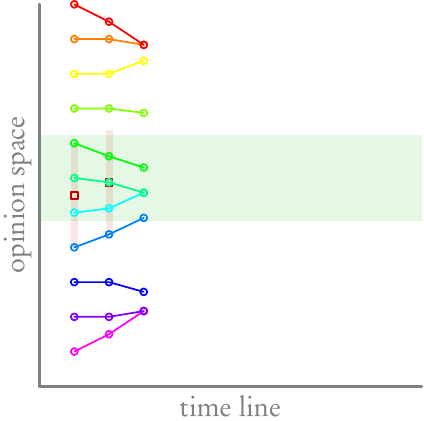}%
  \includegraphics[width=0.20\linewidth]{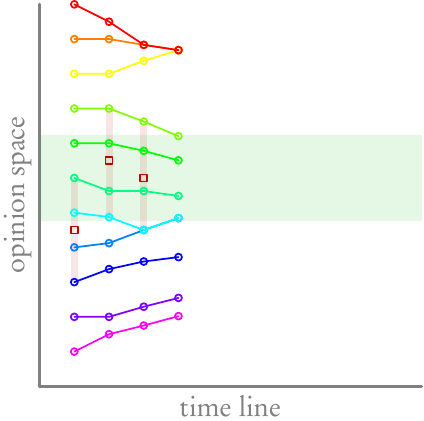}%
  \includegraphics[width=0.20\linewidth]{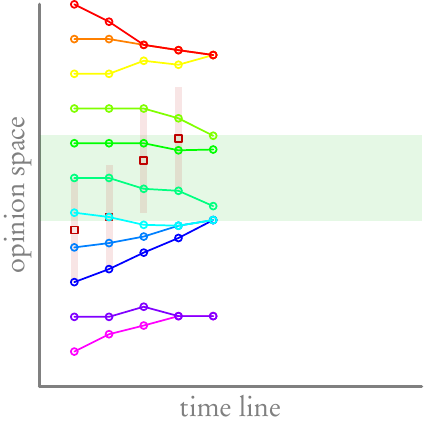}%
  \includegraphics[width=0.20\linewidth]{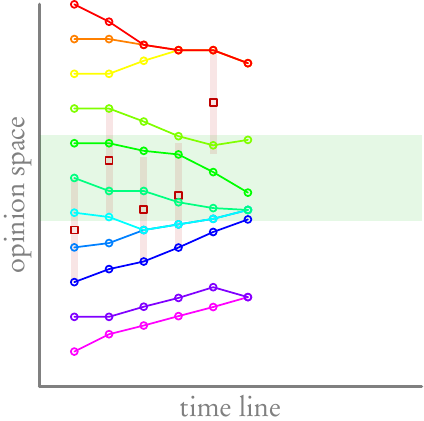}\\%
  \includegraphics[width=0.20\linewidth]{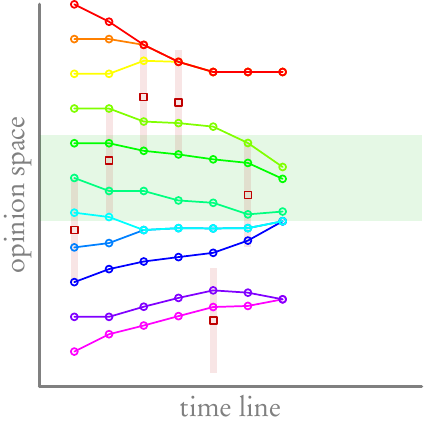}%
  \includegraphics[width=0.20\linewidth]{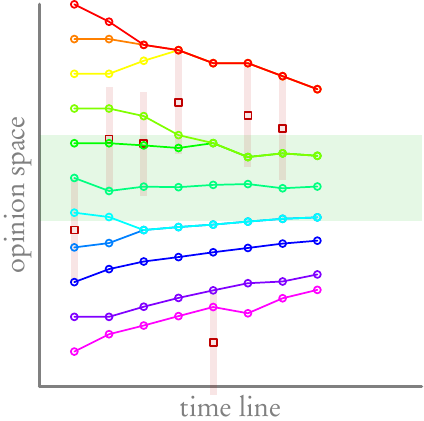}%
  \includegraphics[width=0.20\linewidth]{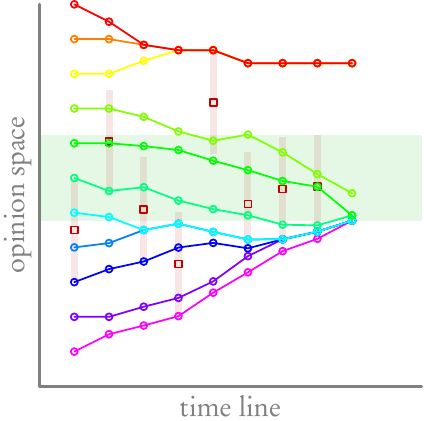}%
  \includegraphics[width=0.20\linewidth]{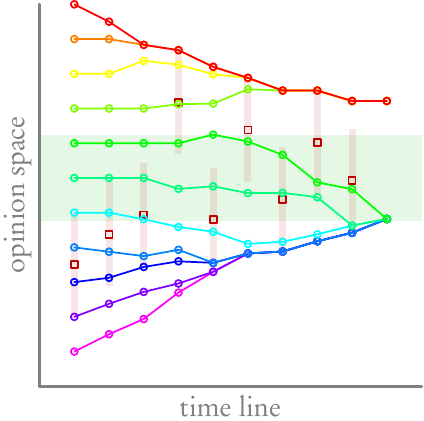}%
  \includegraphics[width=0.20\linewidth]{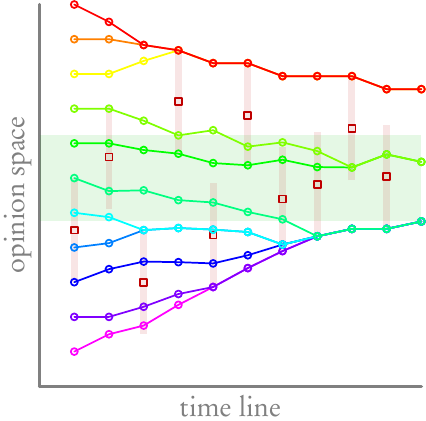}%
  \caption{Trajectories produced by solutions of the advanced
    lower-bound MILP for $\hat{\epsilon} = 10^{-5} > 0$ (provably
    feasible configurations); for $N \ge 6$ we show the incumbent
    solutions when the time limit of 1h was exceeded}
  \label{fig:MILP-advanced-trajectories}
\end{figure}

\begin{figure}[tp]
  \centering
  \includegraphics[width=0.20\linewidth]{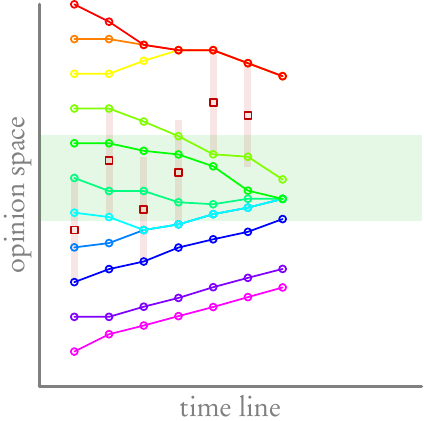}%
  \includegraphics[width=0.20\linewidth]{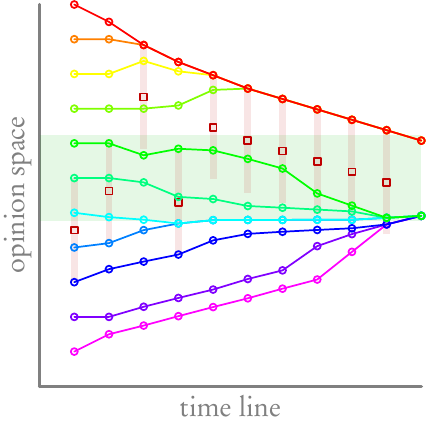}%
  \caption{Trajectories produced by solutions of the advanced
    lower-bound MILP for $\hat{\epsilon} = 10^{-5} > 0$ (provably
    feasible configurations) for 6 (optimal objective) and 10 (optimal
    number of convinced voters) stages in a time limit of 24h.}
  \label{fig:MILP-advanced-trajectories-extra}
\end{figure}

\begin{figure}[tp]
  \centering
  \includegraphics[width=0.20\linewidth]{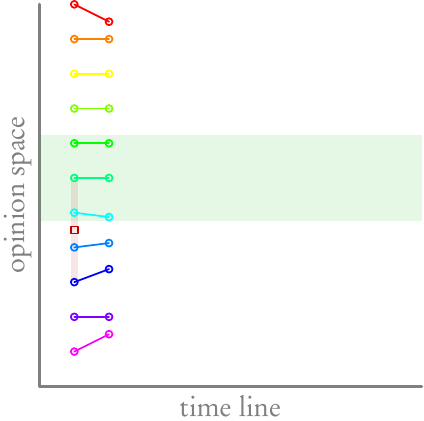}%
  \includegraphics[width=0.20\linewidth]{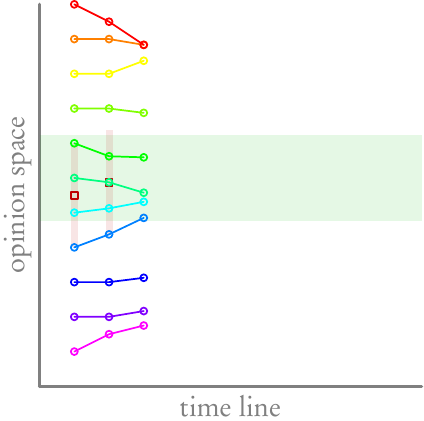}%
  \includegraphics[width=0.20\linewidth]{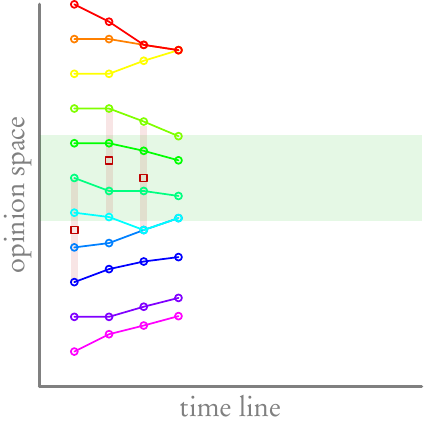}%
  \includegraphics[width=0.20\linewidth]{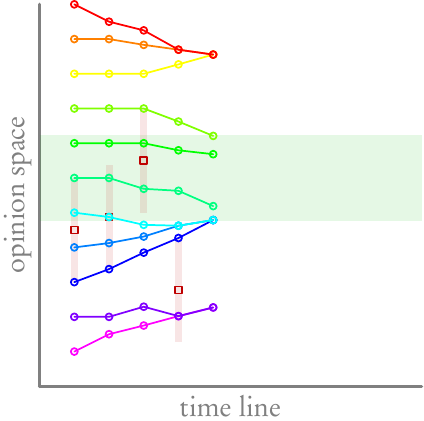}%
  \includegraphics[width=0.20\linewidth]{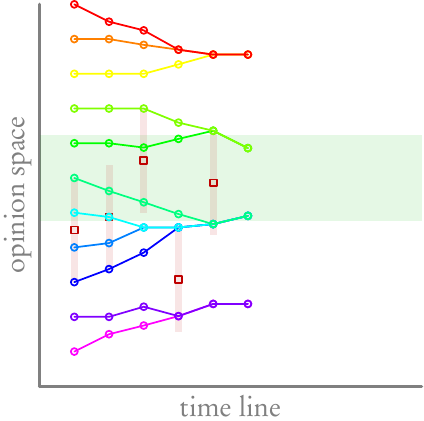}\\%
  \includegraphics[width=0.20\linewidth]{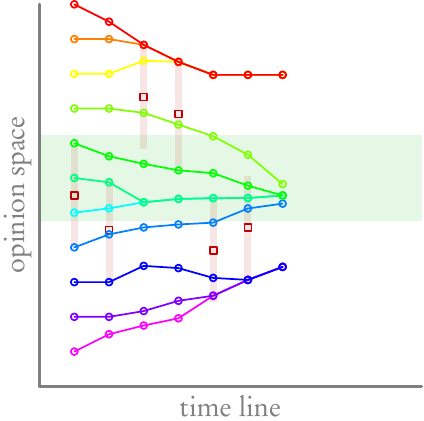}%
  \includegraphics[width=0.20\linewidth]{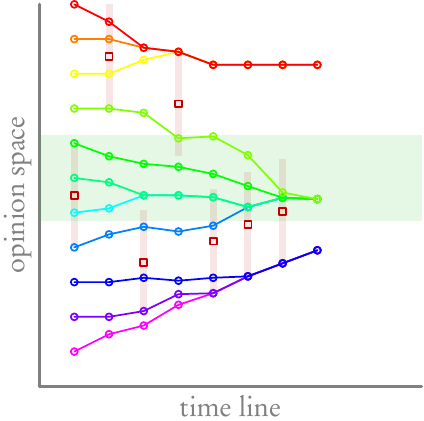}%
  \includegraphics[width=0.20\linewidth]{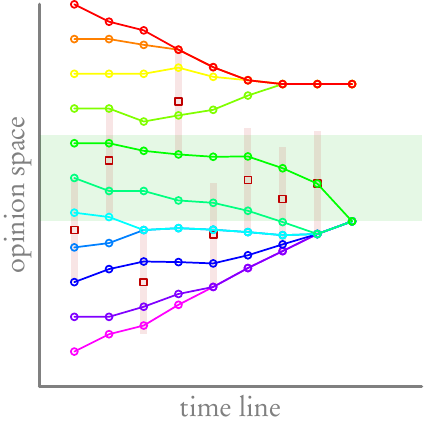}%
  \includegraphics[width=0.20\linewidth]{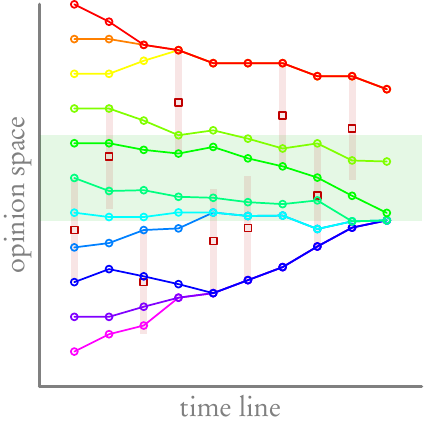}%
  \includegraphics[width=0.20\linewidth]{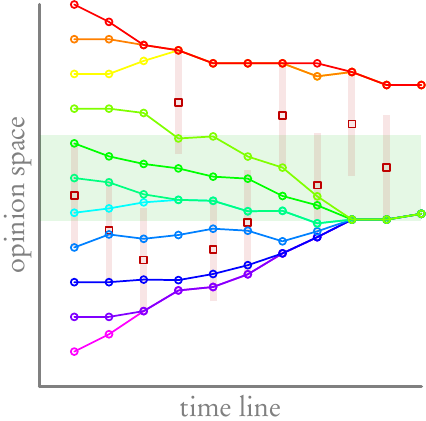}%
  \caption{Trajectories produced by solutions of the advanced
    upper-bound MILP for $\hat{\epsilon} = -10^{-5} < 0$ (capturing
    all feasible and possibly some infeasible configurations); for
    $N \ge 5$ we show the incumbent solution when the time limit of 1h
    was exceeded; note the obviously infeasibly splitting trajectories
    for $N = 10$ stages for voters $1$ and~$2$ in stage~$7$.}
  \label{fig:MILP-advanced-trajectories_DB}
\end{figure}

We see that MILP modeling requires a lot of effort.  We know, however,
of no other method to date that can prove global optimality of a
control for $N = 6$ or larger.

Figures~\ref{fig:MILP-advanced-trajectories}
and~\ref{fig:MILP-advanced-trajectories-extra} as well as
Figure~\ref{fig:MILP-advanced-trajectories_DB} visualize trajectories.
\revJ{As with the basic model, the optimal solution found by the MILP
  solver or, in the case of a timeout, the best feasible incumbent
  solution, determine, via the values of corresponding variables,
  control values and opinion values for all voters in all stages.  See
  Appendix~\ref{sec:MILP-Bounded-Confidence} for a detailed
  description of all variables.\footnote{Note that the voters' opinion
    values have \emph{not} been taken from the results of a
    BC-simulation on the basis of the control values; they have rather
    been taken from the MILP solution vector directly.  This way we
    can better see the differences between solutions to the lower- and
    the upper-bound model: for the indicated control, the former
    always produces trajectories that a BC-simulation would have
    produced, whereas the latter may produce trajectories that are
    impossible in a BC-simulation.}  Again, the numbers of convinced
  voters induced by the trajectories always correspond to the lower
  values in the optimal value column of Tab
  Table~\ref{tab:MILP-advanced-results_DB} for the upper-bound model.
  The upper values in those tables are produced by involved
  mathematical computations from MILP solving technology (e.g., formal
  model relaxations). Therefore, they do not correspond to any
  underlying trajectories.\footnote{Recall that in the summarizing
    table~\ref{tab:MILP-advanced-results_summary} the \emph{lower}
    values stem from \emph{lower} bounds (derived from feasible
    solutions) to the \emph{lower}-bound model and the \emph{upper}
    values stem from \emph{upper} bounds (derived by MILP machinery)
    for the \emph{upper}-bound model.  Thus, the numbers of convinced
    voters induced by the trajectories in
    Figure~\ref{fig:MILP-advanced-trajectories_DB} that stem from
    \emph{lower} bounds to the \emph{upper}-bound model do not appear
    at all in Table~\ref{tab:MILP-advanced-results_summary}.}}
Figures~\ref{fig:MILP-advanced-trajectories}
and~\ref{fig:MILP-advanced-trajectories-extra} show trajectories
induced by the provably feasible solutions of the advanced lower-bound
model with $\hat{\epsilon} = 10^{-5} > 0$.
Figure~\ref{fig:MILP-advanced-trajectories_DB} shows the (possibly
infeasible) solutions of the advanced upper-bound model for
$\hat{\epsilon} = -10^{-5} < 0$; infeasibility is best seen for 10
stages where trajectories split, which is impossible for feasible
trajectories.

\revJ{Splitting trajectories in the upper-bound model illustrate the
  fact that opinion trajectories that are feasible solutions for the
  upper-bound model\footnote{Recall, this is characterized by a
    \emph{negative} safety margin $\hat{\epsilon} < 0$, see
    Section~\ref{sec:MILP-models} for the idea
    and~\ref{sec:MILP-Bounded-Confidence} for the full details.}  are
  not necessarily feasible trajectories of opinions under the
  BC-dynamics.  Trajectories coming from the upper-bound model can,
  e.g., split: For identical opinions $i$ and~$j$ whose distance from
  opinion~$k$ is between $\epsilon + \hat{\epsilon} < \epsilon$ and
  $\epsilon$ the optimization algorithm may choose that $i$ is in the
  confidence interval of~$k$ but $j$ is not.  This can lead to
  different opinions of $i$ and~$j$ in the upcoming stage. This
  contrasts the situation in the lower-bound model.\footnote{Recall,
    this is characterized by a \emph{positive} safety margin
    $\hat{\epsilon} > 0$.}  The lower-bound model is designed in such
  a way that \emph{all} solutions with opinions at a distance between
  $\epsilon$ and $\epsilon + \hat{\epsilon} > \epsilon$ are infeasible
  for it.\revJ{\footnote{Recall, that this renders all `cutting-edge'
      solutions infeasible in the lower-bound model although in fact
      some of them may still be feasible trajectories in the
      BC-dynamics.}}}

What can we learn from the provably optimal solutions?  We see that
\begin{itemize}
\item some controls near the end of the horizon just decrease the
  average distance of non-convinced voters to the conviction
  interval, i.e., they are induced by the perturbation of the
  objective function
\item the controls of the earlier stages, however, always try to pull
  a voter as far as possible, i.e., they are exactly at confidence
  distance to some selected voter
\end{itemize}
The second observation will be exactly what is used in
Section~\ref{sec:Heuristics} as an idea for a clever heuristics.

\subsection{Computational Results on Random Instances}
\label{sec:comp-results-rand}

Of course, the choice of the particular benchmark problem is
completely arbitrary.  We want to provide additional evidence for the
observation that DG control is easier than BC control and that the
advanced model performs better than the basic one. To this end, we
tested all models on random instances.  We chose a uniform
distribution of start opinions in the unit interval.  The conviction
interval was constructed by choosing a \emph{party opinion~$o$} uniformly
at random in the unit interval and a \emph{conviction distance~$\Delta$}
uniformly at random between 0.1 and~0.2.  The conviction interval then
contained all values with a distance at most the conviction distance
from the party opinion, cut-off at zero and one, respectively.  The
confidence distance $\epsilon$ was chosen uniformly at random between
0.1 and~0.2 as well.  We drew five samples of random data and ran
\texttt{cplex} \cite{cplex:2014} with a time limit of one hour on the
resulting instances with one up to ten stages.  \revJ{See
Table~\ref{tab:SampledData} for the data realizations of the five
samples used.}

\revJ{
\begin{table}[h]
  \centering\tiny
  \begin{tabular}{rlllll}
    \toprule
    Sample & 1 & 2 & 3 & 4 & 5\\
    \midrule
    \multicolumn{6}{l}{Start Opinions}\\
    \midrule
    1
    &\textsf{0.0001143810805199624}&\textsf{0.02592622792020585}&\textsf{0.0707248803392809}&\textsf{0.1726953250292445}&\textsf{0.05518012076038404}\\
    2
    &\textsf{0.09233859556083068}&\textsf{0.1850820815155939}&\textsf{0.121328579290148}&\textsf{0.2160895006302953}&\textsf{0.08982103413199563}\\
    3
    &\textsf{0.1281244478021107}&\textsf{0.3205364363548663}&\textsf{0.2909047436646429}&\textsf{0.5472322533715591}&\textsf{0.2067191540744898}\\
    4
    &\textsf{0.146755892584742}&\textsf{0.3303348203958791}&\textsf{0.4370619401887669}&\textsf{0.5975562058383497}&\textsf{0.2219931714287943}\\
    5
    &\textsf{0.2360889763189687}&\textsf{0.4203678016598261}&\textsf{0.5108276010748994}&\textsf{0.609035598255935}&\textsf{0.3637368954633681}\\
    6
    &\textsf{0.3023325678199372}&\textsf{0.4353223932989226}&\textsf{0.5507979045041831}&\textsf{0.6977288244985344}&\textsf{0.4884111901019726}\\
    7
    &\textsf{0.417021998534217}&\textsf{0.4359949027271929}&\textsf{0.5693113258037044}&\textsf{0.7148159946582316}&\textsf{0.6117438617189749}\\
    8
    &\textsf{0.7203244894557456}&\textsf{0.4847490963257731}&\textsf{0.7081478223456413}&\textsf{0.8556209450717133}&\textsf{0.831327840180911}\\
    9
    &\textsf{0.9325573614175797}&\textsf{0.5496624760678183}&\textsf{0.8399490424990534}&\textsf{0.9006214549067946}&\textsf{0.8707323036786941}\\
    10
    &\textsf{0.9971848083653452}&\textsf{0.9315408638053435}&\textsf{0.8929469580512835}&\textsf{0.9670298385822749}&\textsf{0.9186109045796587}\\
    11
    &\textsf{0.9990405156274885}&\textsf{0.9477306110662712}&\textsf{0.8962930913307454}&\textsf{0.9726843540493129}&\textsf{0.9794449978460197}\\
    \midrule
    \multicolumn{6}{l}{Parameters}\\
    \midrule
    $\epsilon$
    &\textsf{0.1387910740307511}&\textsf{0.1698862689477127}&\textsf{0.1040630737561879}&\textsf{0.1224505926767482}&\textsf{0.1354138042860231}\\
    $o$
    &\textsf{0.3965807262334462}&\textsf{0.1544266755586552}&\textsf{0.01874801028025057}&\textsf{0.1414641728954073}&\textsf{0.3967366065822394}\\
    $\Delta$
    &\textsf{0.1186260211324845}&\textsf{0.1204648636096308}&\textsf{0.1040630737561879}&\textsf{0.1976274454960663}&\textsf{0.1765907860306536}\\
    \bottomrule
  \end{tabular}
  \caption{Random data, sampled with 16 significant decimal digits precision.}
  \label{tab:SampledData}
\end{table}
}

Table~\ref{tab:results-random} shows the results for an
$\hat{\epsilon} = 10^{-5} > 0$; we again skip the upper-bound computation
with $\hat{\epsilon} = -10^{-5} < 0$.  One very interesting phenomenon
can be identified in Sample~5: the number of convincible voters is
\emph{not} monotonically increasing with the number of stages
available for control.  There can be some unavoidable `distraction'
caused by other voters.  While in two stages we can achieve five
convinced voters, in three stages no more than three are possible.
This is some more evidence for the assessment that bounded-confidence
dynamics can lead to the emergence of various counter-intuitive
structures that make the dynamic system appear random and erratic
though it actually is deterministic.

\begin{table}[p]
  \centering\footnotesize
  \begin{tabular}{r@{\hspace*{10mm}}rr@{\hspace*{10mm}}rr@{\hspace*{10mm}}rr}
    \toprule
    \# stages & \multicolumn{2}{c@{\hspace*{10mm}}}{DG} &
    \multicolumn{2}{c@{\hspace*{10mm}}}{BC (basic)} &
    \multicolumn{2}{c}{BC (advanced)} \\
    & opt.~val. & CPU~[s] & opt.~val. & CPU~[s] &
    opt.~val. & CPU~[s]\\
    \midrule
    \multicolumn{7}{l}{Sample 1}\\
    \midrule
    1 & 11 & $<$0.01 & 2        &    0.01 & 2.48         & 0.03\\
    2 & 11 & $<$0.01 & 2        &    1.77 & 2.49         & 0.22\\
    3 & 11 & $<$0.01 & 2        &  190.56 & 2.51         & 3.62\\
    4 & 11 & 0.01    & 3--11    & 3600.00 & 3.49         & 114.40\\
    5 & 11 & 0.01    & 3--11    & 3600.00 & 3.51         & 1754.67\\
    6 & 11 & 0.01    & 1--11    & 3600.00 & 3.52--\hphantom{1}8.62  & 3600.00\\
    7 & 11 & 0.01    & 3--11    & 3600.00 & 3.53--11.60 & 3600.00\\
    8 & 11 & 0.01    & $-\infty$--11    & 3600.00 & 3.54--11.76 & 3600.00\\
    9 & 11 & 0.01    & $-\infty$--11    & 3600.00 & 3.55--11.78 & 3600.00\\
    10 &11 & 0.02    & $-\infty$--11    & 3600.00 & 2.56--11.81 & 3600.00\\
    \midrule
    \multicolumn{7}{l}{Sample 2}\\
    \midrule
    1 & 0 & $<$0.01 & 2         &    0.01 & 2.72         &    0.02\\
    2 & 0 & $<$0.01 & 2         &    3.30 & 2.73         &    0.18\\
    3 & 0 & $<$0.01 & 2         &  378.88 & 2.73         &    2.02\\
    4 & 0 & $<$0.01 & 2--11     & 3600.00 & 2.73         &   87.14\\
    5 & 0 & $<$0.01 & 1--11     & 3600.00 & 2.74         & 1558.18\\
    6 & 0 & $<$0.01 & 0--11     & 3600.00 & 2.75--\hphantom{1}9.79  & 3600.00\\
    7 & 0 & $<$0.01 & 1--11     & 3600.00 & 2.76--\hphantom{1}9.83  & 3600.00\\
    8 & 0 & $<$0.01 & $-\infty$--11     & 3600.00 & 2.76--11.85 & 3600.00\\
    9 & 0 & $<$0.01 & 0--11     & 3600.00 & 9.80--11.87 & 3600.00\\
    10 &0 & $<$0.01 & $-\infty$--11     & 3600.00 & 9.81--11.88 & 3600.00\\
    \midrule
    \multicolumn{7}{l}{Sample 3}\\
    \midrule
    1 & 0 & $<$0.01 & 2         &    0.01 & 2.53 & 0.01\\
    2 & 0 & $<$0.01 & 2         &    2.04 & 2.54 & 0.05\\
    3 & 0 & $<$0.01 & 3         &  233.37 & 3.54 & 0.30\\
    4 & 0 & $<$0.01 & 2--11     & 3600.00 & 3.54 & 2.30\\
    5 & 0 & $<$0.01 & 3--11     & 3600.00 & 3.55 & 53.41\\
    6 & 0 & $<$0.01 & 3--11     & 3600.00 & 3.55 & 985.32\\
    7 & 0 & $<$0.01 & 3--11     & 3600.00 & 3.56--\hphantom{1}3.61 & 3600.00 \\
    8 & 0 & $<$0.01 & $-\infty$--11     & 3600.00 & 3.56--\hphantom{1}8.69 & 3600.00 \\
    9 & 0 & $<$0.01 & 3--11     & 3600.00 & 3.56--\hphantom{1}8.67 & 3600.00 \\
    10 &0 & $<$0.01 & $-\infty$--11     & 3600.00 & 3.57--\hphantom{1}8.75 & 3600.00 \\
    \midrule
    \multicolumn{7}{l}{Sample 4}\\
    \midrule
    1 & 0 & $<$0.01 & 2         &    0.01 & 2.49 & 0.01\\
    2 & 0 & $<$0.01 & 2         &    1.67 & 2.49 & 0.05\\
    3 & 0 & $<$0.01 & 2         &   89.49 & 2.50 & 0.30\\
    4 & 0 & $<$0.01 & 2--11     & 3600.00 & 2.51 & 2.30\\
    5 & 0 & $<$0.01 & 2--11     & 3600.00 & 2.51 & 24.45\\
    6 & 0 & $<$0.01 & 2--11     & 3600.00 & 2.52 & 278.52\\
    7 & 0 & $<$0.01 & 2--11     & 3600.00 & 2.53 & 1281.73\\
    8 & 0 & $<$0.01 & 2--11     & 3600.00 & 2.54--\hphantom{1}7.59 & 3600.00\\
    9 & 0 & $<$0.01 & $-\infty$--11     & 3600.00 & 2.54--\hphantom{1}7.67 & 3600.00\\
    10 &0 & $<$0.01 & $-\infty$--11     & 3600.00 & 2.55--11.71 & 3600.00\\
    \midrule
    \multicolumn{7}{l}{Sample 5}\\
    \midrule
    1 &  8 & $<$0.01 & 3        &    0.01 & 3.63         &    0.02\\
    2 & 11 & $<$0.01 & 5        &    1.57 & 5.64         &    0.17\\
    3 & 11 & $<$0.01 & 3        &   89.57 & 3.66         &    1.51\\
    4 & 11 &    0.01 & 7--11    & 3600.00 & 7.67         &    5.39\\
    5 & 11 &    0.01 & 7--11    & 3600.00 & 7.69         &   36.32\\
    6 & 11 &    0.01 & 3--11    & 3600.00 & 7.70         &  456.57\\
    7 & 11 &    0.01 & 7--11    & 3600.00 & 7.72         & 2590.83\\
    8 & 11 &    0.01 & 7--11    & 3600.00 & 7.73--11.79  & 3600.00\\
    9 & 11 &    0.01 & 7--11    & 3600.00 & 7.74--11.83  & 3600.00\\
    10 &11 &    0.01 & $-\infty$--11    & 3600.00 & 7.75--11.86  & 3600.00\\
    \bottomrule   
  \end{tabular}
  \caption{Results of the lower-bound MILP models on random instances;
    the time limit was 1h = 3600.00s; 
    MacBook Pro 2013, 2.6\,GHz Intel Core i7, 16\,GB 1600\,MHz DDR3
    RAM, OS~X~10.9.2, \texttt{zimpl} 3.3.1, \texttt{cplex} 12.5
    (Academic Initiative License), branching priorities given
    according to stage structure.} 
  \label{tab:results-random}
\end{table}

\section{Heuristics to Find Good Controls}
\label{sec:Heuristics}

In the previous section we have described a MILP-formulation of our
problem. So in principle one could solve every problem instance by
standard of-the-shelf software like \texttt{cplex}
\cite{cplex:2014}. In contrast to the DG-model, where we could solve
our benchmark problem for any number of stages between between $1$ and
$10$ without any difficulty, the instances from the BC-model are
harder. Using the MILP-formulation of the previous section we were
only able to determine the optimal control up to $6$ stages using
\texttt{cplex}.

The approach using a MILP-formulation has the great advantage that we
receive dual, i.e., upper, bounds for the optimal control. For the
lower-bound direction (find feasible controls) one can equally well
employ heuristics that can determine good controls more
efficiently. Note that also the MILP-approach benefits from good
feasible solutions, especially if they respect fixed variables in the
branch-and-bound search tree.\revJ{\footnote{A branch-and-bound
    search tree is built by conventional MILP solvers in order to
    organize the enumeration of integral variables.  In each node of
    that tree, some of the integral variables have been fixed to a
    value.  A heuristic is only useful in the branch-and-bound
    solution process if it can find solutions in all nodes of the
    branch-and-bound tree, i.e., solutions that are consistent with
    the variable fixings in that node.}} So in the next
subsections we give three heuristics to find good controls.

\subsection{The Strongest-Guy Heuristics}
What makes the problem hard, apart from the discontinuous dynamics and
numerical instabilities, is the fact, that the control $x^t_0$ is a
continuous variable in all stages~$t$. Thus, at first sight the
problem is not a finite one. Let us relax our problem a bit by
allowing only a finite number of possibilities for $x_0^t$
at any stage and have a closer look at the situation.

By placing a control $x_0^t$ at stage~$t$ some
voters are influenced by the control while others are not
influenced by the control.  We notice that the magnitude
of influence rises with the distance between the voters opinion
$x_i^t$ and the control $x_0^t$ as long as
their distance remains below $\epsilon$. So the idea is, even though
we are not knowing what we are doing, we will do it with full
strength. Let $c=\frac{l+r}{2}$ be the center of the
conviction interval $[l,r]$ \rev{and 
$$
  \mu(i,t)=\left\{
  \begin{array}{rcl}
  \max(x_i^t+\epsilon,1) & \text{if} & i\in I\text{ and }x_i^t\le c,\\
  \min(x_i^t-\epsilon,0) & \text{if} & i\in I\text{ and } x_i^t>c,\\
  c                      & \text{if} & i=0
  \end{array}
  \right.
$$
be a mapping from $\Big(I\cup\{0\}\Big)\times\mathbb{N}_{\ge 0}$ to $[0,1]$. Pulling 
an agent~$i\in I$ at stage~$t$ with full strength towards the center 
of the conviction interval means to place the control at $\mu(i,t)$. The special 
choice of the function for $i=0$ attracts the agents towards the center of the 
conviction interval and aims to avoid oversteering and oscillations.}
We call this relaxation of the problem the strongest-guy heuristics.

\begin{table}[h]
  \centering\footnotesize
  \begin{tabular}{rrl}
    \toprule
    \# stages  & \# convinced voters & control sequence\\
    \midrule 0 &  3 & $[]$                     \\
    1 &  3 & $[4]$                             \\
    2 &  4 & $[4,4]$                           \\
    3 &  5 & $[3,8,0]$             \\
    4 &  5 & $[3,3,8,0]$ \\
    5 &  6 & $[3,3,8,10,3]$        \\
    6 &  6 & $[3,8,8,2,7,2]$       \\
    7 &  8 & $[3,8,1,0,1,1,1]$     \\
    8 &  8 & $[3,6,0,3,1,9,9,9]$   \\
    9 &  8 & $[3,8,6,0,9,9,9,9,9]$ \\ 
    10 & 11 & $[3,0,0,10,6,9,3,4,7,0]$\\
    \bottomrule
  \end{tabular}
  \caption{Results of the strongest-guy heuristics on the benchmark example.}
  \label{table_strongest_guy}
\end{table}


Instead of giving the exact values of $x^t_0$ for all stages~$t$ we
can also give a sequence of \rev{indices~$[i_0, i_1, \dots, i_{N-1}]$} of those
voters that are pulled at maximum strength.  The number of such
sequences is finite.  Therefore, the controls arising this way can be
enumerated, in principle, \rev{see Algorithm~\ref{alg_strongest_guy}}.


\begin{algorithm}[htbp]
\label{alg_strongest_guy}
\SetKw{In}{in}
\SetKwInOut{Input}{Input}
\SetKwInOut{Output}{Output}
\SetKwData{InfluenceRadius}{$\epsilon$}
\SetKwData{Voters}{$I$}
\SetKwData{NumStages}{$N$}
\SetKwData{StartingOpinions}{$x_i^0$ for all $i\in I$}
\SetKwFunction{ConvincedVoters}{NumberOfConvincedVoters}
\Input{\InfluenceRadius, \Voters, \NumStages, \StartingOpinions}
\Output{best configuration for the strategic agent}
\DontPrintSemicolon
\BlankLine
$\mathit{champion}$ $\leftarrow$ $-1$\;
\ForEach{$\left[i_0,i_1\dots,i_{N-1}\right]$ \In $\Big(I\cup\{0\}\Big)^N$}
{
  $\kappa$ $\leftarrow$ \ConvincedVoters($\mu(i_0,0),\mu(i_1,1),\dots,\mu(i_{N-1},N-1)$)\;
  \If(\tcc*[f]{better solution found?}){$\kappa>\mathit{champion}$}
  {
    $\mathit{champion}$ $\leftarrow$ $\kappa$\;
    \For(\tcc*[f]{update best solution}){$j\leftarrow 0$ \KwTo $N-1$}
    {
      $i^\star_j$ $\leftarrow$ $i_j$\;
    }
  }
}
\Return{best solution}\;
\caption{Strongest-Guy Heuristics}
\end{algorithm}


In Table~\ref{table_strongest_guy} we give for our benchmark problem the
maximum number of convinced voters that can be achieved by using the
strongest guy heuristics together with the corresponding
index-vector.\footnote{We have used an implementation with
exact arithmetic to avoid numerical inaccuracies.} The
corresponding trajectories are drawn in
Figure~\ref{fig:Strongest-guy-trajectories}. We observe
  that the strongest-guy heuristics improves the best found solution
  of the ILP approach, given 1h of computation time, for $N=7$ stages
  by two additional convinced voters.  For $N=4$ stages the heuristics
  misses the optimum of $6$ convinced voters by one. Given the upper
  bounds from Table~\ref{tab:MILP-advanced-results_summary} we can
  conclude that the strongest-guy heuristics found an optimal solution
  for $N\in\{0,1,2,3,5,6,10\}$.

\begin{figure}[htp]
  \centering
  \includegraphics[width=0.2\linewidth]{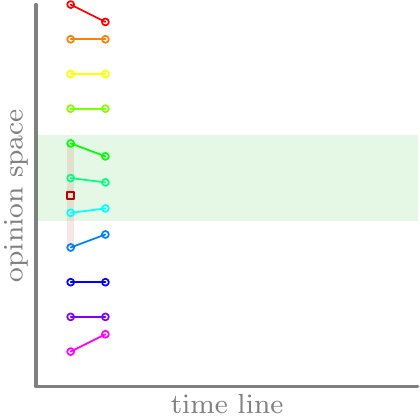}%
  \includegraphics[width=0.2\linewidth]{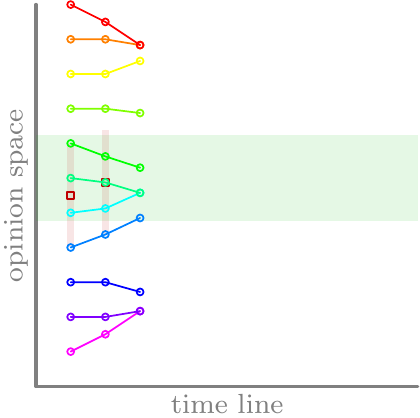}%
  \includegraphics[width=0.2\linewidth]{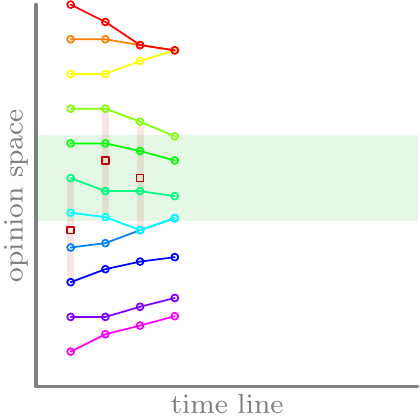}%
  \includegraphics[width=0.2\linewidth]{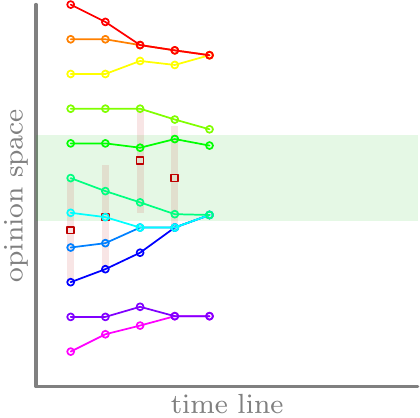}%
  \includegraphics[width=0.2\linewidth]{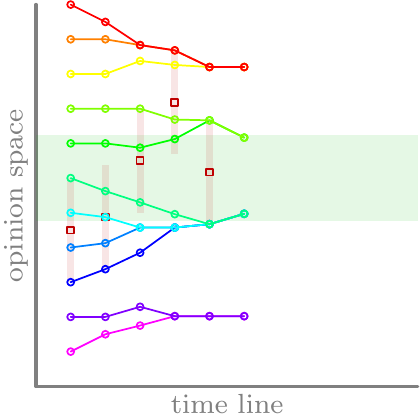}\\%
  \includegraphics[width=0.2\linewidth]{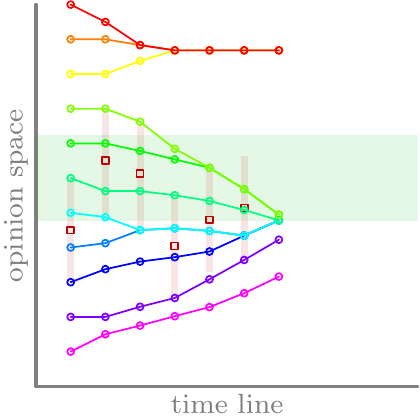}%
  \includegraphics[width=0.2\linewidth]{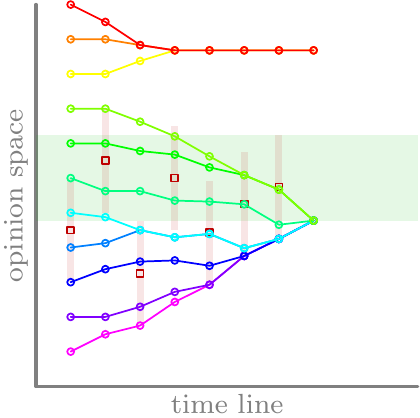}%
  \includegraphics[width=0.2\linewidth]{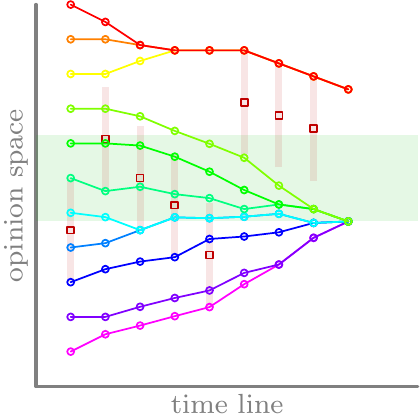}%
  \includegraphics[width=0.2\linewidth]{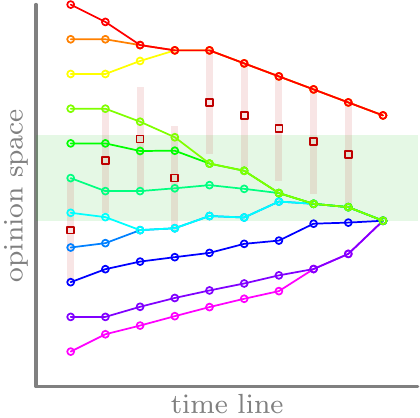}%
  \includegraphics[width=0.2\linewidth]{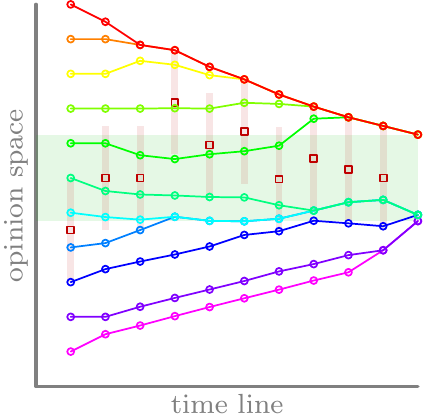}%
  \caption{Trajectories produced by solutions of the strongest-guy heuristics}
  \label{fig:Strongest-guy-trajectories}
\end{figure}

We can improve upon these findings by slightly modifying 
the strongest-guy heuristics. In order \rev{to improve the running time 
of our implementation, we had replaced the exact arithmetic by floating 
point numbers.} To avoid numerical \rev{instabilities we} had experimented with the 
positions $x_i^t+\epsilon-\delta$ and $x_i^t-\epsilon+\delta$ for $\delta=10^{-6}$, 
i.e.\ almost full strength. Curiously enough, we have obtained better solutions 
in some cases. For $N=4$ stages the $\delta$-modified control sequence 
$[3,3,8,6]$, corresponding to the control
$$
x_0=\left(\frac{349999}{1000000},\frac{309999}{800000},\frac{550001}{1000000},\frac{122599789}{200000000}
\right)\approx\left(
0.349999,0.387499,0.550001,0.612999
\right),
$$
results in six convinced voters. 
The corresponding trajectory is drawn in Figure~\ref{fig:Modified-strongest-guy-trajectories}. 
We remark that we are not aware of any further improvements. 

\begin{figure}[htp]
  \centering
  \includegraphics[width=0.4\linewidth]{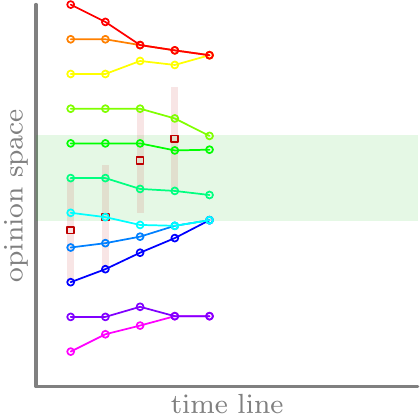}
  \caption{Trajectories produced by solution of the modified strongest-guy heuristics for $N=4$}
  \label{fig:Modified-strongest-guy-trajectories}
\end{figure}


Having a look at the optimal controls of Table
\ref{table_strongest_guy} one gets an impression of the hardness of
our problem. There seems to be no obvious pattern in the optimal
controls. Who would have guessed a control like
$[3,0,0,10,6,9,3,4,7,0]$ or $[3,8,8,2,7,2]$? 


We remark that the strongest-guy heuristics can be easily adopted to
the situation where some of the $0$-$1$ variables from the MILP
formulation (see Appendix~\ref{sec:Mathematical-Model}) are fixed to
either $0$ or~$1$.  Thus, it is possible to install a call-back to the
strongest-guy heuristics inside an MILP solution process.
However, in our experience the MILP solution needs most of the time to
tighten the dual bound.


\subsection{A Genetic Algorithm}
\label{sub:genetic_algorithm}
To get a better idea about the solution space, we implemented a
genetic algorithm (GA) to \rev{heuristically} search for optimal solutions. 

\rev{
GAs work on the analogy of biological evolution and the Darwinian principle of \emph{survival 
of the fittest}. They test many sets of parameters automatically to find good solutions to the 
problem. GAs work based on rounds (so called \emph{generations}). Parameter sets resulting in 
good solutions advance to the next generation, bad ones are discarded. By modifying the good 
parameter sets before testing them again, new sets are created and tested. Over the course of 
many rounds the quality of the solutions gradually increases. Eventually one hopes to find a 
very good solution without having to brute-force all possible parameter combinations.
}

\rev{
Because a new parameter set is created by modifying existing sets that are already showing useful 
results, one of the main assumptions of GA is the idea that the solution space is somewhat smooth 
and that the optimal solution is found close to an already good solution. If the solution space is 
heavily fragmented and good solutions are just slightly removed from very bad solutions GAs tend not 
to work. During the execution of a GA this would manifest in a non-increasing overall solution quality.
}

\subsubsection{Setup of the GA}
\label{subsub:ga_setup}
\rev{
To find a solution for the voting problem, the only parameters that can be influenced are the 
position of the strategic agent for each of the discrete time events. In GAs each parameter set is 
called a \emph{chromosome}, a single parameter is a \emph{gene}. In the voter's game, a chromosome 
consists of ten genes, which each encode one position of the strategic agent in the opinion space. 
}

\rev{
To prevent any rounding problems inherent to the \texttt{float} and \texttt{double} data types in Java (see
Section \ref{sec:Numerics}), we chose to use fractions instead. The numerator and the denominator are represented 
as Java's \texttt{BigInteger}. Thus, the calculation is exact and does not suffer from the usual rounding problems 
that Java encounters with the primitive data types.\revJ{\footnote{See
Section~\ref{sec:Numerics} for the relevance of this.}} 
}

\rev{
One of the most popular packages to implement GAs in Java is \emph{JGAP -- Java Genetic Algorithm 
Package}\footnote{\url{http://jgap.sf.net}}. It provides a framework for the GA that must be extended 
with the problem specific fitness function and the chromosome implementation. Then it can be used to 
execute selection, cross over, mutation and evolution on the chromosomes. We used JGAP version 3.3.3.
}

\rev{
Each run of the GA start with a \emph{population} of 500 randomly generated chromosomes. Furthermore, 
we had to determine how many generations the GA should evolve. We set this value to 250 rounds. After 
250 rounds, the overall quality of the solutions did not increase, but reached a plateau from which the 
GA did not find any better solutions. 
}

\subsubsection{Chromosome And Fitness Function}
\label{subsub:ga_fitness_function}
\rev{
Each chromosome is a list of the strategic agents position in that particular configuration. To assess the 
quality of the chromosome, we have to calculate how many of the voting agents are convinced by the configuration 
after ten rounds of the game. Analogous to the biological \emph{survival of the fittest} GAs evaluate the quality 
of a solution with a \emph{fitness function}. This function is highly specific to the problem.
}
 
\rev{
To assess the fitness of each chromosome, we started out with the simplest possible fitness function: At the end
of the game we counted how many voters were actually convinced. Whether or not a voting agent had been convinced was 
determined by the party that the agent was closest to. Because we tried to convince voting agents to vote for party 2, 
any voting agent that was closer to party 2 than to any other party was considered a voter for that party. The parties 
have fixed positions in the opinion space. They are located at positions ${0.25, 0.5, 0.75}$. Effectively this results 
in the conviction interval of $[0.375, 0.625]$. If a voting agent ended up in that interval, it was considered a voter. 
}

\begin{figure}[htp]
  \begin{center}
    \includegraphics[width=10cm]{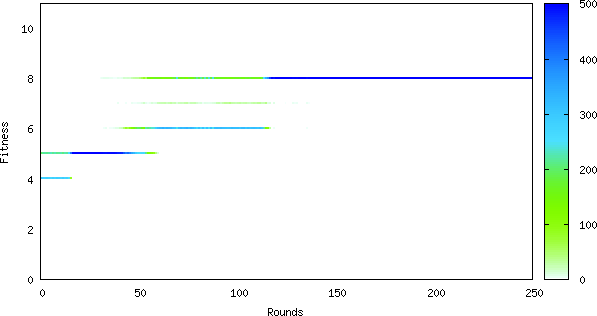}
    \caption{GA with the \emph{MaxVotes} fitness function yielding at most eight voters}
    \label{fig:ga_mv}
  \end{center}
\end{figure}

\rev{
This fitness function (called \emph{MaxVoter} (MV)), see Figure \ref{fig:ga_mv} for a typical run of the GA with
the MV fitness function, suffers several drawbacks. The main one is that it is a discrete function: A voting agent 
is either convinced or it is not convinced. If there are two chromosomes that both evaluate to \emph{nine} voters 
one cannot tell whether one of the chromosomes is closer to \emph{ten} voters than the other. For instance, a chromosome 
$A$ could yield the positions 
\begin{equation*}
  A = \{0, 0.38, 0.39, 0.4, 0.45, 0.5, 0.55, 0.6, 0.61, 0.62, 1\},
\end{equation*}
chromosome B could yield 
\begin{equation*}
  B = \{0.37, 0.38, 0.39, 0.4, 0.45, 0.5, 0.55, 0.6, 0.61, 0.62, 0.63\}.
\end{equation*}
Both chromosomes result in nine convinced voters. 
However, chromosome~$B$ has the two non-convinced voters (at positions $0.37$ and $0.63$) much closer to the conviction 
interval than chromosome~$A$ (non-convinced voters at position $0$ and $1$). Probably chromosome~$B$ is closer to an optimal 
solution than chromosome~$A$. Therefore it should have a higher fitness value. To account for these shortcomings, we used 
eight different fitness functions with varying results. 
}


The fitness functions fall into three different categories:

\begin{description}
\item[Weighted Sum] This category of fitness functions calculates the
  weighted sum of all the voters final positions. The weight to be
  used is computed with a given partially defined function that maps a
  position to a weight. The MV fitness function is a special case of
  the Weighted Sum class of functions, because it assigns weight $1$
  to all positions in the conviction interval and weight $0$
  otherwise. The other functions used in this class are
  \emph{DistanceToParty2 (D2P2)} and \emph{BorderDistanceToAll
    (BD2A)}. Both of them differ from MV in that they assign values
  between $0$ and $1$ to positions that (a) are not in the conviction
  interval with higher values the closer the position is to the
  interval or (b) decreasing values within the interval, the closer
  the position is to the center of the interval. This leads to
  positions of voters on the very edges of the interval to be the most
  favorable. The idea behind evaluating positions within the interval
  differently is that voters sitting on the edges of the interval have
  the greatest effect on voters that are not in the range yet.
\item[Last Remaining] The class of the last remaining fitness
  functions does not evaluate every voter but restricts itself to
  convinced voters (counted with weight $1$) and the nearest voter
  that has not reached the conviction interval yet (weighted according
  to the function). All other voters are assigned weight $0$. The
  fitness function in this category is \emph{BorderDistanceToMin
    (BD2M)}, which has the same form as BD2A from the \emph{Weighted
    Sum} category.
\item[Minimum Distance] The last class of fitness functions does not
  evaluate all voters positions but takes into account the distance
  between the two outmost voters (i.\,e. the voter with the highest
  opinion and the voter with the lowest opinion). Because of the order
  preserving characteristics of the model, these two voters do not
  change throughout one run, which means, we can just use the distance
  between the voter that started with opinion $0$ and the one that
  started with opinion $1$. If the distance is in the range of $[0,
  0.25]$, which is the size of the conviction interval, the fitness
  evaluates to the maximum fitness value of $10$. If the distance is
  greater than $0.25$, the function computes a value that is
  decreasing to $0$ with increasing distance. The two functions used
  in this class are \emph{MinimumDistanceBetweenFirstAndLast (MDBFL)}
  and \emph{MinimumDistanceBetweenFirstAndLastSquare (MDBFLS)} that
  have a linear or quadratic slope respectively. A third function in
  this class is the
  \emph{MinimumDistanceBetweenFirstAndLastToCenterSquare (MDBFL2CS)}
  that accounts for the fact that the group of voters with opinions
  below $0.5$ may not behave symmetrically to the group with opinions
  above $0.5$. This asymmetry can result in the position range not to
  be centered around $0.5$ but deviate from that midpoint. Such
  behavior is undesirable, since the original goal is to get as many
  voters close to $0.5$ as possible. MDBFL2CS accounts for this and
  evaluates the positions of the two outmost voters with respect to
  their distance to the desired midpoint. The two values are added.
\end{description}

\rev{
\subsubsection{Selection}
At the end of each round, the fitness value for each chromosome is calculated. Based on the fitness value, 
the core idea of the GA takes place: Those chromosomes with a high fitness value have a larger chance to ``survive'' 
and to advance into the next round. Specifically, the GA framework provides two different selection routines. One 
selection (\emph{weighted roulette selector}, WRS) assigns each chromosome a probability of survival based on its 
fitness value. The population of chromosomes for the next generation is picked according to the probabilities. The 
other selector (\emph{Best Chromosomes Selector}, BCS) discards the chromosomes with the lowest fitness values and 
lets the other chromosomes survive. This selector converges quicker than the WRS. However, its disadvantage is the 
possibility that it gets stuck in a local optimum, which is not the global optimum.
}

\rev{
\subsubsection{Cross Over And Mutation}
Finally, the new population is crossed over and mutated. The \emph{Cross over} operator picks two chromosomes of 
the new population at random. Then it picks a random number $n$ between 1 and the number of genes in the chromosome 
(i.\,e. in the voter's game a number between 1 and 10). It splits both chromosomes at the $n$-th gene, resulting in 
four halves: chromosome 1, part 1 (C1.1), chromosome 1, part 2 (C1.2), chromosome 2, part 1 (C2.1) and chromosome 2, 
part 2 (C2.2). It ``crosses over'' the second parts of the chromosomes, resulting in two new chromosomes, made up of 
C1.1 and C2.2 (chromosome A) and C1.2 and C2.1 (chromosome B). Table \ref{tab:crossover} shows an example for a cross 
over between two chromosomes.
}

\rev{
\begin{table}
\centering
\begin{tabular}{|c|c|c|c|c|}
\multicolumn{2}{c}{original chromosomes} & \multicolumn{1}{c}{} & \multicolumn{2}{c}{new chromosomes} \\
\cline{1-2}\cline{4-5}
\texttt{1 2 3 4 5 6} &\texttt{7 8 9 0}  & $\rightarrow$ & \texttt{1 2 3 4 5 6} & \texttt{G H I J} \\
\cline{1-2}\cline{4-5}
\texttt{A B C D E F} & \texttt{G H I J}  & $\rightarrow$ & \texttt{A B C D E F} & \texttt{7 8 9 0} \\
\cline{1-2}\cline{4-5}
\end{tabular}
\caption{Example for a cross over at gene 7. Letters and numbers depict individual genes.}
\label{tab:crossover}
\end{table}
}

\rev{
The rate how often cross over happens depends on the population size. It is $r = \frac{\mbox{population size}}{2}$. 
Every time the operator is invoked cross over happens $\frac{\mbox{population size}}{r}$. Thus, with a population of 
500, cross over happens two times per generation.
}

\rev{
The \emph{mutation} operator picks one gene of a random chromosome. It replaces the gene with a random value. The mutation 
rate was fixed at $\frac{1}{15}$, resulting in 33.33 mutation per generation.
}

\subsubsection{Evolution}
\label{subsub:ga_evolution}
\rev{
After each round the GA determines the fitness of each chromosome. Then it selects the fittest chromosomes 
according to the selector and puts them into the next population. Once the candidate set has been established, 
the cross over and mutation happen on the candidate set. After both genetic operators have finished, the GA has 
established the population for the next generation of chromosomes and starts to evaluate them again. This happens 
until either a satisfactory fitness value has been reached by one chromosome (ie. 11 voters for the voter game) 
or until the maximum number of generations has been reached (ie. 250). Finally the GA outputs the best solution. 
}
There are different
selection algorithms available. We used two different ones, which are
among the standard selection algorithms:

\begin{description}
\item[Weighted Roulette Selector (WRS)] Each chromosome is assigned a
  probability to advance to the next round proportional to its
  fitness. Then the population for the next round is chosen by
  randomly picking a chromosome from the so called `roulette wheel'
  as often as desired. This selection method allows for some
  chromosomes with low fitness values to advance to the next round,
  which results in a lower chance to reach a local optimum too
  quickly.
\item[Best Chromosomes Selector (BCS)] The BCS sorts the population
  according to the fitness values and discards the fraction with the
  lowest fitness values. The ration of chromosomes to retain is
  configurable. BCS fosters depth search with the danger of reaching a
  local optimum. As an advantage, it progresses much quicker than WRS.
\end{description}

After the chromosomes for the next round have been selected, the GA
performs the crossover and mutation operations, whose parameters
(percentage of the mutation, point of crossover) are configurable.

\rev{
\subsubsection{Pseudo-Code}
To provide an explanation that is closer to the actual code, the
pseudo code for the core GA routines looks like this: }

\rev{
\begin{algorithm}[htbp]
\SetKw{In}{in}
\SetKwInOut{Input}{Input}
\SetKwInOut{Output}{Output}
\SetKwData{PopSize}{populationSize}
\SetKwData{MaxRounds}{maxRounds}
\SetKwData{Chromosome}{chromosome}
\SetKwData{Population}{population}
\SetKwData{PopNew}{newPopulation}
\SetKwFunction{Selector}{selector}
\SetKwFunction{CrossOver}{crossOver}
\SetKwFunction{Mutate}{Mutate}
\SetKwFunction{SelectionOperator}{selectionOperator}
\SetKwFunction{InitPop}{InitializeRandomPopulation}
\SetKwFunction{Fitness}{EvaluateFitness}
\SetKwFunction{Selection}{Selection}
\Input{\PopSize, \MaxRounds, \Selector}
\Output{best configuration for the strategic agent}
\DontPrintSemicolon
\BlankLine
\Population $\leftarrow$ \InitPop{\PopSize}\;
\BlankLine
\For(\tcc*[f]{fitness evaluation}){$i\leftarrow 1$ \KwTo \MaxRounds}{
	\ForEach{\Chromosome \In \Population}{
		\Fitness(\Chromosome)\;
	}
	\If(\tcc*[f]{stop evolution}){maximum fitness has been reached}{
		\Return{best solution}\;
	}
	\BlankLine
	\PopNew $\leftarrow$ \Selector{\Population}\tcc*[r]{selection}
	\BlankLine
	\For(\tcc*[f]{cross over}){$j\leftarrow 1$ \KwTo $\frac{\mbox{\PopSize}\times 2}{1\times \mbox{\PopSize}}$}{
		\Chromosome 1 $\leftarrow$ random \Chromosome from \PopNew\;
		\Chromosome 2 $\leftarrow$ random \Chromosome from \PopNew\;
		remove \Chromosome 1 and \Chromosome 2 from \PopNew\;
		\CrossOver{\Chromosome 1, \Chromosome 2}\;
		\PopNew $\leftarrow$ add modified \Chromosome 1 and \Chromosome 2\;
	}
	\BlankLine
	\For(\tcc*[f]{mutation}){$j\leftarrow 1$ \KwTo $\frac{1}{15}$ \PopSize}{
		\Chromosome $\leftarrow$ select random \Chromosome from \PopNew\;
		remove \Chromosome from \PopNew\;
		\Mutate{random gene in \Chromosome}\;
		\PopNew $\leftarrow$ add mutated \Chromosome\;
	}
	\Population$\leftarrow$ \PopNew\;
}
\Return{best solution}\;
\caption{Genetic Algorithm}
\end{algorithm}
}

\subsubsection{Results}
\label{subsub:ga_results}
Figure \ref{fig:ga_strategy_comparison} shows the performance of the
different fitness functions. One notable observation is the step like
behavior of the MV fitness function: Already around round 30, it
reaches eight voters, but does not advance from there. All the other
fitness functions show a much smoother behavior. However, with the
exception of BD2A and MDBFLS, all functions seem to have reached a
plateau around round 150.

\begin{figure}[htp]
  \begin{center}
    \includegraphics[width=10cm]{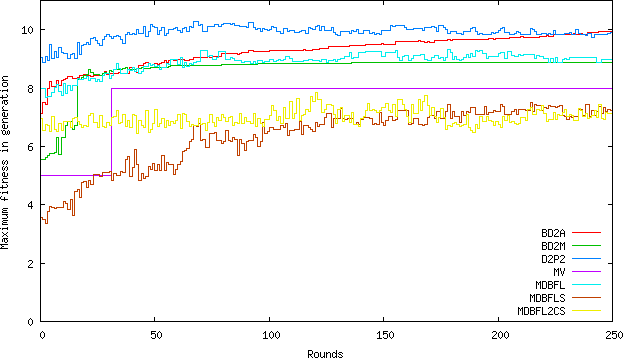}
    \caption{Different fitness functions reach different results}
    \label{fig:ga_strategy_comparison}
  \end{center}
\end{figure}

Judged from the performance of the fitness functions, BD2A promises
the best results as it still progresses at round 250 and also reached
a high fitness value. However, as BD2A optimizes a slightly different
problem than the original problem. Therefore, the performance with
respect to the fitness function has to be compared with the
performance of the fitness function with respect to the voters.

\begin{figure}[htp]
  \begin{center}
    \includegraphics[width=10cm]{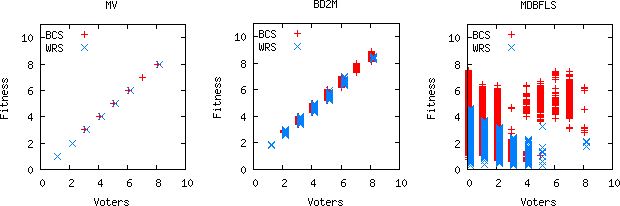}
    \caption{Performance of three fitness functions with respect to the convinced voters}
    \label{fig:ga_fitness_quality}
  \end{center}
\end{figure}

Figure \ref{fig:ga_fitness_quality} depicts the performance of the
fitness functions MV, BD2M, and MDBFLS. While MV maps the original
problem exactly, only BD2M provides a good mapping. MDBFLS (and all
other fitness functions alike) does not provide a good mapping of a
fitness value to a certain number of voters. This, of course, poses a
problem, since the only way for the GA of evaluating a certain
strategy is the fitness function. The graph suggest that, apart from
MV, only BD2M should be used.

Of the two different selectors available, BCS proved to be the most
useful of the two. While the WRS worked, the evolution of the
population happened very slowly regardless of the fitness function
used. Figure \ref{fig:ga_selector_comparison} shows the result of the
two selectors while using the same fitness function (BD2A). Similar
results hold for all other fitness functions as well.

\begin{figure}[htp]
  \begin{center}
    \includegraphics[width=10cm]{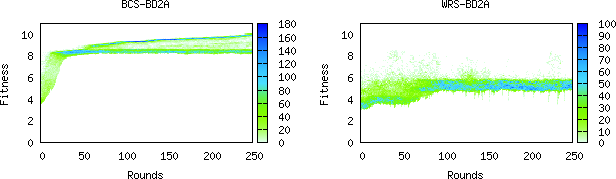}
    \caption{Comparison of two different selectors}
    \label{fig:ga_selector_comparison}
  \end{center}
\end{figure}

Because of these findings, the BD2M fitness function has been tested
with different values for the best performing selector BCS. The
selector allows to configure the fraction of voters that advances from
one generation to the next. With a value of $50\%$ only the better
half of chromosomes advances. This leads to an extremely narrow search
that runs into high risks of lingering at a local optimum. To increase
the chances of leaving a local optimum again, the percentage should be
increased. In the simulation, runs with ratios from the set $\{0.5,
0.6, 0.7, 0.75, 0.80, 0.85, 0.9, 0.91, 0.92, 0.93, 0.94, 0.95, 0.96,
0.97, 0.98, 0.99\}$ have been used. From these runs, only ratios above
$0.75$ resulted in stable evolution patterns, while rates below had
their fitness values alternate between very low and very high values
but did not converge.

Above 75\% all runs converged with an optimal rate at around 95\%. The
runs with ratios of $0.95$ and $0.96$ were the ones that produced
strategies that at least gave nine convinced voters. Figure
\ref{fig:ga_bcs_comparison} shows these two runs and the number of
voters that each chromosome generated.

\begin{figure}[htp]
  \begin{center}
    \includegraphics[width=10cm]{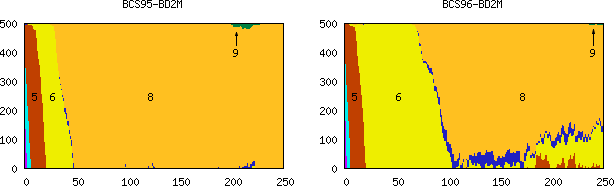}
    \caption{The two runs for different values for the BCS survival rate}
    \label{fig:ga_bcs_comparison}
  \end{center}
\end{figure}

Altogether the GA provides a heuristics to find optimal (or near
optimal) solutions. However, because of the problem structure the GA
did not find an optimal solution for the control problem. The most
convinced voters that the GA could find a strategy for were nine in
the example setting. Strategies yielding nine voters were extremely
rare and could only be obtained in settings with highly tuned
parameters. This result seems to indicate that the solution space has
a very sparse population that could possibly occupy a very restricted
region in the space.

%
%
%

\subsection{The Model Predictive Control Heuristics}
\label{sec:MPC}

The observation that our benchmark problem can be solved to optimality
if restricted to just a few stages suggests the following
receding-horizon control heuristics (RHC), sometimes also known under
the name \emph{model predictive control (MPC)}.  \revJ{See
Algorithm~\ref{alg_MPC} for a pseudocode describing MPC.}

\begin{algorithm}[h]
\SetKwInOut{Input}{Input}
\SetKwInOut{Output}{Output}
\SetKwData{InfluenceRadius}{$\epsilon$}
\SetKwData{Voters}{$I$}
\SetKwData{NumStages}{$N$}
\SetKwData{StartingOpinions}{$\mathbf{x}^0$}
\SetKwData{MPCStartingOpinions}{$\bar{\mathbf{x}}$}
\SetKwData{MPCParameter}{$\check{N}$}
\SetKwData{MPCHorizon}{$\bar{N}$}
\SetKwData{Stage}{$t$}
\SetKwData{MPCControls}{$(\bar{u}^s)_{0 \le s < \MPCHorizon}$}
\SetKwData{MPCFirstControl}{$\bar{u}^0$}
\SetKwData{Control}{$u^t$}
\SetKwFunction{MPCSolution}{OptimalControl(\MPCStartingOpinions, \MPCHorizon, \InfluenceRadius)}
\SetKwFunction{BCDynamics}{BC-dynamics(\Control, \MPCStartingOpinions, \InfluenceRadius)}
\SetKwFunction{ConvincedVoters}{NumberOfConvincedVoters}
\Input{a confidence radius \InfluenceRadius,\\
  a set of agents \Voters,\\
  a horizon \NumStages,\\
  start opinions \StartingOpinions,\\
  an MPC horizon $\MPCParameter \in \mathbb{N}$\\
  a solver \MPCSolution for start opinions \MPCStartingOpinions on horizon \MPCHorizon}
\Output{a control sequence \Control, $0 \le \Stage < \NumStages$, for the strategic agent}
\DontPrintSemicolon
\BlankLine
\revJ{
\MPCStartingOpinions $\leftarrow$ \StartingOpinions \tcc*[f]{initialize opinions}\\
\Stage $\leftarrow$ 0 \tcc*[f]{initialize stage counter}\\
\While(\tcc*[f]{receding-horizon loop}){$\Stage < \NumStages$}{
  \MPCHorizon $\leftarrow$ $\min(\MPCParameter, \NumStages - \Stage)$
  \tcc*[f]{cut the MPC horizon to no.\ of stages left}\\
  \MPCControls $\leftarrow$ \MPCSolution \tcc*[f]{solve MPC-auxiliary problem}\\
  \Control $\leftarrow$ \MPCFirstControl \tcc*[f]{set next control to first MPC control}\\
  \MPCStartingOpinions $\leftarrow$ \BCDynamics \tcc*[f]{update opinions}
}
\Return{\Control}, $0 \le \Stage < \NumStages$\;
\caption{\label{alg_MPC}The MPC Heuristics}
}
\end{algorithm}


Although this method uses exact optima of a related optimal control
problem, it is in general a suboptimal control.  The hope is that its
performance is in many cases not too far away from the optimal
objective value.

For our computational results we used the advanced MILP to solve the
upcoming MPC-auxiliary problems.  In order to simplify the setup, we
implemented MPC in the following form: Compute the MPC-auxiliary
problem, then fix the first control to the solution value, finally
increase the number of stages for the MPC-auxiliary problem by one and
repeat.  This way, the opinions in past stages are computed from
scratch in each iteration based on fixed controls.  This does not make
a difference in the resulting number of convinced voters.  Because of
the usage of rounded intermediate results (the fixed controls for
earlier stages), we observed some infeasible problems.  This was cured
by changing the control values by $\pm 10^{-6}$, which cured the
problem in all cases.

\begin{table}[h]
  \centering\footnotesize
  \begin{tabular}{rrrrrr}
    \toprule
    MPC-horizon & \# convinced voters & CPU time [s] for last
    MPC-auxiliary problem \\
    \midrule
    3 & 6       &    6.27\\ 
    4 & 8       &   60.41\\
    5 & 8       &  163.43\\
    \bottomrule    
  \end{tabular}
  \caption{Results of the MPC heuristics for various shorter horizons
    based on the advanced MILP model applied to the benchmark problem
    with $10$ stages;
    MacBook Pro 2013, 2.6\,GHz Intel Core i7, 16\,GB 1600\,MHz DDR3
    RAM, OS~X~10.9.2, \texttt{zimpl} 3.3.1, \texttt{cplex} 12.5
    (Academic Initiative License), branching priorities given
    according to stage structure.} 
  \label{tab:MPC-results}
\end{table}
Table~\ref{tab:MPC-results} shows the computational results for the
MPC heuristics.  We see that it is unable to find the optimal control
with $11$ convinced voters in $10$ stages.  This indicates that the
control leading to $11$ convinced voters must use control
sub-sequences of at least length five that are suboptimal on the
shortened horizon.  This is another hint that optimal controls 
exhibit non-trivial structures.

The conclusion of this section on heuristics is that an intelligent
tailor-made combinatorial heuristics works best on our very special
benchmark problem.  The reason for this is most probably that in the
fitness landscape of this benchmark problem solutions with more than
nine convinced voters are rare and hidden.

For a more general assessment, all methods should compete on a
complete benchmark suite with random perturbations of parameters.
This, however, goes beyond the purpose of this paper, in which we
wanted to advertise the field of optimal opinion control and show some
possible directions of future research.


\section{Interpretation of  Results}
\label{sec:interpr-results}

The computational results are twofold: first we have collected results
about \emph{optimal controls} including their performance (how many
convinced voters are possible?) and about
\emph{models and solution methods} including their effectivity (how
tight is the information they compute?), and efficiency (how quickly
do they compute that information?).

The results about \emph{models and solution methods} confirm that the
BC optimal control problem is much more \revR{difficult} than the DG optimal
control problem. BC optimal control must be modeled with care: the
advanced model performs much better in all tests than the basic model.
It is the method that, so far, provides the tightest information on
the performance of optimal controls in all our benchmark
problems. Still, our ability to find provably optimal controls is
limited.  Using the fast strongest-guy heuristic, we could find upper
bounds for the number of achievable convinced voters that are tight
for our benchmark problem in all known cases.

The results about \emph{optimal controls} themselves confirm that \revR{the} 
BC dynamics causes many interesting effects: Neither is the number of
achievable convinced voters monotone in time, nor does every optimal
campaign for $N$ stages constitute an optimal campaign for any smaller
number of stages.  In other words: Enlarging the time horizon in an
otherwise unchanged campaign problem requires a \revR{completely} new solution.
Though this sounds plausible in the real-world, we do not claim that
this conclusion can safely be transferred to real-world
campaign-planning -- what we rather state is that it takes no more
than the mechanism of BC opinion dynamics to let this effect emerge.

\section{Conclusion and Outlook}
\label{sec:conclusion-and-outlook}

We have introduced the problem of optimal opinion control by simply
allowing one individual to freely choose in each stage its
advertised opinion.  All efforts to find optimal controls
in a small example instance showed that the structure of optimal
controls is complicated.  Modeling with MILP-techniques is possible,
but even sophisticated models are hard to solve.  An optimal control
for the campaign problem with eleven voters and one through
ten stages remains open for seven through nine stages.  Ten stages
could be solved by the strongest-guy heuristics, which is able to
convince all eleven voters in ten stages.  Popular meta-heuristics
like genetic algorithms and model predictive control could not find
this solution. 

The fact that the small campaign problem is still largely a mystery,
make its investigation interesting for further research. \revJ{More
  specifically:
  \begin{itemize}
  \item What happens if the number of voters increases? How do methods
    to compute optimal controls scale with this paramter?  If there
    are really many voters (like 1000), what is a plausible control
    concept to gain a significant influence?
  \item How do optimal controls change with respect to certain
    properties of the uncontrolled dynamics? Are situations with
    uncontrolled consensus easier to control optimally?  Are
    situations easier to control optimally where uncontrolled opinions
    automatically would end up in the conviction interval?
  \item Are there more counter-intuitive effects? Can it happen that
    constantly controlling with the central opinion of the conviction
    interval yields fewer voters than no control at all?
  \item What happens if we have control restrictions?  If each control
    has to be inside an interval around the true opinion of the
    controller, how do optimal controls change and are the methods to
    compute them still applicable?  What if the true opinion inducing
    these restrictions is itself subject to opinion dynamics?
  \end{itemize}
}
Other directions are the generalization to more
than one controller (game theory) and multi-dimensional opinion
spaces.  \revJ{That is:
  \begin{itemize}
  \item How should a suitable game be defined in the first place?
    Should it be one game with one strategy consisting of ten controls?
    Should it be an iterated game with only one control as a strategy?
    Are there equilibria in one and/or the other model?
  \item Do all equilibrium strategies lead to the same `election
    winner' (= party who `convinced' most voters)?
  \item Do three or more controllers make a structural difference?
  \item Is it `easier' (in a sense yet to be defined) to `win an
    election' with a `radical' (close to zero or one) or a `moderate'
    (close to 0.5 or in the middle of the competitors) party opinion?
  \item Does any of these change if the opinion space has a higher
    dimension?
  \end{itemize}
  We think that optimal opinion control opens the door to a wealth of
  questions posing mathematical problems that are interesting in their
  own right.  Moreover, these questions and also possible answers
  trigger equally exciting challenges for their interpretations.
  After all, formally we are investigating only an artificial society,
  and what we can learn from the results is no output of mathematics
  -- it has to be discussed very carefully.}

\section*{Acknowledgements}

We thank the anonymous referees for very valuable suggestions to
improve the presentation of the paper.


\appendix
\section{Exact Mathematical Models for  Optimal Opinion Control}
\label{sec:Mathematical-Model}

In this appendix, we present and briefly explain the detailed
mathematical Mixed Integer Linear Programming (MILP) models used in
this paper.  This should allow the interested reader to replicate our
results.  

The chosen modeling technique is -- not surprisingly -- much more
powerful in the DG-model than in the BC-model; the former takes profit
of the linear system dynamics whereas the latter suffers a lot from
the highly non-continuous system dynamics and the numerical
instability.  More specifically, our model is not able to represent
the original problem exactly.  We will provide, however, actually two
models: one is correct in the sense that every upper bound on the
objective value of the model is an upper bound on the optimal number
of convinced voters in the original problem (but possibly not vice
versa); the other one is correct in the sense that any feasible
solution to it is a feasible solution to the original problem (but
possibly not vice versa).


The motivation for using integral variables in a model for our optimal
control problem is that the dynamics mainly depends on the structures
of the confidence sets and the conviction sets: We can use binary
variables to indicate how the conviction sets and the confidence sets,
resp., look like.

Since the BC-model requires some experience in modeling with MILPs, we
start with a model for the DG optimal control problem.  Later on, when
the main principles are explained, we will present a model for the
BC-model.

\subsection{An MILP Model for the DG Optimal Control Problem}
\label{sec:MILP-DeGroot}

In this section we present a mathematical model, a Mixed Integer
Linear Programming model (MILP), for the solution of the DeGroot
optimal opinion control problem.  We start with the DeGroot
dynamics in order to explain some crucial MILP-modeling techniques in
this easier dynamics.  These techniques will be used extensively for
the bounded-confidence dynamics, in which the logic is considerably
more complicated.

The following MILP model is based on standard modeling techniques in
(MILP).  We first list the \emph{variables} of the model.
\begin{itemize}
\item The continuous variables $x^t_0 \in [0,1]$, $t = 0, 1, \dots, N-1$ denote the
  positions in opinion space where we place a control in the various stages;
  these are the variables that we are really after.
\item The continuous variables $x^t_i \in [0,1]$, $i \in I$, $t = 0, 1, \dots,
  N$ denote the positions of the voters in the various stages; these
  variables measure the system states.  The variables in stage~$0$ are given
  as input data (start state / start value).
\item For each voter, we want to measure whether its position in
  stage~$N$ is inside the conviction interval; to this end, we use binary
  variables $z_i \in \{0, 1\}$, $i \in I$, with the following meaning:
  $z_i = 1$ if and only if $i$ is convinced in stage~$N$, i.e., $x^N_i \in
  [\ell, r]$.
\end{itemize}

With this, we may formulate the \emph{goal} of the model: we want to maximize the
number of convinced voters, which can be expressed as follows:
\begin{equation}
  \label{eq:DeGroot-MILP-Goal}
  \max \sum_{i \in I} z_i.
\end{equation}

Now, the success measuring variables $z_i$ have to be coupled with our
decisions~$x^t_0$ via the system states and the system dynamics.  The
following linear \emph{side constraint} couples the decisions to the system
states:

\begin{equation}
  \label{eq:DeGroot-MILP-Dynamics}
  x^{t+1}_i = \sum_{i \in I_0} w_{ij} x^t_j \quad \text{for all }i \in I, t = 0, 1, \dots, N-1.
\end{equation}

So far, we did not restrict the binary variables.  A solver would
simply set them all to~$1$ and achieve an objective value of~$n$ (all
convinced), because the binary variables so far have nothing to do
with the underlying dynamical system.

The binary variables can now be coupled to the system state variables
in stage~$N$ by a standard MILP modeling trick as follows. The logical
implication must be: If $z_i = 1$, i.e., if we want to count an
voter as convinced, then $\ell \le x^N_i \le r$ must hold.  In
other words, the inequalities $\ell \le x^N_i \le r$ can be violated
when $z_i = 0$, but they must be satisfied whenever $z_i = 1$.  Thus,
whether or not we demand the restriction $\ell \le x^N_i \le r$
depends on the value of a variable.  We call such a conditional
restriction a \emph{variable-conditioned constraint} and write it as
$\ell \le x^N_i \le r \ \text{vif $z_i = 1$}$.  The MILP modeling
trick can transform such a variable-conditioned constraint into a set
of unconditioned constraints in all cases where the violation of the
variable-conditioned constraint is bounded.

We show the transformation for the inequality $\ell \le x^N_i$, the
other inequality can be handled analogously.  The maximal violation of
the inequality $\ell - x^N_i \le 0$ is~$\ell$, since $\ell - x \le
\ell$ for all $x \in [0,1]$.  That means, the inequality $\ell - x^N_i
\le \ell$ does trivially hold, no matter where $x^N_i$ is in $[0,1]$.
We want to impose the trivial inequality $\ell - x^N_i \le \ell$
whenever $z_i = 0$ and the non-trivial inequality $\ell - x^N_i \le 0$
whenever $z_i = 1$.  But this can be achieved in one step by imposing
the inequality
\begin{equation}
  \label{eq:DeGroot-MILP-Convince-Left}
  \ell - x^N_i \le \ell(1 - z_i) \quad \text{for all }i \in I.
\end{equation}

The analogously derived inequality for the right border of the conviction
interval reads
\begin{equation}
  \label{eq:DeGroot-MILP-Convince-Right}
  x^N_i - r \le (1 - r)(1 - z_i) \quad \text{for all }i \in I.
\end{equation}

The complete MILP reads as follows:
\begin{align}
  &\lefteqn{\max\sum_{i \in I} z_i}\\
  &\text{subject to}\notag\\
  &&x^{t+1}_i    &= \sum_{i \in I_0} w_{ij} x^t_j && \text{for all }i \in I, t = 0, 1, \dots, N-1,\\
  &&\ell - x^N_i &\le \ell(1 - z_i)               && \text{for all }i \in I,\\
  &&x^N_i - r    &\le (1 - r)(1 - z_i)            && \text{for all }i \in I,\\
  && z_{i} &\in \{0, 1\} 
  &&\text{$\text{for all }i \in I$},\\
  && x^t_{i} &\in [0, 1] 
  &&\text{$\text{for all }t = 0, 1, \dots, N$, $i \in I \cup \{0\}$}.
\end{align}
In the following we will not spell out anymore the results of such
transformations.  Instead, we will present the variable-conditioned
constraints literally in order to make the logic more decipherable.
The above MILP with literally expressed variable-conditioned
constraints reads as follows:
\begin{align}
  &\lefteqn{\max\sum_{i \in I} z_i}\\
  &\text{subject to}\notag\\
  &&x^{t+1}_i    &= \sum_{i \in I_0} w_{ij} x^t_j && \text{for all }i
  \in I, t = 0, 1, \dots, N-1,\\
  &\lefteqn{\text{vif $z_i = 1$}}\\
  &&\ell &\le x^N_i \le r  && \text{for all }i \in I,\\
  &\lefteqn{\text{end}}\\
  && z_{i} &\in \{0, 1\} 
  &&\text{$\text{for all }i \in I$},\\
  && x^t_{i} &\in [0, 1] 
  &&\text{$\text{for all }t = 0, 1, \dots, N$, $i \in I \cup \{0\}$}.
\end{align}
The reader should bear in mind that MILP models with additional
variable-conditioned constraints with bounded violation can be
transformed into true MILP models.  Thus, such models are accessible
for standard MILP solvers like \texttt{cplex} \cite{cplex:2014}.
Modeling languages like \texttt{zimpl}
\cite{Koch:RapidMathematicalPrototyping:2004} even support
variable-conditioned constraints directly.

The MILP for the DG-model can be solved efficiently by
of-the-shelf software like \texttt{cplex}.  In particular, solving our
benchmark problem for any number of stages between $1$ and~$10$ is
possible.  For example, $11$ convinced voters are possible with only
one round for uniform weights, and this does not even need the help of
a control.

\subsection{MILP Models for the BC Optimal Control Problem}
\label{sec:MILP-Bounded-Confidence}

The optimal control problem in bounded-confidence dynamics is
non-continuous, thus non-linear.  Nevertheless, one can construct an
MILP model for it by using variable-conditioned constraints in a
similar way as in the previous section.

We introduce a real parameter $\hat{\epsilon}$ (meant to be of small
absolute value; in our computational experiments we chose
$\hat{\epsilon} = \pm 10^{-5}$) with the following meaning: whenever
$j$ is not in the confidence interval of~$i$, then $\abs{x_i - x_j}
\ge \epsilon + \hat{\epsilon}$ must hold.  For $\hat{\epsilon} > 0$,
this is stronger than the original condition, which is: if $j$ is not
in the confidence interval of~$i$, then $\abs{x_i - x_j} > \epsilon$
must hold, and vice versa.  This original condition is a strict
inequality that can not be handled directly in MILPs, and a
transformation to a different MILP (in modified so-called homogeneous
variables) is usually numerically highly unstable.

With the modified condition we can choose either to exclude
potentially feasible solutions (this happens for $\hat{\epsilon} > 0$)
or to include potentially infeasible solutions (this happens for
$\hat{\epsilon} \le 0$).  In the latter case, we grant the
optimization algorithm to choose freely in particular whether or not
$j$ is in the confidence interval of~$i$ whenever $\abs{x_i - x_j} =
\epsilon$.

Thus, the model needs to be applied twice: for the identification of
feasible solutions we need to set $\hat{\epsilon}$ to something
strictly positive, and for the determination of upper bounds on the
optimal number of convinced voters we need to set $\hat{\epsilon}$ to
at most zero.  The larger the absolute value of~$\hat{\epsilon}$ is,
the more robust the conclusions are against rounding errors.

If we run the model only once with, e.g., $\hat{\epsilon} = 10^{-5}$
we allow for a small inaccuracy in the upper bound obtained by the
model.  Such an inaccuracy can not be avoided when a standard
MILP-solver is used -- the most powerful solvers like \texttt{cplex}
\cite{cplex:2014}, \texttt{xpress}, or \texttt{gurobi} use bounded-precision floating
point arithmetics, and an accuracy of~$10^{-6}$ is a common setting.
Any solution that we miss this way, however, would be non-robust in
the sense that a slight deviation from the system dynamics would lead
to a different objective.

There are several modeling options out of which we present two.  The
first model extends the MILP for the DG-model using similar
techniques to a much larger extent.  We could solve it by the standard
MILP solver \texttt{cplex} up to $N = 4$ in less than an hour.  For $N
= 5$ the solver could not even get close to a proven optimal solution
in weeks.  The second model is a carefully engineered, more
complicated system comprising some experience in MILP techniques.
With the second model we were able to solve the benchmark problem up
to $N = 6$ with \texttt{cplex}.

We suspect that the solution for $N = 7$ and above requires
tailor-made MILP models and solution techniques.

For both our models, we assume that all voters are numbered according
to their starting opinion, i.e., $i < j$ implies $x^0_i \le x^0_j$ for
$i, j \in I$.  This saves some work since the order of voters in the
opinion space does never change, due to Lemma~\ref{lem:ordering}.

Our first, basic model uses the following variables:
\begin{itemize}
\item The control variables $x^t_0$, $t = 0, 1, \dots, N-1$ model the
  opinions published by the controller, as above.  These are the only
  independent decision variables. The remaining variables are
  dependent measurements to compute the objective function.
\item The state variables $x^t_i$, $i \in I$, $t = 0, 1, \dots, N$ measure
  the opinions of agent~$i$ in stage~$t$, as above.
\item The binary indicator variables $v^t_{i,j}$ are one if and only
  if voters $i < j$ are within distance~$\epsilon$, i.e., they
  influence each other.
\item The binary indicator variables $l^t_{i}$, $r^t_{i}$, and
  $c^t_{i}$ are one if and only if the control in stage~$t$ is to the
  left by a margin of at least~$\hat{\epsilon}$, strictly to the right by a
  margin of at least~$\hat{\epsilon}$, or inside the confidence interval of
  voter~$i$.
\item The binary indicator variables $z_i$, $i \in I$, are one if and
  only if voter~$i$ is within the conviction interval
  $[\ell, r]$ in the final stage~$N$, as above.
\item The measurement variables $\bar{x}^t_{0,i}$, $i \in I$,
  $t = 0, 1, \dots, N$, denote the contribution of the control
  opinion~$x^t_0$ in the system dynamics formula 
  in stage~$t$; this variable must
  equal $x^t_0$ if the control is in the confidence interval of
  voter~$i$; it must be zero otherwise.
\item The measurement variables $\bar{x}^t_{j,i}$, $i, j \in I$, $t =
  0, 1, \dots, N$, denote the contribution of the voter's
  opinion~$x^t_j$ in the system dynamics formula of voter~$i$ in
  stage~$t$; this variable must equal $x^t_j$ if that opinion is in
  the confidence interval of voter~$i$; it must be zero otherwise.
\item The count variables $k^t_i$, $i \in I$, $t = 0, 1, \dots, N-1$,
  denote the number of voters in the confidence interval of voter~$i$
  in stage~$t$.
\end{itemize}

With this set-up and the aforementioned use of linearized
variable-conditioned constraints, we can formulate the following basic
model.  The logical details are explained right after the presentation
of the MILP.
\begin{footnotesize}
  \begin{align}
    &\lefteqn{\max \sum_{i \in I} z_i}\label{eq:BC-basic:Obj}\\
    &\lefteqn{\text{subject to}}\notag\\
    && x^0_{i} &= x^{\text{start}}_i
    &&\text{$\forall\;i \in I$}\label{eq:BC-basic:Start},\\
    && l^t_i + r^t_i + c^t_i &= 1
    && \text{$\forall\;t = 0, 1, \dots, N-1$, $i \in I$}\label{eq:BC-basic:ControlPos},\\
    &\lefteqn{\text{vif $c^t_i = 1$ then}}\notag\\
    && x^t_0 - x^t_i & \le \epsilon\label{eq:BC-basic:Control-RightBound}\\
    && x^t_i - x^t_0 & \le \epsilon\label{eq:BC-basic:Control-LeftBound}\\
    &\lefteqn{\text{end}}&&
    && \text{$\forall\;t = 0, 1, \dots, N-1$, $i \in I$}\notag,\\
    &\lefteqn{\text{vif $r^t_i = 1$ then}}\notag\\
    && x^t_0 - x^t_i & \ge \epsilon + \hat{\epsilon}\label{eq:BC-basic:Control-RightExceed}\\
    &\lefteqn{\text{end}} &&
    && \text{$\forall\;t = 0, 1, \dots, N-1$, $i \in I$}\notag,\\
    &\lefteqn{\text{vif $l^t_i = 1$ then}}\notag\\
    && x^t_i - x^t_0 & \ge \epsilon + \hat{\epsilon}\label{eq:BC-basic:Control-LeftExceed}\\
    &\lefteqn{\text{end}} && 
    && \text{$\forall\;t = 0, 1, \dots, N-1$, $i \in I$}\notag,\\
    &\lefteqn{\text{vif $v^t_i = 1$ then}}\notag\\
    && x^t_j - x^t_i & \le \epsilon\label{eq:BC-basic:Voter-Bound}\\
    &\lefteqn{\text{else}} &&\notag\\
    && x^t_j - x^t_i & \ge \epsilon + \hat{\epsilon}\label{eq:BC-basic:Voter-Exceed}\\
    &\lefteqn{\text{end}} &&
    && \text{$\forall\;t = 0, 1, \dots, N-1$, $i, j \in I\colon i < j$}\notag,\\
    && k^t_i &= \sum_{j \in I\setminus\{i\}} v^t_{\min(i, j), \max(i, j)} + 1 + c^t_i
    && \text{$\forall\;t = 0, 1, \dots, N-1$, $i \in I$}\label{eq:BC-basic:BC-Count},\\
    &\lefteqn{\text{vif $c^t_i = 1$ then}}\notag\\
    && \bar{x}^t_{0, i} &= x^t_0\label{eq:BC-basic:Control-Contribution}\\
    &\lefteqn{\text{else}}\notag\\
    && \bar{x}^t_{0, i} &= 0\label{eq:BC-basic:Control-NoContribution}\\
    &\lefteqn{\text{end}} &&
    && \text{$\forall\;t = 0, 1, \dots, N-1$, $i \in I$}\notag,\\
    &\lefteqn{\text{vif $v^t_{\min(i,j), \max(i,j)} = 1$ then}}\notag,\\
    && \bar{x}^t_{j, i} &= x^t_j\label{eq:BC-basic:Voter-Contribution}\\
    &\lefteqn{\text{else}}\notag\\
    && \bar{x}^t_{j, i} &= 0\label{eq:BC-basic:Voter-NoContribution}\\
    &\lefteqn{\text{end}} &&
    && \text{$\forall\;t = 0, 1, \dots, N-1$, $i, j \in I\colon i \neq j$}\notag,\\
    &\lefteqn{\text{vif $k^t_i = k$ then}}\notag\\
    && x^{t+1}_i &= \frac{1}{k} \Bigl(\sum_{j \in I\setminus\{i\}} \bar{x}^{t}_{j, i} +
    x^{t}_i + \bar{x}^{t}_{0, i}\Bigr)\label{eq:BC-basic:Dynamics}\\
    &\lefteqn{\text{end}} &&
    && \text{$\forall\;k = 1, 2 \dots, \abs{I} + 1$, $t = 0, 1, \dots, N-1$, $i \in I$}\notag,\\
    &\lefteqn{\text{vif $z_i = 1$ then}}\notag\\
    && x^N_i & \ge \ell\label{eq:BC-basic:Convinced-LeftBound}\\
    && x^N_i & \le r\label{eq:BC-basic:Convinced-RightBound}\\
    &\lefteqn{\text{end}} &&
    && \text{$\forall\;i \in I$}\notag\\
    && v^t_{i,j}, l^t_i, r^t_i, c^t_i, z_i &\in \{0, 1\} 
    &&\text{$\forall\;t = 0, 1, \dots, N-1$, $i, j \in I\colon i <
      j$}\label{eq:BC-basic:Binary},\\
    && k^t_{i} &\in \mathbb{N} 
    &&\text{$\forall\;t = 0, 1, \dots, N-1$, $i \in I$}\label{eq:BC-basic:Integral},\\
    && x^t_{i} &\in [0, 1] 
    &&\text{$\forall\;t = 0, 1, \dots, N$, $i \in I \cup \{0\}$}.\label{eq:BC-basic:Continuous}.
  \end{align}
\end{footnotesize}
The objective function \eqref{eq:BC-basic:Obj} counts the number of
voters in the conviction interval in stage~$N$.  Restriction
\eqref{eq:BC-basic:Start} sets the positions of the opinions in
stage~$0$ to the given start
values. Constraint~\eqref{eq:BC-basic:ControlPos} demands (together
with the fact that all involved variables are binary) that exactly one
of the variables $l^t_i, r^t_i, c^t_i$ must be one.  The meaning is
that the control is either strictly to the left, to the right, or
inside the confidence interval of voter~$i$ in each stage~$t$.  With
restrictions \eqref{eq:BC-basic:Control-RightBound} and
\eqref{eq:BC-basic:Control-LeftBound} we request that whenever $c^t_i
= 1$ the distance between the control and voter~$i$ is no more
than~$\epsilon$ so that the control is really inside $i$'s confidence
interval.  In contrast to this, inequalities
\eqref{eq:BC-basic:Control-RightExceed}
and~\eqref{eq:BC-basic:Control-LeftExceed} make sure that whenever
$r^t_i = 1$ resp.{} $l^t_i = 1$ the control must be to the right
resp.{} to the left with a distance of at least $\epsilon +
\hat{\epsilon}$ from voter~$i$ so that the control is really outside
the $i$'s confidence interval.
Restrictions~\eqref{eq:BC-basic:Voter-Bound} and
\eqref{eq:BC-basic:Voter-Exceed} make sure in a similar way hat the
value $v^t_{i, j}$ correctly indicates whether or not $i$ and~$j$ are
in each others' confidence intervals.  The case distinction between a
large distance to the left or to the right is unnecessary because of
the order of all voters' opinions, reflected by the indices, stays
fixed throughout the process.
Constraints~\eqref{eq:BC-basic:BC-Count} sets $k^t_i$ to the number of
opinions in the confidence interval of voter~$i$.  Constraints
\eqref{eq:BC-basic:Control-Contribution} and
\eqref{eq:BC-basic:Control-NoContribution} compute how much the
control's opinion contributes to the next opinion of voter~$i$.  This
is either the control's opinion in case $c^t_i = 1$ or zero in case
$c^t_i = 0$.  Similarly, constraints
\eqref{eq:BC-basic:Voter-Contribution}
and~\eqref{eq:BC-basic:Voter-NoContribution} compute the contribution
of voter~$j$ to the next opinion of voter~$i$ depending on the value
of~$v^t_{\min(i, j),\max(i, j)}$.  Depending on how many opinions are
in the confidence interval of voter~$i$, we can now compute its next
opinion by restriction~\eqref{eq:BC-basic:Dynamics}.
Constraints~\eqref{eq:BC-basic:Convinced-LeftBound}
and~\eqref{eq:BC-basic:Convinced-RightBound} make sure \rev{the}
classification in variable $z_i$ of being convinced is consistent with
the distance of $i$'s opinion to the conviction interval.

Our second, more sophisticated model uses the following variables:
\begin{itemize}
\item The control variables $x^t_0$, $t = 0, 1, \dots, N-1$ are as above.
\item Similarly, the state variables $x^t_i$, $i \in I$, $t = 0, 1, \dots, N$
  are as above.
\item For $j_{\min}, j_{\max} \in I$ and $c_l, c_r \in \{0,1\}$, we
  introduce variables
  $\varconfidence{t}{i}{j_{\min}}{j_{\max}}{c_l}{c_r}$ where
  $\varconfidence{t}{i}{j_{\min}}{j_{\max}}{c_l}{c_r} = 1$ if and only
  if the following holds: voter~$j_{\min}$ is the minimal index of a
  voter in the confidence interval of~$i$, voter~$j_{\max}$ is the
  maximal index of a voter in the confidence interval of~$i$,
  Index~$c_l = 1$ if and only if $x^t_0 \ge x^t_i - \epsilon$ (i.e.,
  the control is not to the left of the confidence interval of
  voter~$i$), and Index~$c_r = 1$ if and only if $x^t_0 \le x^t_i +
  \epsilon$ (i.e., the control is not to the right of the confidence
  interval of voter~$i$).  In particular, all variables
  $\varconfidence{t}{i}{j_{\min}}{j_{\max}}{0}{0}$ must be zero.  The
  motivation for these variables is that they are indicating the
  unique \emph{combinatorial confidence configuration}
  $\confconfidence{j_{\min}}{j_{\max}}{c_l}{c_r}$ of a voter: If
  $\varconfidence{t}{i}{j_{\min}}{j_{\max}}{c_l}{c_r} = 1$ then we
  know by Lemma~\ref{lem:ordering} that all voters $j
  \in I$ with $j_{\min} \le j \le j_{\max}$ influence~$i$ and that the
  current control influences~$i$ if and only if $l = r = 1$.  In MILP
  language, these variables are \emph{assignment variables} that
  assign to each voter a unique combinatorial confidence
  configuration.
\item For $j_{\min}, j_{\max} \in I$, we introduce variables
  $\varconviction{j_{\min}}{j_{\max}}$ where
  $\varconviction{j_{\min}}{j_{\max}} = 1$ if and only if the following holds:
  $j_{\min}$ is the minimal index of a voter in the conviction interval
  in stage~$N$, and $j_{\max}$ is the maximal index of a voter in the
  conviction interval in stage~$N$.  The motivation for these variables is
  that they are indicating the unique \emph{combinatorial conviction
    configuration} $\confconviction{j_{\min}}{j_{\max}}$ in the final stage:
  If $\varconviction{j_{\min}}{j_{\max}} = 1$ then the number of convinced
  voters in stage~$N$ is simply $j_{\max} - j_{\min} + 1$.
\end{itemize}
With the variables above, a logically consistent model can be
formulated, which can solve the benchmark instance up to $N = 5$.
Some additional engineering effort was required in order to help
\texttt{cplex} \cite{cplex:2014} to obtain the optimal value for $N =
6$ as well.  For this, we need the following auxiliary variables.
\begin{itemize}
\item For each voter~$i \in I$ and each stage~$t = 1, \dots, N$ we
  introduce measurement variables $\vardistleft{t}{i}$ and
  $\vardistright{t}{i}$ denoting the left and right distances of
  voter~$i$ to the conviction interval $[\ell, r]$.  The motivation
  for these variables is that they provide a continuous measurement
  for how close we are to convince more voters in stage~$t+1$.  Thus,
  with these variables we can perturb the objective function to reduce
  the dual degeneracy of the model, i.e., solutions with identical
  original objective value up to stage~$t$ have distinct perturbed
  objective values, hinting at which solution has better chances to
  improve in the later stages.
\item For $i, j \in I$ with $i < j$ and $t = 0, 1, \dots, N$ we
  introduce binary indicator variables $\varvoterconf{t}{i}{j}$ with the
  following meaning: $\varvoterconf {t}{i}{j} = 1$ if and only if in
  stage~$t$ the confidence interval of voter~$j$ contains voter~$i$.
  This is the case if and only if in stage~$t$ the confidence interval
  of voter~$i$ contains voter~$j$.  The motivation for these variables
  is, first, to transfer the above symmetry relation to a relation
  among combinatorial confidence configurations and, second, that
  branching on these new additional variables leads to more balanced
  subproblems than branching on the variables for the combinatorial
  confidence configurations.
\item In the same spirit, we introduce for $i \in I$ and $t = 0, 1,
  \dots, N$ binary indicator variables $\varcontrolconf{t}{i}$ with
  the following meaning: $\varcontrolconf{t}{i} = 1$ if and only if in
  stage~$t$ the control is in the confidence interval of voter~$i$.
  The motivation is again that a more balanced branching is possible.
\end{itemize}

The resulting model, presenting all variable-conditioned constraints
literally as above, reads as follows.  Again, detailed explanations
follow the presentation of the model.
\begin{footnotesize}
  \begin{align}
    &\lefteqn{\max \sum_{(j_{\min} \le j_{\max})}
      (j_{\max} - j_{\min} + 1) \varconviction{j_{\min}}{j_{\max}}}\label{eq:BC-advanced:Obj}\\
    &\lefteqn{
      {} + 1 
      - 
      \frac{1}{N} \cdot   
      \sum_{t = 1}^N 
      \frac{1}{\ell} \cdot  
      \frac{1}{n} \cdot 
      \sum_{i \in I} \vardistleft{t}{i}
      -
      \frac{1}{N} \cdot   
      \sum_{t = 1}^N 
      \frac{1}{1 - r} \cdot
      \frac{1}{n} \cdot 
      \sum_{i \in I} \vardistright{t}{i}
    }
    \label{eq:BC-advanced:ObjPerturbation}\\
    &\text{subject to} \notag\\
    && x^0_{i} &= x^{\text{start}}_i
    &&\text{$\forall\;i \in I$}\label{eq:BC-advanced:Start},\\
    &&\sum_{\substack{j_{\min} \le i \le j_{\max}\\c_l, c_r \in \{0, 1\}}} 
    \varconfidence{t}{i}{j_{\min}}{j_{\max}}{c_l}{c_r} &= 1
    && \text{$\forall\;t = 0, 1, \dots, N-1$},\notag\\*
    &&&&&\text{$i \in I$}\label{eq:BC-advanced:ConfidenceAssignment},\\
    &\lefteqn{\text{vif 
        $\sum_{\substack{j_{\max} \ge i\\c_l, c_r \in \{0, 1\}}} 
        \varconfidence{t}{i}{j_{\min}}{j_{\max}}{c_l}{c_r} = 1$ then}}\notag\\
    &&x^t_i - x^t_{j_{\min}} &\le\epsilon\label{eq:BC-advanced:Voter-BoundLeft}\\
    &\lefteqn{\text{end}} &&
    && \text{$\forall\;t = 0, 1, \dots, N-1$},\notag\\*
    &&&&&\text{$i \in I$},\notag\\*
    &&&&&\text{$j_{\min} \le i$}\notag,\\
    &\lefteqn{\text{vif 
        $\sum_{\substack{j_{\min} \le i\\c_l, c_r \in \{0, 1\}}} 
        \varconfidence{t}{i}{j_{\min}}{j_{\max}}{c_l}{c_r} = 1$ then}}\notag\\
    &&x^t_{j_{\max}} - x^t_i &\le\epsilon\label{eq:BC-advanced:Voter-BoundRight}\\
    &\lefteqn{\text{end}} &&
    && \text{$\forall\;t = 0, 1, \dots, N-1$},\notag\\*
    &&&&&\text{$i \in I$},\notag\\*
    &&&&&\text{$j_{\max} \ge i$}\notag,\\
    &\lefteqn{\text{vif 
        $\sum_{\substack{j_{\max} \ge i\\c_l, c_r \in \{0, 1\}}} 
        \varconfidence{t}{i}{j_{\min}}{j_{\max}}{c_l}{c_r} = 1$ then}}\notag\\
    &&x^t_i - x^t_{j_{\min} - 1} &\ge \epsilon + \hat{\epsilon}\label{eq:BC-advanced:Voter-ExceedLeft}\\
    &\lefteqn{\text{end}}
    &&
    && \text{$\forall\;t = 0, 1, \dots, N-1$},\notag\\*
    &&&&&\text{$i \in I$},\notag\\*
    &&&&&\text{$0 < j_{\min} \le i$}\notag,\\
    &\lefteqn{\text{vif 
        $\sum_{\substack{j_{\min} \le i\\c_l, c_r \in \{0, 1\}}}
        \varconfidence{t}{i}{j_{\min}}{j_{\max}}{c_l}{c_r} = 1$ then}}\notag\\
    && x^t_{j_{\max} + 1} - x^t_i &\ge \epsilon + \hat{\epsilon}\label{eq:BC-advanced:Voter-ExceedRight}\\
    &\lefteqn{\text{end}}
    &&
    && \text{$\forall\;t = 0, 1, \dots, N-1$},\notag\\*
    &&&&&\text{$i \in I$},\notag\\*
    &&&&&\text{$i \le j_{\max} < n$}\notag,\\
    &\lefteqn{\text{vif 
        $\sum_{\substack{j_{\min} \le i \le j_{\max}\\c_r \in \{0, 1\}}}
        \varconfidence{t}{i}{j_{\min}}{j_{\max}}{1}{c_r} = 1$ then}}\notag\\
    && x^t_i - x^t_0 &\le \epsilon\label{eq:BC-advanced:Control-BoundLeft}\\
    &\lefteqn{\text{end}}
    &&
    && \text{$\forall\;t = 0, 1, \dots, N-1$},\notag\\*
    &&&&&\text{$i \in I$},\notag\\
    &\lefteqn{\text{vif 
        $\sum_{\substack{j_{\min} \le i \le j_{\max}\\c_l \in \{0, 1\}}}
        \varconfidence{t}{i}{j_{\min}}{j_{\max}}{c_l}{1} = 1$ then}}\notag\\
    &&x^t_0 - x^t_i &\le \epsilon\label{eq:BC-advanced:Control-BoundRight}\\
    &\lefteqn{\text{end}}
    && &&
    \text{$\forall\;t = 0, 1, \dots, N-1$},\notag\\*
    &&&&&\text{$i \in I$},\notag\\
    &\lefteqn{\text{vif 
        $\sum_{\substack{j_{\min} \le i \le j_{\max}\\c_r \in \{0, 1\}}}
        \varconfidence{t}{i}{j_{\min}}{j_{\max}}{0}{c_r} = 1$ then}}\notag\\
    &&x^t_i - x^t_0 &\ge \epsilon + \hat{\epsilon}\label{eq:BC-advanced:Control-ExceedLeft}\\
    &\lefteqn{\text{end}}
    &&
    && \text{$\forall\;t = 0, 1, \dots, N-1$},\notag\\*
    &&&&&\text{$i \in I$},\notag\\
    &\lefteqn{\text{vif 
        $\sum_{\substack{j_{\min} \le i \le j_{\max}\\c_l \in \{0, 1\}}}
        \varconfidence{t}{i}{j_{\min}}{j_{\max}}{c_l}{0} = 1$ then}}\notag\\
    &&x^t_0  - x^t_i &\ge \epsilon + \hat{\epsilon}\label{eq:BC-advanced:Control-ExceedRight}\\
    &\lefteqn{\text{end}}
    &&
    && \text{$\forall\;t = 0, 1, \dots, N-1$},\notag\\*
    &&&&&\text{$i \in I$},\notag\\
    &&\sum_{j_{\min} \le j_{\max}} \varconviction{j_{\min}}{j_{\max}} &\le 1
    &&
    \text{$\forall\;t = 1, \dots, N$}\label{eq:BC-advanced:ConvictionAssignment},\\
    &\lefteqn{\text{vif 
        $\sum_{j_{\max} \ge j_{\min}} \varconviction{j_{\min}}{j_{\max}} = 1$ then}}\notag\\
    &&x^t_{j_{\min}} &\ge \ell\label{eq:BC-advanced:Conviction-BoundLeft}\\
    &\lefteqn{\text{end}}
    &&
    && \text{$\forall\;t = 0, 1, \dots, N-1$},\notag\\*
    &&&&&\text{$j_{\min} \in I$},\notag\\
    &\lefteqn{\text{vif 
        $\sum_{j_{\min} \le j_{\max}} \varconviction{j_{\min}}{j_{\max}} = 1$ then}}\notag\\
    &&x^t_{j_{\min}} &\le r\label{eq:BC-advanced:Conviction-BoundRight}\\
    &\lefteqn{\text{end}}
    &&
    && \text{$\forall\;t = 0, 1, \dots, N-1$},\notag\\*
    &&&&&\text{$j_{\max} \in I$},\notag\\
    &\lefteqn{\text{vif 
        $\varconfidence{t}{i}{j_{\min}}{j_{\max}}{c_l}{c_r} = 1$ then}}\notag\\
    &&x^t_i &= \sum_{j \in I: j_{\min} \le j \le j_{\max}} x^{t-1}_j +
              c_l c_r x^{t-1}_0\label{eq:BC-advanced:Dynamics}\\
    &\lefteqn{\text{end}}
    &&
    && \text{$\forall\;t = 1, \dots, N$},\notag\\*
    &&&&&\text{$i \in I$},\notag\\*
    &&&&&\text{$j_{\min}, j_{\max} \in I\colon j_{\min} \le j_{\max}$},\notag\\*
    &&&&&\text{$c_l, c_r \in \{0,1\}$}\notag,\\
    &&\vardistleft{t}{i} &\ge \ell - x^t_i
    && \text{$\forall\;t = 1, \dots, N$},\notag\\*
    &&&&&\text{$i \in I$},\label{eq:BC-advanced:DistanceLeft-LB1}\\
    &&\vardistleft{t}{i} &\ge 0
    && \text{$\forall\;t = 1, \dots, N$},\notag\\*
    &&&&&\text{$i \in I$},\label{eq:BC-advanced:DistanceLeft-LB2}\\
    &&\vardistright{t}{i} &\ge x^t_i - r
    && \text{$\forall\;t = 1, \dots, N$},\notag\\*
    &&&&&\text{$i \in I$},\label{eq:BC-advanced:DistanceRight-LB1}\\
    &&\vardistright{t}{i} &\ge 0
    && \text{$\forall\;t = 1, \dots, N$},\notag\\*
    &&&&&\text{$i \in I$},\label{eq:BC-advanced:DistanceRight-LB2}\\
    &&\vardistleft{t}{i} &\le \ell 
    && \text{$\forall\;t = 1, \dots, N$},\notag\\*
    &&&&&\text{$i \in I$},\label{eq:BC-advanced:DistanceLeft-UB}\\
    &&\vardistright{t}{i} &\le r 
    && \text{$\forall\;t = 1, \dots, N$},\notag\\*
    &&&&&\text{$i \in I$},\label{eq:BC-advanced:DistanceRight-UB}\\
    &\lefteqn{\text{vif 
        $\varconviction{j_{\min}}{j_{\max}} = 1$ then}}\notag\\
    && \vardistleft{N}{i} &\le 0\label{eq:BC-advanced:DistanceLeft-Zero}\\
    &\lefteqn{\text{end}}
    &&
    && \text{$i \in I\colon i \ge j_{\min}$},\notag\\*
    &&&&&\text{$j_{\min}, j_{\max} \in I\colon j_{\min} \le j_{\max}$},\notag\\
    &\lefteqn{\text{vif 
        $\varconviction{j_{\min}}{j_{\max}} = 1$ then}}\notag\\
    && \vardistright{N}{i} &\le 0\label{eq:BC-advanced:DistanceRight-Zero}\\
    &\lefteqn{\text{end}}
    &&
    && \text{$i \in I\colon i \le j_{\max}$},\notag\\*
    &&&&&\text{$j_{\min}, j_{\max} \in I\colon j_{\min} \le j_{\max}$},\notag\\
    && \varvoterconf{t}{i}{j} &= 
    \sum_{
      \substack{
        j_{\max} \ge j\\
        j_{\min} \le j_{\max}\\
        c_l, c_r \in \{0, 1\}\\
      }
    }
    \varconfidence{t}{i}{j_{\min}}{j_{\max}}{c_l}{c_r}
    && \text{$\forall\;t = 1, \dots, N$},\notag\\*
    &&&&&\text{$i, j \in I\colon i < j$},\label{eq:BC-advanced:SymmetryRelation1}\\
    && \varvoterconf{t}{i}{j} &= 
    \sum_{
      \substack{
        j_{\min} \le i\\
        j_{\max} \ge j_{\min}\\
        c_l, c_r \in \{0, 1\}\\
      }
    } 
    \varconfidence{t}{j}{j_{\min}}{j_{\max}}{c_l}{c_r}
    && \text{$\forall\;t = 1, \dots, N$},\notag\\*
    &&&&&\text{$i, j \in I\colon i < j$},\label{eq:BC-advanced:SymmetryRelation2}\\
    && \varcontrolconf{t}{i} &=
    \sum_{
      \substack{
        j_{\min}, j_{\max} \in I\colon\\
        j_{\min} \le j_{\max}
      } 
    } \varconfidence{t}{j}{j_{\min}}{j_{\max}}{1}{1}
    && \text{$\forall\;t = 1, \dots, N$},\notag\\*
    &&&&&\text{$i \in I$},\label{eq:BC-advanced:ControlRelation}\\
    && x^0_0 &\le \frac{1}{2}\label{eq:BC-advanced:Control-BreakSymmetry}\\
    && x^t_i - x^{t-1}_i &\le \frac{n - i+1}{n - i + 2} \epsilon
    &&  \text{$\forall\;t = 1, \dots, N$},\notag\\*
    &&&&&\text{$i \in I$},\label{eq:BC-advanced:MaxReachRight}\\
    && x^t_i - x^{t-1}_i &\ge -\frac{i}{i + 1} \epsilon
    &&  \text{$\forall\;t = 1, \dots, N$},\notag\\*
    &&&&&\text{$i \in I$},\label{eq:BC-advanced:MaxReachLeft}\\
    && x^t_{i} &\le x^t_{j}
    && \text{$\forall\;t = 1, \dots, N$},\notag\\*
    &&&&&\text{$i, j \in I\colon i < j$},\label{eq:BC-advanced:Monotonicity}\\
    &&x^t_i &\in [0,1] && \text{$\forall\;t = 1, \dots, N$},\notag\\*
    &&&&&\text{$i \in I$}\label{eq:BC-advanced:Continuous}\\
    &&\varconfidence{t}{i}{j_{\min}}{j_{\max}}{c_l}{c_r}
    &\in \{0,1\} && \text{$\forall\;t = 1, \dots, N$},\notag\\*
    &&&&&\text{$j_{\min}, i, j_{\max} \in I\colon j_{\min} \le i \le j_{\max}$},\notag\\*
    &&&&&\text{$c_l, c_r \in \{0, 1\}$}\label{eq:BC-advanced:Binary1},\\
    && \varconviction{j_{\min}}{j_{\max}}
    &\in \{0,1\} && \text{$j_{\min}, j_{\max} \in I\colon j_{\min} \le i \le j_{\max}$}.\label{eq:BC-advanced:Binary2}
  \end{align}
\end{footnotesize}
The main term of the objective \eqref{eq:BC-advanced:Obj} determines
the number of convinced voters by the help of the variable
$\varconviction{j_{\min}}{j_{\max}}$, which is one if and only if
$j_{\min}$ is the minimal index and ${j_{\max}}$ is the maximal index
of a convinced voter.  The
perturbation~\eqref{eq:BC-advanced:ObjPerturbation} adds one and
subtracts a penalty term less than one from this number.  The penalty
is essentially the normalized average distance of the non-convinced
voters to the conviction interval.  The motivation of this
perturbation is, that the standard solver, when branching on variables
with increasing stage index, has a chance to identify those partial
solutions up to a stage that have greater chances (heuristically) to
increase the number of convinced voters in future stages.  This
influences which branches are inspected first and can lead to faster
identification of good primal solutions.
Restriction~\eqref{eq:BC-advanced:Start} fixes the start values, as in
the basic model.
Constraint~\eqref{eq:BC-advanced:ConfidenceAssignment} demands that
exactly one confidence configuration is selected for each voter in
each stage.  Constraints~\eqref{eq:BC-advanced:Voter-BoundLeft}
through~\eqref{eq:BC-advanced:Control-ExceedRight} makes sure that the
selection of confidence configurations is consistent with the opinions
and their distances (in an analogous way to the basic model).
Constraint~\eqref{eq:BC-advanced:ConvictionAssignment} models the fact
that there can be at most one conviction configuration at the end.  If
none of the possible conviction configurations is selected then no
voter is convinced in the end.
Restrictions~\eqref{eq:BC-advanced:Conviction-BoundLeft} and
\eqref{eq:BC-advanced:Conviction-BoundRight} make sure that the
selected conviction configuration is consistent with the distances of
voters to the conviction interval.  The dynamics is represented by
restriction~\eqref{eq:BC-advanced:Dynamics}.  Note how much simpler
the computation of the dynamics becomes with the help of the
confidence configuration variables compared to the basic model.  So
far, the logic of bounded confidence control is complete.  The
remaining restrictions are heuristic add-ons in order to accelerate
the solutions process in a standard solver by means of the additional
variables.  Contraints~\eqref{eq:BC-advanced:DistanceLeft-LB1} through
\eqref{eq:BC-advanced:DistanceRight-Zero} impose bounds on the
distances of voters to the conviction interval.  If we put them all
together, the distance variables are urged to exactly those distances.
Constraint~\eqref{eq:BC-advanced:SymmetryRelation1} and
\eqref{eq:BC-advanced:SymmetryRelation2} make sure that the additional
variables $\varvoterconf{t}{i}{j}$ receive values that are consistent
with the selected confidence configurations: voters $i$ and~$j$
influence each other if and only if one of the confidence
configuration variables
$\varconfidence{t}{i}{j_{\min}}{j_{\max}}{c_l}{c_r}$ and
$\varconfidence{t}{j}{j_{\min}}{j_{\max}}{c_l}{c_r}$, respectively,
for configurations in which $i$ and~$j$ influence each other is one.
The sum is taken over all such configurations, thus it does not matter
which confidence configuration variable contributes the one.  Totally
analogous is the effect of
constraint~\eqref{eq:BC-advanced:ControlRelation} for the additional
variable~$\varcontrolconf{t}{i}$: it is set to one whenever one of the
confidence configuration variables of the
form~$\varconfidence{t}{j}{j_{\min}}{j_{\max}}{1}{1}$ is one.  Some
additional cutting planes are provided by
restriction~\eqref{eq:BC-advanced:Control-BreakSymmetry}, which
chooses the first control value in the left half of the opinion
space. This is possible because the benchmark problem is symmetric.
Restrictions \eqref{eq:BC-advanced:MaxReachRight}
and~\eqref{eq:BC-advanced:MaxReachLeft} pose bounds on how far an
opinion can move in just one stage.  Finally,
constraint~\eqref{eq:BC-advanced:Monotonicity} explicitly demands that
the order of opinions is consistent with the indices.  The remaining
constraints \eqref{eq:BC-advanced:Continuous}
through~\eqref{eq:BC-advanced:Binary2} specify the types of the
variables.

If one spells out all variable-conditioned constraints in linear
restrictions, then one obtains the problem class \texttt{rocII}
contained in the \texttt{MIPLIB 2010}
\cite{Koch+Achterberg:MIPLIB2010:2011} benchmark suite.  The instance
\texttt{rocII-4-11} (11 voters, 4 stages) is classified as ``easy''
whereas already \texttt{rocII-7-11} is classified as ``challenge''
(open problem).  The full benchmark problem \texttt{rocII-10-11}
(status ``challenge'') is also contained in the suite.  The
\texttt{MIPLIB 2010} suite constitutes the probably most important
test bed used by virtually all developers of standard solvers for
tuning their software products, and it may very well be that general
MILP research that is totally unrelated to opinion dynamics will lead
to the solution of some of our benchmark instances.

\section{The parameter settings for the MILP solver}
\label{sec:param-sett-milp}

\begin{table}[h]
  \centering\footnotesize\ttfamily
  \begin{tabular}{rl}
    \toprule
    simplex tolerance feasibility & 1e-09\\
    simplex tolerance optimality  & 1e-3\\
    mip strategy variableselection& 3 \textrm{(=strong branching)}\\
    mip tolerance absmipgap       & 1e-3\\
    emphasis numerical            & yes\\
    timelimit                     & 3600 \textrm{(in the respective cases)}\\
  \bottomrule    
  \end{tabular}
  \caption{The \texttt{cplex} parameter settings that were used for all computations.} 
  \label{tab:MILP-cplex-parameters}
\end{table}
Table~\ref{tab:MILP-cplex-parameters} shows the cplex parameter
setting that we used for our computations.  This is meant for possible
replication of our results.  There is no reason to believe that these
parameter values are the best possible.  They have been set based on
our general computational experience in MILP.

\end{document}